\numberwithin{equation}{section}
\theoremstyle{plain}
\newtheorem{theorem}{Theorem}[section]
\newtheorem{proposition}[theorem]{Proposition}
\newtheorem{lemma}[theorem]{Lemma}
\newtheorem{corollary}[theorem]{Corollary}
\newtheorem{remark}[theorem]{Remark}
\newtheorem{definition}[theorem]{Definition}
\renewcommand{\Re}{\mathrm{Re}\,}
\renewcommand{\Im}{\mathrm{Im}\,}
\providecommand{\vect}[2]{{\bigl[\begin{smallmatrix}#1\\#2\end{smallmatrix}\bigr]}}   
\providecommand{\sm}[4]{{\bigl[\begin{smallmatrix}#1&#2\\#3&#4\end{smallmatrix}\bigr]}}
    \newcommand{\set}[1]{{\left\{#1\right\}}}
\newcommand{\pa}[1]{{\left(#1\right)}}
\newcommand{\norm}[1]{{\left |#1\right |}}
\newcommand{\T}{\mathbb{T}}
\newcommand{\Z}{\mathbb{Z}}
\newcommand{\R}{\mathbb{R}}
\newcommand{\C}{\mathbb{C}}
\newcommand{\teta}{\theta}
\newcommand{\eps}{\varepsilon}
\renewcommand{\Re}{\operatorname{Re}}
\renewcommand{\Im}{\operatorname{Im}}
\newcommand{\na}{\widehat{n}}
\newcommand{\co}[1]{\textit{#1}}
\newcommand{\gr}[1]{\textbf{#1}}
\newcommand{\id}{\operatorname{id}}
\newcommand{\g}{\gamma}
\newcommand{\s}{{\sigma}}
\def\wc{ {}}
\def\norma#1{\left |#1\right |}
\newcommand{\N}{{\mathbb N}}
\newcommand{\cA}{{\mathcal A}}
\newcommand{\cB}{{\mathcal B}}
\newcommand{\cC}{{\mathcal C}}
\newcommand{\cF}{{\mathcal F}}
\newcommand{\cH}{{\mathcal H}}
\newcommand{\cK}{{\mathcal K}}
\newcommand{\cM}{{\mathcal M}}
\newcommand{\cR}{{\mathcal R}}
\newcommand{\td}{{\mathtt{d}}}
\newcommand{\tf}{{\mathtt{f}}}
\newcommand{\tm}{{\mathtt{m}}}
\newcommand{\tB}{{\mathtt{B}}}
\newcommand{\tC}{{\mathtt{C}}}
\newcommand{\tD}{{\mathtt{D}}}
\newcommand{\tH}{{\mathtt{H}}}
\newcommand{\tK}{{\mathtt{K}}}
\newcommand{\tN}{{\mathtt{N}}}
\newcommand{\al}{{\alpha}}
\newcommand{\bt}{{\beta}}
\newcommand{\abs}[1]{\left| #1 \right|}
\newcommand{\jap}[1]{\langle #1 \rangle}
\newcommand{\und}[1]{\underline{#1}}
\newcommand{\e}{{\varepsilon}}
\newcommand{\jml}[1]{\lfloor #1 \rfloor}
\newcommand{\meas}{{\mathtt{meas}}}
\newcommand{\tw}{{\mathtt{w}}}
\renewcommand{\th}{{\mathtt{h}}}
\newcommand{\nnorm}[1]{{\left\vert\kern-0.25ex\left\vert\kern-0.25ex\left\vert #1 
    \right\vert\kern-0.25ex\right\vert\kern-0.25ex\right\vert}}
\newcommand{\es}{e^{L_{S} }}
\newcommand{\bbt}{{\bm \bt}}
\newcommand{\SE}{\mathtt{sE}}
\newcommand{\SO}{\mathtt S}
\newcommand{\suca}{\mathtt N}
\newcommand{\ri}{r}
\newcommand{\rs}{{r^*}}
\newcommand{\rf}{{r'}}
\newcommand{\twi}{\tw}
\newcommand{\twf}{{\tw'}}
\newcommand{\imm}{{\rm{i}}}
\newcommand{\Calg}{{C_{\mathtt{alg}}(p)}}
\newcommand{\CalgM}{{C_{\mathtt{alg},\mathtt M}(p)}}
\newcommand{\CM}{{C_0}}
\newcommand{\jjap}[1]{\lfloor #1 \rfloor}
\definecolor{aqua}{RGB}{10,150,200}
\begin{document}

\title{{Sub-exponential stability for the Beam equation}}

\date{}

\author{Roberto Feola}
\address{\scriptsize{Dipartimento di Matematica e Fisica, Universit\`a degli Studi RomaTre, Largo San Leonardo Murialdo 1, 00144}}
\email{roberto.feola@uniroma3.it}

\author{Jessica Elisa Massetti}
\address{\scriptsize{Dipartimento di Matematica e Fisica, Universit\`a degli Studi RomaTre, Largo San Leonardo Murialdo 1, 00144}}
\email{jessicaelisa.massetti@uniroma3.it}


\begin{abstract}   
We consider a one-parameter family of beam equations with Hamiltonian non-linearity in one space dimension under periodic boundary conditions.
In a unified functional framework we study the 
long time evolution of  initial data in  two categories  of differentiability: (i) a subspace of 
Sobolev regularity, (ii) a subspace of infinitely many differentiable functions which is strictly contained in the Sobolev space but which strictly contains the Gevrey one.
In both cases we prove exponential type times of stability. 
The result holds for almost all mass parameters and it is obtained by combining normal form techniques
with a suitable Diophantine condition weaker than the one proposed by Bourgain.
This is the first result of this kind in Sobolev regularity for a degenerate equation, where only one parameter
is used to tune the linear frequencies of oscillations.
\end{abstract}  

  
\maketitle
\tableofcontents

\section{Introduction}

In this paper we consider  the one dimensional beam equation
\begin{equation}\label{eq:beam}
 \partial_{tt}\psi+\partial_{xxxx} \psi+\mathtt{m}\psi+f(\psi)=0\,,
\end{equation}
 where $\psi=\psi(t,x)$,  
 $x\in \mathbb{T}:=\R/2\pi\Z$ and $\mathtt{m}\in [1,2]$.
 The nonlinearity $f(\psi)$ has the form
 \begin{equation}\label{hypFF}
 f(\psi):=(\partial_{\psi}F)(\psi)
 \end{equation}
 for some function $F(y)$ which is real analytic in $y$ in a neighbourhood of $y=0$.
We shall assume that $F$ has a zero in $y=0$ and, 
by analyticity, for some $R>0$ we have
\begin{equation}\label{nonlineF}
F(y)=\sum_{d=3}^{\infty}F^{(d)}y^{d}\,, 
\quad |F|_{R}:=\sum_{d=3}^{\infty}|F^{(d)}| R^d<+\infty\,.
\end{equation}
We are interested in stability times of initial data, belonging to some 
Hilbert subspaces $E$ of $L^{2}(\T,\R^2)$. 
Let $\psi_0 := \psi(0)$ and $\psi_1:= \partial_t\psi(t)_{|t=0}$ be the initial conditions of respectively position and velocity of $\psi$ at time $t=0$,  by classical local (in time) well-posedness theory
we know that given initial data $|(\psi_0,\psi_1)|_{E}\leq \delta$,
the solution exists for a certain time $T=T(\delta)>0$ 
depending only on  $\delta$.
At least in the case $\delta\ll1$ we are interested in understanding the \emph{optimal} time of stability  of solutions , i.e. we want to give a lower bound  on $T(\delta)$ 
which is the supremum of times 
$t$ such that for any $|(\psi_0,\psi_1)|_{E}\leq \delta\ll1$
one has $(\psi(t,x), \partial_{t}\psi(t,x))\in E$ with 
$|(\psi(t,\cdot), \partial_{t}\psi(t,\cdot))|_{E}\leq 2\delta$.
We refer to this time as \emph{stability time}.
Since the nonlinearity in \eqref{eq:beam} is quadratic, local theory provides
the trivial lower bound $T(\delta)\gtrsim\delta^{-1}$.
Moreover, since we are working on a compact domain, no dispersive effects can help to control the behaviour of solutions for longer times. 
A fruitful approach, in this case, 
is to reduce the ``size'' of the non linearity through a convenient normal form analysis.

In this line of thoughts, a fundamental feature of equation \eqref{eq:beam} is that, under a convenient variables' change, we can write it as a Hamiltonian system whose corresponding   Hamiltonian has an elliptic fixed point at the origin. 
Passing to the Fourier side and in appropriate elliptic coordinates $u_j $ the Hamiltonian has the form 
\begin{equation}\label{marconi1}
H = \sum_{j\in \Z} \omega_j |u_{j}|^{2}+O(u^3)\,,\qquad 
\omega_{j}:=\omega_{j}(\mathtt{m})=\sqrt{j^4+\mathtt{m}}\,,
\end{equation}
where $O(u^3)$ denotes a non linearity with a zero at the origin of order at least $3$.
In this Hamiltonian view point,  through suitable symplectic change of coordinates, we shall pull 
 the Hamiltonian $H$ back to 
 \emph{Birkhoff Normal Form} (BNF)
\[
\widetilde{H}=\sum_{j\in \Z} \omega_j |u_{j}|^{2}+\mathfrak{Z}+\mathfrak{R}\,,
\] 
where $\mathfrak{Z}$ depends only on the ``actions''  $|u_j|^2$ (and does not affect the dynamics)
while $\mathfrak{R}$ has an \emph{high} degree of homogeneity $\sim O(|u|^{\suca+2})$ 
for some natural $\suca\gg1$. 
{Then, the natural time of stability of the flow of $\widetilde{H}$ becomes $T(\delta)\sim O(\delta^{-\suca})$.}\\
The crucial difficulties in this approach
 regard the regularity 
of the phase space
of initial data and interactions among linear frequencies of oscillations $\omega_j$.

The problem of \emph{long time stability} has been widely studied 
in particular for Sobolev initial data.
For instance, we mention the seminal works \cite{Bambusi:2003} for the Klein-Gordon equation, and Bambusi-Gr\'ebert \cite{Bambusi-Grebert:2006}, where
polynomial bounds for a wide class of {\it tame-modulus} PDEs are proved. More precisely, it is shown that for any $\suca\gg 1$ there exists $p(\suca) $ (tending to infinity as $\suca\to \infty$) such that for all $p\ge p(\suca)$ and all $\delta-$small initial data in $H^p$ one has 
$T\ge C(\suca,p)\delta^{-\suca}$, provided $\delta<\delta_0(\suca,p)$.
  Similar results were also proved for the Klein-Gordon equation on tori and Zoll manifolds in \cite{DS0},\cite{DS},\cite{BDGS}. More recently, similar results have been obtained also for nonlinearities containing derivatives, see \cite{Yuan-Zhang},\cite{Delort-2009},\cite{Delort-2015},\cite{Berti-Delort},\cite{FI}. 
 The above mentioned results are deeply based on 
 the requirement that $\mathtt{N}$-waves interactions are \emph{non-resonant}.
 In other words one should impose some  \emph{diophantine conditions}
 on the vector $\omega=(\omega_{j})_{j\in\Z}$ of linear frequencies of oscillations
 in order to ensure suitable lower bounds on the quantity 
 $\omega\cdot\ell$ with $\ell\in \Z^{\Z}$, 
 $|\ell|\leq \mathtt{N}$. Such quantities arise as the eigenvalues  of an appropriate linear operator 
 that must be inverted at each step of the Birkhoff procedue.
 These arithmetic conditions are typically achieved by exploiting the presence of some 
 ``parameter'' which modulates the linear frequencies, either ``internal", such as the \emph{mass} 
 parameter $\mathtt{m}$ for the Klein-Gordon or Beam equations (see \eqref{marconi1}),
 the capillarity of the fluid in the case of water waves, or ``external", as convolutions and multiplicative potentials for the Schr\"odinger equation for example. 
  However, there are cases in which it is not possible to get
such a lower bound at any order $|\ell|\le \mathtt{\suca}$ for any $\suca\in\N$,
 for instance equations posed in high dimensional \emph{generic} tori. Nevertheless
 normal form approach has been successfully applied also to obtain   ``partial results'', i.e.
 time of stability $T(\delta)\sim O(\delta^{-\bar\suca})$ for some \emph{fixed} $\bar\suca\geq1$.
 We quote for instance
 \cite{Delort:2009vn}, \cite{FGI20} on the Klein-Gordon equation, \cite{Imekraz2016}, \cite{BFGI2021}
 on the Beam equation in high space dimension,
 \cite{FM2022} on the Schr\"odinger on generic tori, 
 \cite{HIT2016}, \cite{IT2017},
 \cite{ID2019}, \cite{BFP2022}, \cite{BFF2021} on the water waves equation
 (see also \cite{FIM2022} on a different fluid model).
  
 All the results mentioned above regard polynomial stability times in Sobolev spaces. 
Passing from polynomial estimates to exponential-type ones 
is not trivial and it is related to the regularity of initial data.  
Faou and Gr\'ebert  in \cite{Faou-Grebert:2013} made a first step forward in this direction, 
by considering the case of analytic initial data,
proving super-exponential bounds of the form $T\ge e^{\ln(\frac1\delta)^{1+b}}$,
$b>0$, 
for classes of NLS equations in $\T^d$. See also\cite{Cong}. \\
The ``time-regularity" connection emerges also in finite dimension, where long time behavior of initial data is carried out through Nekoroshev theory, which gives information over exponentially or superexponentionally long times in the analytic category (see \cite{Nekhoroshev_1977,Lochak_1992,Bounemoura_Fayad_Niederman_2020,Guzzo_Chierchia_Benettin_2016}). In contrast, polynomial stability times are proven in finite differentiability settings, see \cite{Bounemoura_2011} for $C^k, k\in\N$ quasi-convex Hamiltonians and the recent \cite{BMM:hal}, where optimal polynomial stability times are proved for the H\"older steep ones. Also in finite dimension, a sharp  BNF theory  can be constructed near elliptic equilibria, to get exponential or super-exponential times of stability nearby (see \cite{Bounemoura_Fayad_Niederman_2020_2} and references therein).

For the 1-d NLS with convolution potential, a recent achievement is represented by \cite{BMP:CMP}
(see also \cite{BMP:linceiStab}), 
where Biasco, Massetti, and Procesi prove exponential-type times of stability, \emph{both} in Sobolev \emph{and} Gevrey category, by introducing a suitable functional setting allowing the optimality of time also in finite regularity spaces.  It is worth to mentioning that a key ingredient for this result comes from their Diophantine condition, firstly introduced by Bourgain in \cite{Bourgain:2005}, that allows a control on small divisors which is \emph{uniform} w.r.t. the dimension of the support of the frequencies. The presence of a convolution potential $V\ast$, which provides an infinite sequence of parameters $(V_j)_{j\in\Z}\in\ell^\infty$ to modulate the frequencies, plays a fundamental role in guaranteeing such arithmetic conditions, and this translates in optimal lower bounds in the divisors, independently of the iterative step.
Afterwards, the flexibility of the functional frame of \cite{BMP:CMP}, developed for the stability study, 
turns out to be 
 pivotal  in the study of existence of almost-periodic tori for the same model of NLS \cite{BMP:AHP} 
in Gevrey regularity. We refer also to the recent result \cite{BMP:weak} 
concerning \emph{weak} and \emph{Sobolev} almost periodic solutions
(see also \cite{BMP:linceiAlmost} for a simple case study).

Following this line of thoughts, in this paper we shall investigate whether it is possible  
to adapt the functional setting introduced in \cite{BMP:CMP} 
to the degenerate case of equation \eqref{eq:beam}, where \textit{only one} physical parameter, 
the mass $\tm$, has to be used for frequency's modulation. 

The underlying motivation is to construct a \emph{degenerate KAM theory}
(in the sense of \cite{Russmann01})
for infinite dimensional invariant tori on a model like equation \eqref{eq:beam}.
A milestone step in this direction is to 
understand precisely the type of diophantine conditions
 one is able to impose by moving just $\tm$ in the frequencies \eqref{marconi1}. 
 
 \medskip
Let us be more precise and introduce our main results.
We introduce here 
the functions spaces we are working on:
\begin{equation}\label{normnorma}
\begin{aligned}
H^{s,p}:=\Big\{
\psi(x)&=\sum_{j\in\Z}\psi_j e^{{\rm i} jx}\in L^{2}(\T,\C)\; :\; 
\|\psi\|_{s,p}^2:=
\sum_{j\in\Z}|\psi_j|^2\lfloor j\rfloor^{2p} e^{2s\lambda(j)}<+\infty
\Big\}\,,
\end{aligned}
\end{equation}
for $s\geq 0$, $p>1/2$ where the weight-function $\lambda:\R\to\R^+$ is defined as\footnote{note that the weight is sub-linear in the sense $\lambda(y_1 + y_2) \le \lambda(y_1) + \lambda(y_2)$ for any $y_1,y_2\in\R$.}
\begin{equation}\label{es:fgrowth}
\begin{aligned}
\lambda(y)&:= (\ln(2+\langle y\rangle))^\mathtt{q}\,, 
\quad 
1< \mathtt{q} \leq 2\,,
\\
\langle j\rangle&:=\max\{1,|j|\}\,,
\qquad 
\lfloor j\rfloor:={\rm max}\{2,|j|\}\,,\quad j\in \Z\,.
\end{aligned}
\end{equation}

Using the Fourier representation we can identify
\begin{equation}\label{Foucoeff}
 \psi(x)=\sum_{j\in\Z}\psi_je^{{\rm i}j x}\in L^{2}(\T;\C)\,\qquad 
\psi_j=\frac{1}{2\pi}\int_{\T}\psi(x)e^{-{\rm i}jx}{\rm d}x
\end{equation}
with its Fourier coefficients, i.e.\footnote{We denote by $\ell^2(\R)$ 
the subspace of $\ell^{2}(\C)$ made of sequences  $(\psi_j)_{j\in\Z}$ 
such that $\psi_{j}=\overline{\psi}_{-j}$.}
\begin{equation}\label{identif1}
L^{2}(\T,\C)\ni \psi(x) \mapsto \psi=(\psi_j)_{j\in\Z}\in \ell^{2}(\C)\,.
\end{equation}
We are interested  in understanding the \emph{stability times} 
of initial data $(\psi_0,\psi_1)$ belonging to some
subspace of $H^{2}(\T,\R)\times L^{2}(\T,\R)$, which are small w.r.t. an appropriate norm. 

\medskip

\noindent
{\begin{remark}
Note that, for $s=0$,
the space $H^{0,p}$ is the standard Sobolev space $H^{p}(\T,\C)$. 
While for $s>0$, 
the weight
$\lambda$ in \eqref{es:fgrowth} being logarithmic, 
the space $H^{s,p}$ is strictly larger than the space of \emph{Gevrey} functions. 
However, we are able to guarantee a lifespan for the solutions 
which is sub-exponential
(but super-polynomial) in $1/\delta$, where $\delta$ is the size of initial conditions, both in the purely Sobolev and sub-exponential categories $H^p$ and $H^{s,p}$ respectively, see Corollary \ref{coroOttimo} and Theorem \ref{main:subexp}.
\end{remark}}

\medskip

\noindent
In the following  we denote by  $\meas: [1,2] \to \R^+$  the Lebesgue probability measure.

\begin{theorem}{\bf (Sobolev stability).}\label{main:sobol}
Let $s=0$, 
$p>1+2^6(36)^2$ and fix any $0<\gamma<1$. 
There is a positive measure set $\mathfrak{M}_{\gamma}\subset[1,2]$
with $\meas([1,2]\setminus\mathfrak{M}_{\gamma})=O(\gamma)$, 
 an absolute constant $\mathtt{c}>0$ 
such that for any $\mathtt{m}\in \mathfrak{M}$
the following holds.
For any 
\begin{equation}\label{smalldelta1Sob}
0<\delta\leq\delta_{\SO}\gamma^{\mathtt{c}p}\,,
\qquad \delta_{\SO}:=\frac{R}{2^{5}|F|_{R}}\,,
\end{equation}
and any initial datum {$(\psi_0,\psi_1)\in H^{0,p+1}\times H^{0,p-1}$}
satisfying 
\begin{equation}\label{main:smallcondSob}
\|\psi_0\|_{0,p+1}+\|\psi_1\|_{0,p-1}\leq \frac{\delta}{4}\,,
\end{equation}
the solution $(\psi(t), \partial_t\psi(t))$ of \eqref{eq:beam} with 
$(\psi(0), \partial_t\psi(0))=(\psi_0,\psi_1)$ exists and satisfies
\begin{equation}\label{boundsol1Sob}
\|\psi(t)\|_{0,p+1}+\|\partial_t\psi(t)\|_{0,p-1}\leq 8\delta\,, \qquad \forall\;  |t|\leq T_0\,,
\end{equation}
with 
\begin{equation}\label{longtime1Sob}
T_0\geq\frac{R\gamma^{\mathtt{c}p^2}}{2 |F|_{R}\delta}
\left(\frac{\delta_{\SO}}{\delta}\right)^{\frac{1}{\mathtt{c}}(p-1)^{1/3}}
\,.
\end{equation}
\end{theorem}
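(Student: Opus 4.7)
The plan is a Birkhoff normal form (BNF) reduction run to an optimal order $\suca=\suca(p)$, tailored to the fact that only the scalar mass parameter $\tm$ is available to modulate the linear frequencies. First I would recast \eqref{eq:beam}, as already sketched around \eqref{marconi1}, as a Hamiltonian system on $\ell^{2}(\C)$ of the form $H=\sum_{j}\omega_{j}|u_{j}|^{2}+P(u)$ with $\omega_{j}(\tm)=\sqrt{j^{4}+\tm}$ and $P$ analytic of order at least $3$ whose Taylor coefficients are controlled by $|F|_{R}$. The norm $\|\cdot\|_{0,p}$ of \eqref{normnorma} behaves tamely under products and Poisson brackets of polynomial Hamiltonians, which is what permits the iteration to run in Sobolev class.

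Next I would isolate the arithmetic heart of the argument. Since a single real parameter must modulate an infinite family of frequencies, the Bourgain condition of \cite{Bourgain:2005} is out of reach; I would instead impose a weaker Diophantine condition of the shape
\[
|\omega(\tm)\cdot\ell|\ge \frac{\gamma}{\mathfrak{D}(\ell)},\qquad \ell\in\Z^{(\Z)}\setminus\{\mathrm{trivial\ resonances}\},
\]
where the denominator $\mathfrak{D}(\ell)$ grows polynomially in $|\ell|_{1}$ and in the largest index in $\mathrm{supp}\,\ell$, but is still compatible with a summable union bound over $\ell$. Using the real-analytic dependence of $\omega_{j}$ on $\tm$ and a R\"ussmann-type transversality argument applied to the one-parameter curve $\tm\mapsto(\omega_{j}(\tm))_{j}$, a standard excision delivers the set $\mathfrak{M}_{\gamma}$ with complementary measure $O(\gamma)$. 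With this condition in hand the BNF iteration is driven as usual: at step $n$ solve a homological equation $\{\chi^{(n)},H_{0}\}=Z^{(n)}-P^{(n)}_{\mathrm{eff}}$, invert the small divisors through the bound above, and after $\suca$ steps arrive at
\[
H^{(\suca)}=\sum_{j}\omega_{j}|u_{j}|^{2}+Z^{(\le\suca)}+R^{(\ge\suca+3)},
\]
with $Z^{(\le\suca)}$ depending only on the actions $|u_{j}|^{2}$ and $R^{(\ge\suca+3)}$ of high degree. A classical bootstrap on $\|\psi\|_{0,p+1}+\|\partial_{t}\psi\|_{0,p-1}$ then propagates smallness over a time $\sim\delta^{-(1+\suca)}$, and the precise numerology of \eqref{longtime1Sob} is recovered by balancing $\suca$ against the Diophantine losses -- which scale as $\gamma^{-c\suca}$ together with combinatorial factors of type $\suca^{\alpha\suca}$ -- forcing the exponent to the cubic-root shape $(p-1)^{1/3}$.

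The hard part will be the two-sided squeeze in the small divisor analysis: the Diophantine condition must be weak enough that the excised set of $\tm$'s is summably of measure $O(\gamma)$, yet strong enough that the $n$-th homological solution loses at most like $\suca^{\alpha\suca}$ in the relevant norm, otherwise even the sub-exponential (in $1/\delta$) lifetime \eqref{longtime1Sob} would collapse. The single-parameter degeneracy is precisely what pins down both the form of $\mathfrak{D}(\ell)$ and the cubic-root exponent, strictly worse than the linear-in-$p$ result of \cite{BMP:CMP} where the convolution potential supplies infinitely many independent modulation parameters; bridging this gap with only $\tm$ at one's disposal is the technical novelty the theorem claims.
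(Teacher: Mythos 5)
Your outline follows the paper's strategy in broad strokes (Hamiltonian reformulation, a weakened Diophantine condition adapted to the single parameter $\tm$, a R\"ussmann-type transversality/excision for the measure estimate, a BNF iteration plus a bootstrap), but two quantitative points at the heart of the theorem are missing or misidentified. First, your Diophantine condition $|\omega\cdot\ell|\ge \gamma/\mathfrak{D}(\ell)$ with a \emph{single} power of $\gamma$ cannot be imposed for almost all $\tm$: with only one parameter, the transversality argument (a Vandermonde computation on $\partial_\tm^k(\omega\cdot\ell)$, $0\le k\le \td(\ell)-1$, where $\td(\ell)$ is the number of nonzero components of $\ell$) only yields a nondegenerate derivative of order up to $\td(\ell)-1$, so the excised set for each $\ell$ has measure $\sim(\rho/\s)^{1/(\td(\ell)-1)}$. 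Summability of the union bound then forces $\rho\sim\gamma^{\td(\ell)}$ and an exponent $\tau\sim\td(\ell)^2$ in the denominator, i.e.\ losses that degrade with the dimension of the support of $\ell$; this is precisely the content of \eqref{diofSet} and the source of all subsequent numerology.

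Second, and more importantly, the exponent $(p-1)^{1/3}$ does \emph{not} arise from balancing $\delta^{-\suca}$ against $\gamma^{-c\suca}$ and combinatorial factors $\suca^{\alpha\suca}$ (that balance would give $\suca\sim\ln(1/\delta)/\ln\ln(1/\delta)$, independent of $p$, as in the analytic optimizations). It comes from a finite derivative budget: because $\tau\sim\td^2\le 36\tN^2$, inverting the adjoint action at BNF order $\tN$ in the Sobolev scale costs $\zeta\ge(36\tN)^2$ powers of $\lfloor j\rfloor$ (Proposition \ref{shulalemma}, case $\SO$), so after $\tK$ steps the total regularity loss is $\sum_{k\le\tK}(36k)^2\sim 36^2\tK^3$, and keeping the residual index $p_0>1$ forces $\tK\sim(p-1)^{1/3}$ as in \eqref{KapponeSob}. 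The $\gamma$-losses are instead absorbed into the smallness threshold $\delta\le\delta_\SO\gamma^{\mathtt{c}p}$ and the prefactor $\gamma^{\mathtt{c}p^2}$ in \eqref{longtime1Sob}. A minor further point: since $\omega_j=\omega_{-j}$, the resonant normal form is not a function of the actions alone (it contains monomials with $\al_j=\bt_{-j}$); one must check separately, as in Lemma \ref{lem:kernel}, that it still Poisson-commutes with the relevant weighted norm so that the bootstrap of Lemma \ref{tempotempo} closes.
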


%

As a consequence of the theorem above  we get the following.

\begin{corollary}{\bf (Sobolev stability: optimization).}\label{coroOttimo}
 Let  $\delta_{\SO}, \mathtt{c}>0$ 
 be the constants of Theorem \ref{main:sobol}, for any 
 $$
 \delta \leq \bar\delta:=\delta_{\SO} {\gamma^b}\,,\qquad 
\mathtt{b}:=24\mathtt{c}^{2}\big[2^6 (36)^{2}\big]^{5/3}
 $$
and any $(\psi_0,\psi_1)$
satisfying 
\begin{equation}\label{main:smallcondSobcoro}
\|\psi_0\|_{0,p+1}+\|\psi_1\|_{0,p-1}\leq \frac{\delta}{4}\,,
\qquad p=p(\delta):=
1+\left(\frac{1}{24\mathtt{c}^2\ln(1/\gamma)} \ln\big(\frac{\delta_{\SO}}{\delta}\big)\right)^{3/5}
\end{equation}
the solution $(\psi(t), \partial_t\psi(t))$ of \eqref{eq:beam} with 
$(\psi(0), \partial_t\psi(0))=(\psi_0,\psi_1)$ exists and satisfies
\begin{equation}\label{boundsol1coro}
\|\psi(t)\|_{0,p+1}+\|\partial_t\psi(t)\|_{0,p-1}\leq 8\delta\,, \qquad \forall\;  |t|\leq T_0\,,
\end{equation}
with 
\begin{equation}\label{longtime1Sobcoro}
T_0\geq\frac{R}{2 |F|_{R}\delta}
\exp\Big\{
\frac{\mathtt{c}(\ln(1/\gamma))^{-1/5})}{(24\mathtt{c}^2)^{6/5}}(\ln(\delta_{\SO}/\delta) )^{1+\frac{1}{5}}
\Big\}\,.
\end{equation}

\end{corollary}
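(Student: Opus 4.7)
The plan is to apply Theorem~\ref{main:sobol} with the regularity index $p$ chosen as a function of $\delta$ so as to maximise the bound \eqref{longtime1Sob}. The right-hand side of \eqref{longtime1Sob} contains two competing factors in $p$: the prefactor $\gamma^{\mathtt{c}p^2}$, which decays with $p$ since $0<\gamma<1$, and the factor $(\delta_{\SO}/\delta)^{(p-1)^{1/3}/\mathtt{c}}$, which grows with $p$ since $\delta<\delta_{\SO}$. Introducing the shorthand $L:=\ln(\delta_{\SO}/\delta)>0$ and $\Gamma:=\ln(1/\gamma)>0$ and taking logarithms of \eqref{longtime1Sob}, the $p$-dependent part of the exponent becomes
\[
g(p)\;:=\;-\,\mathtt{c}\Gamma\, p^2\;+\;\frac{L}{\mathtt{c}}(p-1)^{1/3}.
\]
A one-line calculus argument, solving $g'(p)=0$ and using $p\gg 1$, locates the critical point at $p-1\sim (L/(\mathtt{c}^2\Gamma))^{3/5}$. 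This motivates the explicit choice \eqref{main:smallcondSobcoro}, in which the constant $24\mathtt{c}^2$ sits slightly above the naive balance value so that the resulting coefficient in \eqref{longtime1Sobcoro} admits a clean closed form.

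Next I would check that this $p=p(\delta)$ satisfies the two smallness hypotheses of Theorem~\ref{main:sobol}. The regularity constraint $p>1+2^6(36)^2$ translates, via \eqref{main:smallcondSobcoro}, into $L>24\mathtt{c}^2[2^6(36)^2]^{5/3}\,\Gamma=\mathtt{b}\,\Gamma$, i.e.\ $\delta\leq\delta_{\SO}\gamma^{\mathtt{b}}=\bar\delta$, which is exactly the standing hypothesis of the corollary. The smallness condition \eqref{smalldelta1Sob} amounts to $L\geq\mathtt{c}\Gamma p$; since $p\sim(L/\Gamma)^{3/5}\ll L/\Gamma$ in the regime $L\gg\Gamma$, this is automatic once $\delta<\bar\delta$ (with $\mathtt{b}$ chosen large enough to absorb the remaining absolute constants, which is how the specific exponent in the definition of $\mathtt{b}$ is fixed).

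Finally, one substitutes $p(\delta)$ back into \eqref{longtime1Sob}. Using $(p-1)^{1/3}=(L/(24\mathtt{c}^2\Gamma))^{1/5}$, together with $p\leq 2(p-1)$ (valid since $p-1>2^6(36)^2\gg 1$), the two $p$-dependent exponents evaluate to
\[
\frac{L}{\mathtt{c}}(p-1)^{1/3}\;=\;\frac{L^{6/5}}{\mathtt{c}\,(24\mathtt{c}^2)^{1/5}\,\Gamma^{1/5}},\qquad \mathtt{c}\Gamma p^2\;\leq\;\frac{4\,\mathtt{c}\,L^{6/5}}{(24\mathtt{c}^2)^{6/5}\,\Gamma^{1/5}},
\]
and their difference, once collected in front of $L^{6/5}\Gamma^{-1/5}$, reproduces the exponent stated in \eqref{longtime1Sobcoro}; the prefactor $\mathtt{c}/(24\mathtt{c}^2)^{6/5}$ written there is a clean but safe underestimate of the sharp value $23\mathtt{c}/(24\mathtt{c}^2)^{6/5}$ coming from the subtraction. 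The whole argument is a routine calculus-style optimisation of Theorem~\ref{main:sobol}, so no conceptual obstacle arises; the only delicate point I expect is the careful bookkeeping of the absolute constants and the verification that every smallness threshold in \eqref{smalldelta1Sob}--\eqref{longtime1Sob} remains satisfied under the $\delta$-dependent choice $p=p(\delta)$.
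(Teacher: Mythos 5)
Your proposal is correct and follows essentially the same route as the paper: invert \eqref{main:smallcondSobcoro} to write $\delta=\delta_{\SO}\exp\{-24\mathtt{c}^2(p-1)^{5/3}\ln(1/\gamma)\}$, check that $\delta\le\bar\delta$ forces $p>1+2^6(36)^2$ and that the smallness condition \eqref{smalldelta1Sob} reduces to $24\mathtt{c}(p-1)^{5/3}\ge p$ (automatic since $p-1$ is large), then substitute into \eqref{longtime1Sob}. The only cosmetic difference is bookkeeping of the leftover constant ($24-4=20$ with your bound $p^2\le 4(p-1)^2$, versus the paper's cruder $24(p-1)^2-p^2\ge(p-1)^2$), and any value $\ge 1$ yields \eqref{longtime1Sobcoro}.
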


{Concerning initial data with sub-exponential decay, belonging to $H^{s,p}\, s>0$, we prove the following stability result. }

\begin{theorem}{\bf (Sub-exponential stability).}\label{main:subexp}
Let $p>1+1/2$, $s>0$, $1<\mathtt{q}\leq2$ and fix any $\gamma>0$. 
There is a positive measure set $\mathfrak{M}_{\gamma}\subset[1,2]$
such that $\meas([1,2]\setminus\mathfrak{M}_{\gamma})=O(\gamma)$, 
an absolute constant $\mathtt{c}>0$ 
and
constants $C_i=C_i(p,R) >0$, $i=1,2,3$,
such that for any $\mathtt{m}\in \mathfrak{M}$
the following holds.
For any
\begin{equation}\label{smalldelta0}
0<\delta\leq \delta_{\SE}:=\min\Big\{\frac{1}{C_1 |F|_{R}}
\exp\exp\pa{
-\big(\frac{\mathtt{c}}{\gamma^{4}s} \big)^{\frac{1}{\mathtt{q}-1}}
}, 
\frac{1}{C_2|F|_{R}}
\Big\}\,,
\end{equation}
and any $(\psi_0,\psi_1)\in H^{s,p+1}\times H^{s,p-1}$
satisfying 
\begin{equation}\label{main:smallcond}
\|\psi_0\|_{s,p+1}+\|\psi_1\|_{s,p-1}\leq \frac{\delta}{4}\,,
\end{equation}
the solution $(\psi(t), \partial_t\psi(t))$ of \eqref{eq:beam} with 
$(\psi(0), \partial_t\psi(0))=(\psi_0,\psi_1)$ exists and satisfies
\begin{equation}\label{boundsol1}
\|\psi(t)\|_{s,p+1}+\|\partial_t\psi(t)\|_{s,p-1}\leq 8\delta\,, \qquad \forall\;  |t|\leq T_0\,,
\end{equation}
with 
\begin{equation}\label{longtime1}
T_0\geq C_{3}\frac{\delta_{\SE}}{\delta}
\exp\pa{\frac{1}{2}\ln(\delta_{\SE}/\delta)
\Big(\gamma^4 s\mathtt{c}^{-1}\ln\ln(\delta_{\SE}/\delta)\Big)^{\frac{\mathtt{q}-1}{2}}
}\,.
\end{equation}
\end{theorem}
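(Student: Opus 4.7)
The plan is to reduce the problem to a normal form analysis of the Hamiltonian \eqref{marconi1} in a functional framework tailored to the logarithmic-weight scale $H^{s,p}$. First I would diagonalize the linear part of \eqref{eq:beam} by passing to the Fourier side and to elliptic complex coordinates $u_j$, so that the equation becomes Hamilton's equations for
$$
H(u,\bar u)=\sum_{j\in\Z}\omega_j(\tm)\,|u_j|^2 + P(u,\bar u),
$$
where $\omega_j=\sqrt{j^4+\tm}$ and $P$ is real analytic, vanishing of order $3$ at the origin. Under the identification \eqref{Foucoeff}, the norm in \eqref{normnorma} transfers to a weighted $\ell^2$-type norm on $(u_j)$, and, since the symbol $\sqrt{j^4+\tm}$ is of order $2$ and bounded below, the combined quantity $\|\psi_0\|_{s,p+1}+\|\psi_1\|_{s,p-1}$ is equivalent to a single $H^{s,p}$-type norm on the $u$-variables. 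The small-parameter hypothesis \eqref{main:smallcond} then reads as a smallness condition in this norm.

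I would next perform iteratively many steps of Birkhoff normal form: at the $k$-th step one constructs a symplectic generator $\chi_k$ solving a homological equation of the form
$$
\{N,\chi_k\}+P_k^{\mathrm{res}}=P_k,\qquad N:=\sum_j\omega_j|u_j|^2,
$$
removing the non-resonant monomials of homogeneity $k+2$. Each monomial $u^\alpha \bar u^\beta$ in $P_k$ produces, after solving the homological equation, a small divisor $\omega(\tm)\cdot(\alpha-\beta)$. Since only the mass $\tm$ modulates the $\omega_j$'s, one cannot hope for a Bourgain-type lower bound for all integer vectors $\ell\in\Z^\Z$ of bounded degree: the arithmetic input must be a weakened Bourgain-type Diophantine condition, in which $|\omega(\tm)\cdot\ell|$ is bounded below by $\gamma$ divided by a product weight over the support of $\ell$, with the weight calibrated to the sub-linear function $\lambda$ in \eqref{es:fgrowth}. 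The functional scheme of \cite{BMP:CMP} is then adapted so that the loss incurred when inverting $\{N,\cdot\}$ on $H^{s,p}$ is balanced exactly by the summability provided by the exponential weight $e^{2s\lambda(j)}$.

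The heart of the argument, and the step I expect to be the main obstacle, is the measure estimate for the set $\mathfrak{M}_\gamma$. Exploiting that $\partial_\tm\omega_j(\tm)=\tfrac{1}{2\omega_j}$ together with the asymptotics $\omega_j=j^2+\tfrac{\tm}{2j^2}+O(j^{-4})$, one shows by a R\"ussmann-type transversality argument that, for each admissible $\ell$, the set of masses $\tm\in[1,2]$ violating the chosen threshold has measure controlled by the threshold divided by a quantitative transversality constant. Summing over the combinatorially wild supports requires the weight $\lambda(y)=(\ln(2+\jap y))^{\mathtt{q}}$ with $\mathtt{q}>1$, which ensures convergence after the rescaling $\gamma\mapsto \gamma^4 s\cdots$ one sees appearing in \eqref{smalldelta0} and \eqref{longtime1}: exactly here the one-parameter degeneracy forces the Diophantine condition to be much weaker than Bourgain's and, consequently, yields a sub-exponential rather than super-exponential lifespan.

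Finally I would close by a standard bootstrap. After $\suca$ Birkhoff steps the transformed Hamiltonian reads $\widetilde{H}=N+\mathfrak{Z}+\mathfrak{R}$, with $\mathfrak{Z}$ depending only on the actions $|u_j|^2$ (hence preserving each Fourier weight in \eqref{normnorma}) and $\mathfrak{R}$ of size $\lesssim (C(\suca)\delta)^{\suca+2}$ in $H^{s,p}$; the bound \eqref{boundsol1} follows by a continuity argument as long as $|t|\,\|\mathfrak{R}\|_{s,p}\lesssim \delta$. Optimizing $\suca=\suca(\delta,s,\mathtt{q},\gamma)$ so as to minimize the lifespan, one is led to $\suca\sim (s\gamma^4 \mathtt{c}^{-1}\ln\ln(\delta_{\SE}/\delta))^{(\mathtt{q}-1)/2}\cdot \ln(\delta_{\SE}/\delta)^{-1}$-like expressions: the double logarithm stems from the logarithmic weight $\lambda$, the exponent $(\mathtt{q}-1)/2$ from the interplay between the growth of $\lambda$ and the constants accumulated along the BNF scheme, yielding precisely the sub-exponential estimate \eqref{longtime1}.
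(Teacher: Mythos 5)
Your outline follows the same route as the paper: diagonalization to complex Fourier coordinates, an iterated Birkhoff normal form in a majorant-norm framework adapted from \cite{BMP:CMP}, a weakened Diophantine condition obtained by transversality in the single parameter $\mathtt{m}$, and a final optimization of the number of steps $\suca=\tK$ against the small-divisor losses. However, two specific points in your sketch are wrong as stated and would derail the argument if carried out literally.

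First, you place the role of the weight $\lambda(y)=(\ln(2+\langle y\rangle))^{\mathtt{q}}$, $\mathtt{q}>1$, in the measure estimate, claiming it is what makes the sum over supports of $\ell$ converge. In the paper the measure estimate (Proposition \ref{prop:meas}) never sees $\lambda$: convergence of $\sum_{\ell}\meas(\cB(\ell))$ is forced by taking $\tau=\td(\ell)(\td(\ell)+2)$ in \eqref{diofSet}, so that the exponent $\tau/\td-\td-1$ in \eqref{misuraBad} is positive, and the lower bound on some $\mathtt{m}$-derivative of $\omega\cdot\ell$ comes from a Vandermonde determinant computation (Lemma \ref{lem:vander}), not just from $\partial_{\mathtt m}\omega_j=\tfrac1{2\omega_j}$ (a first-derivative bound degenerates as $\td(\ell)$ grows). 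The condition $\mathtt{q}>1$ is instead needed in the homological equation (Proposition \ref{shulalemma}): the polynomial small-divisor loss $\prod_i(1+|\ell_i|^2\langle i\rangle^2)^{\tau}$ must be absorbed by the gain $e^{-\s(\sum_i\lambda(i)(\al_i+\bt_i)-2\lambda(j))}$ from enlarging $s$ by $\s$, and this produces the double exponential $\exp\exp((\tN^2/\s)^{1/(\mathtt q-1)})$ in $J_0^{\SE}$, which is ultimately responsible for the $\ln\ln$ and the exponent $(\mathtt q-1)/2$ in \eqref{longtime1}. Without locating the $\mathtt q$-dependence there, the optimization of $\tK$ cannot be performed.

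Second, you assert that after normalization $\mathfrak{Z}$ depends only on the actions $|u_j|^2$. With only the mass as a modulating parameter one has $\omega_j=\omega_{-j}$, so all monomials with $\ell_j+\ell_{-j}=0$ for every $j$ have identically vanishing divisor and \emph{cannot} be removed; attempting to solve the homological equation for them divides by zero. The normal form therefore lies in the larger resonant class $\cK_r(\th_\tw)$ built on the set $\mathtt R$ in \eqref{resonant set}, and one must separately verify (Lemma \ref{lem:kernel}) that such Hamiltonians still Poisson-commute with the weighted norm $\norm{\tf(u)}^2_{\tw}$ because the weights are even in $j$; only then does the bootstrap via Lemma \ref{tempotempo} close. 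This is a step your proposal skips and which is genuinely needed for the degenerate beam equation.
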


\subsection*{Remarks on Theorem \ref{main:sobol}-\ref{main:subexp} and Corollary \ref{coroOttimo}}
Some remarks are in order.

\vspace{0.5em}
\begin{enumerate}[a)]
\item \emph{The lack of parameters}. {As mentioned above, in order to put the Hamiltonian associated to equation \eqref{eq:beam} in a suitable Birkhoff Normal Form, one must require arithmetic conditions on the linear frequencies $(\omega_j)_{j\in\Z}$. The Diophantine condition \`a la Bourgain reads like\footnote{Here as usual for integer vectors we denote $|\ell|:=\sum_{i\in\Z}|\ell_i|$.}
\begin{equation*}
\mathtt{D}_{\gamma,\tB}:=
\Big\{ \omega\in \R^{\Z} : |\omega\cdot\ell|\geq\prod_{n\in\mathbb{Z}} 
\frac{\gamma}{(1+|\ell_{n}|^{2}\langle n\rangle^{2})^{\tau}}\,,\; \forall \ell\in\Lambda: 0 < |\ell| < \infty \Big\}\,,
\end{equation*}
where  $\langle n\rangle:=\max\{1,|n|\}$ for any $n\in \mathbb{Z}$, $\gamma, \tau>0$,  
and $\Lambda$ is a suitable \emph{non-resonant} sub-lattice of $\Z^{\Z}$
which, in the applications, depends on the frequencies $\omega$.
In the case of $\omega_j\sim j^2 + V_j$ for an infinite sequence of $(V_j)_{j\in\Z}\in\ell^\infty$ as in the model considered in \cite{BMP:CMP}, 
it has been proved that, for a positive measure set of $V_{j}$, the frequency vector
$\omega=(j^2+V_j)_{j\in\Z}$ belongs to the Diophantine set $\tD_{\gamma,\mathtt{B}}$
with
the non-resonant sub-lattice  $\Lambda:=\Z^{\Z}\setminus\{0\}$,
 $\gamma$ and $\tau$ being fixed positive \emph{absolute constants}.
In that case, this translates into uniformity of the scheme w.r.t. the 
number of frequencies $\omega_{j}$
that are lighted up, and sharp estimates on small divisors.
In contrast, in our degenerate case where the frequencies have the form 
$\omega_{j}=\sqrt{j^{4}+\mathtt{m}}$,
the definition of the Diophantine set $\tD_{\gamma,\mathtt{B}}$ is more delicate.
The non-resonant sub-lattice $\Lambda$  is \emph{strictly} contained  in $\Z^{\Z}\setminus\{0\}$
and consists of 
$\ell\in \Z^{\Z}$ such that
the function $\omega\cdot\ell$ in \emph{not identically zero} as function of the mass $\mathtt{m}$.
We refer to Section \ref{sec:small div} for more details on this, see definition \eqref{restrizioni indici}. 
In addition to this,
the degeneracy of equation \eqref{eq:beam}, involving only one parameter for modulating \emph{any} subset of frequencies, obliges $\tau = \tau(\td(\ell))$ to strongly depend on the ``dimension'' $\td(\ell)$ of $\ell$ and $\gamma\rightsquigarrow \gamma^{\td(\ell)}$, $\td(\ell)$ being the number of non zero elements of $\ell\in\Z^\Z$ (see Section \ref{sec:small div} for the precise definitions). This of course reflects in worst estimates on divisors, 
which are not uniform in the length $|\ell|$.
Remarkably, we are able to control these 
a priori bad estimates 
by taking full advantage of the flexibility of our norms, which are extremely effective in keeping sharp track of the constants during the iteration of BNF (see in particular Section \ref{sez: belle norme} with Lemmata \ref{giggina}-\ref{crescenza}). Thus, despite the presence of only one modulation-parameter, Theorem \ref{main:sobol} entails the same qualitative estimates as the ones in \cite{BMP:CMP}. As a byproduct a final optimization step can be performed as in \cite{BMP:CMP}, through which we get optimal estimates of exponential type. This is the first result of this kind for a degenerate equation, in Sobolev regularity. }

\vspace{0.5em}
\item { Since the norm $\|\cdot\|_{0,p}$ 
is equivalent  to the norm  
$\|\cdot\|_{H^p}:=\|\cdot\|_{L^{2}}+\|\partial_x^p \cdot\|_{L^{2}}$ (with equivalence constants independent of $p$), we can reformulate
 condition \eqref{main:smallcondSob} as  
\[
\|\psi_0\|_{H^{p+1}}+\|\psi_1\|_{H^{p-1}}\leq \frac{\delta}{8}\,,
\qquad 
\|\psi_0\|_{L^{2}}+\|\psi_1\|_{L^{2}}\leq 2^{-p}\frac{\delta}{8}\,.
\]
We also remark that in \cite{Bambusi-Grebert:2006} (see for instance Proposition $1.1$ in \cite{BMP:CMP})
the required smallness condition w.r.t. the classical Sobolev norms reads 
$\|\psi_0\|_{H^{p+1}}+\|\psi_1\|_{H^{p-1}}\leq \delta\le  \delta_S p^{-3p}$, which is
more restrictive with respect to our condition  
$\delta\leq \delta_S \g^{p}$.
In other words the  condition above allows us to work on a slightly bigger ball in $H^{p}$, provided we impose 
a stronger condition on the $L^2$-norm.
However our condition is still more restrictive w.r.t. the one in Theorem $1.2$ of \cite{BMP:CMP}.
This is again a consequence of our small divisors estimates (see  item $a)$). 
}

\vspace{0.5em}
\item The function  in \eqref{main:smallcondSobcoro} defining $p=p(\delta)$ is explicitly invertible, so one can rephrase the stability time in terms of the regularity, namely, fixing $H^{0,p}$
with $p$ sufficiently large, all the  initial data in the ball of radius $\delta = \delta(p)$ remains  
in the ball of radius 
$8\delta$ for times of order 
\[
T_0\sim \exp\Big\{
\mathtt{c}(p-1)^{5/3}\big(1+(p-1)^{1/3}\big)\ln(1/\gamma) 
\Big\}\sim \left(\frac{1}{\delta(p)}\right)^{1+(p-1)^{1/3}}\,.
\]
\item We stress the fact that dealing with functions belonging to the space $H^{s,p}$ with $\exp(\ln^\mathtt{q}(\cdot))$-decay is more delicate than dealing with the classical sub-exponential Gevrey ones. In particular the monotonic character of the norm together with the estimates on the solution of the Homological equation require a more refined analysis. See Lemmata \ref{constance beam sub-immersion} and 
\ref{lem:constance2SE} for instance.  
We restrict here to this more difficult case. One can easily recover the Gevrey regularity case 
using the corresponding Lemmata in \cite{BMP:CMP} and following verbatim our proof.
In this case we expect a time of stability like \eqref{longtime1} with only one logarithm at the exponent.

\end{enumerate}

\textbf{Acknowledgments.} We thank L. Biasco and M. Procesi for enlightening  discussions 
and suggestions.  
We also thank D. Bambusi and L. Corsi for very useful comments.
The authors have been  supported by the  research project PRIN 2020XBFL ``Hamiltonian and dispersive PDEs" of the Italian Ministry of Education and Research (MIUR). J.E. Massetti also acknowledges the support of the INdAM-GNAMPA research project ``Chaotic and unstable behaviors of infinite-dimensional dynamical systems".

%
%
%
%
%
%
%

\section{Spaces of Hamiltonians and conjugacies}\label{sec:2}

\subsection{Hamiltonian structure of the beam equation}
\label{sec:hamstructurebeam}

We shall deal with the stability problem through a Birkhoff Normal Form apporach, taking advantage of the Hamiltonian structure enjoyed by the beam equation. In fact, 
by introducing the variable
$v=\partial_{t}\psi$, solutions of equation \eqref{eq:beam} 
correspond to the flow of
\begin{equation*}\label{eq:beam2}
X_\cB := 
\left\{\begin{aligned}
\partial_{t}\psi&=v 
\\ 
\partial_{t}v&=-\omega^2\psi-f(\psi)\,,
\end{aligned}\right.
\end{equation*}
where $\omega$ 
is the Fourier multiplier defined by linearity as
\begin{equation}\label{omegoneBeam}
\omega e^{{\rm i} j\cdot x}=\omega_{j} e^{{\rm i} j\cdot x}\,,
\qquad 
\omega_{j}:=\sqrt{|j|^{4}+\mathtt{m}}\,,
\qquad
\forall \,j\in \mathbb{Z}\,,\quad \mathtt{m}\in[1,2]\,.
\end{equation}
Observe that
\[
X_{\cB}\equiv J\nabla {H_{\mathbb{R}}}(\psi,v)
=
J\left(\begin{matrix}
\partial_{\psi}H_{\mathbb{R}}(\psi,v)\\
\partial_{v}H_{\mathbb{R}}(\psi,v)
\end{matrix}\right)\,,
\quad 
J=\sm{0}{1}{-1}{0}
\]
where the Hamiltonian $H_\R: H^{2}(\mathbb{T};\mathbb{R})\times L^{2}(\mathbb{T};\mathbb{R})\to \R$ is defined through
\begin{equation}\label{BeamRealHam}
 H_{\mathbb{R}}(\psi,v)=
\int_{\mathbb{T}}
\big(
\frac{1}{2}v^{2}+\frac{1}{2}(\omega^{2}\psi) \psi+F(\psi)
\big){\rm d}x\,,
\end{equation}
and
$\nabla := (\partial_\psi,\partial_v)$ denotes its $L^{2}$-gradient.
Indeed we have 
\begin{equation}\label{eq:1.14bis}
\mathrm{d}H_{\mathbb{R}}(\psi,v)\cdot W
=
\Omega_{\mathbb{R}}(X_{H_{\mathbb{R}}}(\psi,v),W)
\end{equation}
for any 
$W \in H^{2}(\mathbb{T};\mathbb{R})\times L^{2}(\mathbb{T};\mathbb{R})$, 
where $\Omega_{\mathbb{R}}$ 
is the non-degenerate symplectic form
\[
\Omega_{\mathbb{R}} := \int_{\T}d\psi \wedge dv\,dx\,,
\qquad
\Omega_{\mathbb{R}}(W_1,W_2) 
= \int_{\T}  d\psi \wedge dv (W_1,W_2)\, dx 
= \int_{\mathbb{T}}(\psi_1v_2-v_1\psi_2)dx\,,
\]
for any 
$ W_1 = (\psi_1,v_1), W_2=(\psi_2, v_2) 
\in H^{2}(\mathbb{T};\mathbb{R})\times L^{2}(\mathbb{T};\mathbb{R})$.
The Poisson brackets between two Hamiltonian 
$H_{\mathbb{R}}, G_{\mathbb{R}}: 
H^{2}(\mathbb{T};\mathbb{R})\times L^{2}(\mathbb{T};\mathbb{R})\to \mathbb{R}$
are defined in the classical manner as
\begin{equation}\label{realpoisson}
\{H_{\mathbb{R}},G_{\mathbb{R}}\}
:=\Omega_{\mathbb{R}}(X_{H_{\mathbb{R}}},X_{G_{\mathbb{R}}})\,.
\end{equation}

\noindent
Let  now
\[
\cR = 
\set{(u^+, u^-)\in H^{1}(\mathbb{T};\mathbb{\C})\times  H^{1}(\mathbb{T},\C) \, : u^- = \bar{u}^+ }\,,
\] 
and let us define  the linear isomorphism 
\[
\cC:H^{2}(\mathbb{T};\mathbb{R})\times  L^{2}(\mathbb{T};\mathbb{R})
\to 
H^{1}(\mathbb{T};\mathbb{\C})\times  H^{1}(\mathbb{T};\mathbb{\C}) \cap \cR \,,
\]

\begin{equation}\label{beam5}
\vect{\psi}{v} \mapsto \mathcal{C}\vect{\psi}{v} = \vect{u}{\bar{u}}\,,\quad \mathcal{C}:=
\frac{1}{\sqrt{2}}\left(
\begin{matrix}
\omega^{\frac{1}{2}} & {\rm i} \omega^{-\frac{1}{2}}\\
\omega^{\frac{1}{2}} & -{\rm i} \omega^{-\frac{1}{2}}
\end{matrix}
\right)\,,
\end{equation}
where $\omega$ is the Fourier multiplier defined in \eqref{omegoneBeam}.
The vector field $X_\cB$ is then pushed forward to the new Hamiltonian one 
\begin{equation}\label{eq:beamComp}
\cC_* X_{\cB} = X_H = (\dot u, \dot{\bar u}) \,,
\quad \quad 
\dot{u} 
= - {\rm i}\omega u- \frac{{\rm i} }{\sqrt 2}
\omega^{-1/2}f\left(\omega^{-1/2}\left(\frac{u+\bar u}{\sqrt 2}\right)\right) 
= - {\rm i}\partial_{\bar{u}}H(u,\bar{u})
\end{equation}
where 
$\partial_{\bar{u}}=(\partial_{\Re u}+{\rm i} \partial_{\Im u})/2$,
$\partial_{u}=(\partial_{\Re u}-{\rm i} \partial_{\Im u})/2$
and 
\begin{equation}\label{beamHam}
H(u,\bar{u})=H_{\mathbb{R}}(\mathcal{C}^{-1}\vect{u}{\bar{u}})
=
\int_{\mathbb{T}}\bar{u}\, \omega u\ \mathrm{d}x 
+\int_{\mathbb{T}} F\Big( \frac{\omega^{-1/2}(u+\bar{u})}{\sqrt{2}}\Big)\ {dx}\,.
\end{equation}
The (complex) induced $2$-form is 
\begin{equation}\label{complex form}
\Omega:= (\cC^{-1})^*\Omega_\R = \int_{\T}{\rm{i}} du\wedge d\bar u\,dx\,,
\end{equation}
which yields, for any 
$w_1 = (\xi , \bar\xi), w_2 
= (\eta, \bar\eta) \in H^{1}(\mathbb{T};\mathbb{\C})
\times  H^{1}(\mathbb{T};\mathbb{\C}) \cap \cR$,
\begin{equation}\label{symcompform}
\Omega (w_1,w_2) = \int_{\T} \xi \bar{\eta} - \bar\xi\eta \, dx\, \, \in\R
\end{equation}
and intrinsically defines the Hamiltonian through
\begin{equation}\label{diffHam}
\Omega(X_{H}(u), w) = \mathrm{d}H(u)\cdot w                                                                                                                                                                                                                                                                                                                                                                                                                                                                                                                                                                                                                                                                                                                                                                                                                                                                                                                                                                                                                                                                                                                                                                                                                                                                                                                                                                                                                                                                                                                                                                                                                                                                                                                                                                                                                                                                                                                                                                                                                                                                                                                                                                                     
\end{equation}
for any $w$.
Accordingly, we set the (complex) Poisson brackets as
\[
\set{H, G} := 
\Omega(X_H, X_G)
\]  
where  $H=H_{\mathbb{R}}\circ\mathcal{C}^{-1}$  
(resp $G=G_{\mathbb{R}}\circ\mathcal{C}^{-1}$ ) 
and $X_H = \cC_*X_{H_\R}$ (resp $X_G = \cC_*X_{G_\R}$)
which yields\footnote{
Note that the naturality of the Poisson brackets 
holds $\set{H_\R, G_\R}\circ \cC^{-1} = \set{H,G}$.
}
\begin{equation}\label{Poissonbrackets}
\begin{aligned}
\{H,G\} 
={\rm i} \int_{\mathbb{T}}
\big(\partial_{u}G\partial_{\bar{u}}H-
\partial_{\bar{u}}G\partial_{u}H\big) \mathrm{d}x\,.
\end{aligned}
\end{equation}

\subsection{Fourier's representation} 
The stability result will be a consequence of a normalization procedure 
that amounts in transforming the Hamiltonian 
into a suitable normal form (the Birkhoff normal form). 
In order to set the convenient functional setting, 
we shall rather work in the space of sequences 
that correspond to the above functional spaces, 
by systematically identifying  $ L^2(\T,\C)$ with  the Banach space 
$\cF(\ell^2(\C))$ of $2\pi$-periodic functions  (recall \eqref{Foucoeff})
$u(x) = u(x,t) = \sum_{j\in\Z} u_j e^{ijx}$ 
such that their Fourier's coefficients $(u_j)_{j\in\Z}\in \ell^2(\C)$. 

\noindent
Then,  the  Hamiltonian  in \eqref{beamHam} reads 
\begin{equation}\label{beamHamFourier}
H(u,\bar{u})= 
\sum_{j\in\Z} \omega_{j}|u_{j}|^{2} + \sum_{p=3}^{\infty}
\sum_{\substack{j_i\in\Z,\s_i\in\{\pm\} \\
\sum_{i=1}^{p}\s_i j_i=0}} 
F_{j_1,\ldots,j_p}^{\s_1\ldots\s_p}
u_{j_1}^{\s_1}\cdots u_{j_p}^{\s_p}\,=: \sum_{j\in\Z} \omega_{j}|u_{j}|^{2} + \tH_{\ge 3}\,,
\end{equation}
where we used the analyticity in the neighborhood 
of the origin of $F$ to expand 
the second integral in \eqref{beamHam}
for some coefficients 
$|F_{j_1,\ldots,j_p}^{\s_1\ldots\s_p}|\lesssim C^{p}$ for some $C>0$, and,  rearranging the sum in multi-index notation we set

$$ \tH_{\geq 3}(u,\bar u) = \sum_{\substack{\al,\bt\in\N^\Z\\ |\al| + |\bt|\ge 3 \\ \sum_j j(\al_j - \bt_j)=0}} 
H_{\al,\bt} u^{\al}{\bar{u}}^{\bt}. $$

\noindent
Accordingly, by defining 
\begin{align*}
d u_j = \frac{1}{\sqrt 2}\pa{d x_j+ \imm d y_j}\,,
\quad 
d \bar u_j = \frac{1}{\sqrt 2}\pa{d x_j- \imm d y_j}\,,
\\  
\frac{\partial}{\partial  u_j} =  
\frac{1}{\sqrt 2}\pa{\frac{\partial}{\partial x_j} 
- \imm \frac{\partial}{\partial y_j}}\,,
\quad  
\frac{\partial}{\partial  \bar u_j} =  
\frac{1}{\sqrt 2}\pa{\frac{\partial}{\partial x_j} 
+ \imm \frac{\partial}{\partial y_j}}\,,
\end{align*}
the corresponding $2$-form and Hamiltonian vector field read
\begin{equation}\label{symp form seq}
\imm \sum_{j\in\Z} du_j\wedge d\bar{u}_j\,, 
\quad \quad 
X_H^{(j)} = -\imm \frac{\partial }{\partial{\bar{u}_j}}H(u) \,.
\end{equation}
Note that when some real analytic $H$ admits 
a holomorphic extension $\widehat H$ 
on some ball of radius $r>0$ in $\ell^2(\C)$ that is  
\[ 
(u_+, u_{-})\in B_r(\ell^2(\C))\times B_r(\ell^2(\C)) \to \widehat H(u_+,u_-)\, 
:\quad \quad 
H(u)=\widehat H (u,\bar u)\,,
\]  
then it admits a Taylor expansion
\[
\widehat H(u_+,u_-)  = \sum^\ast_{\al,\bt\in\N^\Z} H_{\al,\bt}u_+^\al u_-^\bt\,,
\]
where we denote by $\sum^\ast$ the sum 
restricted to those $\al,\bt: |\al|+ |\bt|<\infty$.

\noindent
One can see that
\[
\frac{\partial}{\partial \bar u_j} H(u) 
= 
\frac{\partial \widehat H(u_+,u_-)}{\partial u_{-,j}} \Big\vert_{u_+=\bar u_-=u}\,.
\]

From now on we shall pass to the Fourier side and work on spaces of weighted sequences.

\noindent
Let  $\mathtt w=(\mathtt w_j)_{j\in\Z}$ be  the  real sequence  
(recall \eqref{es:fgrowth})
\begin{align}
&\tw=\tw(s,p):= \pa{\lfloor j\rfloor^{ p}e^{ s\lambda(j)}}_{j\in\Z}\,,
\qquad \text{($\SE$) \;\; Sub-exponential  case}\,,
\label{peso sub}
\\
&\tw=\tw(p):=\tw(p,0)=\pa{\lfloor j\rfloor^{p}}_{j\in\Z}\,,
\qquad \text{($\SO$) \;\; Sobolev case}\,,
\label{peso sob} 
\end{align}
and  let us set the 
Hilbert space
\begin{equation}\label{pistacchio}
\th_{\mathtt w}:=
\Big\{u:= \pa{u_j}_{j\in\Z}\in\ell^2(\C)\; : \; \abs{u}_{\tw}^2:= 
\sum_{j\in\Z} \mathtt w_j^2 \abs{u_j}^2 < \infty
\Big\}\,,
\end{equation} 
endowed with the scalar product
\begin{equation}\label{scalarW}
(u,v)_{\th_\tw}:=\sum_{j\in\Z} \mathtt w_j^2 u_j \bar v_j\,,\qquad u,v\in\th_{\tw}\,.
\end{equation}
Moreover, given $r>0$, we denote by $B_r(\th_{\mathtt w})$ 
the closed
ball of radius $r$ centred at the origin of $\th_{\mathtt w}$.

\vspace{0.5em}
\noindent
In the following we shall systematically identify $2\pi$-periodic 
functions with their Fourier coefficients, writing $\mathtt{h}_{\mathtt{w}(s,p)}$ instead of $H^{s,p}$.
In particular, with abuse of notation, we shall write (recall \eqref{normnorma})
\begin{align*}
 \|u\|_{s,p}&=|u|_{\tw(s,p)}\,, \qquad\qquad\quad
 \forall \, u\in H^{s,p}\simeq \mathtt{h}_{\tw(s,p)}\,, & (\SE)
\\
\|u\|_{p}&:=\|u\|_{0,p}=|u|_{\tw(p)}\,, \qquad\forall \, u\in H^{0,p}\simeq \mathtt{h}_{\tw(p)}\,\,.
&(\SO)
\end{align*}

\subsection{Spaces of Hamiltonians}
In this section we 
introduce a suitable graded Poisson algebra of  Hamiltonians
that we need in order to prove our main abstract result.

\noindent
Let $\tw$ as in \eqref{peso sub} or \eqref{peso sob} and let
$\star:\th_{\tw} \times \th_{\tw} \to \th_{\tw}$ be the convolution operation defined as
\[
\pa{f,g} \mapsto f\star g :=\Big(
\sum_{j_1+j_2=j
} 
f_{j_1}g_{j_2}\Big)_{j\in \Z}\,.
\]
The map $\star: \pa{f,g} \mapsto f\star g$ is continuous in the following sense:
\begin{lemma}\label{algebra}
For $p>1/2$ we have 
\begin{align}
\norm{f\star g}_{\tw{(p,s)}} &\le \Calg \norm{f}_{\tw{(p,s)}}\norm{g}_{\tw{(p,s)}}\,,
\quad\quad \forall f,g\in \th_{\tw(s,p)}\,,\label{stimacalgcalg}
\\	
|f\star g|_{\tw(p)}&\leq \CalgM |f|_{\tw(p)} |g|_{\tw(p)}\,,
\qquad\qquad \forall f,g\in \th_{\tw(p)}\,,\label{stimacalgcalgMM}
\end{align}
where 
\begin{equation}\label{costanti algebra}
\Calg := 8^p \Big(\sum_{i\in\Z} \jap{i}^{-p}\big)^{1/2}\,, 
\qquad \quad 
\CalgM:= \sqrt{2}\sqrt{2 + \frac{2p+1}{2p-1}}\,.
\end{equation}
\end{lemma}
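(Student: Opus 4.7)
The starting point for both inequalities is the pointwise identity
\[
|(f\star g)_j|\le \sum_{j_1+j_2=j}|f_{j_1}|\,|g_{j_2}|,
\]
so the goal is to push the weight $\tw_j$ inside the convolution sum and then split the two factors by a Young/Cauchy--Schwarz combination. For the polynomial part of the weight I would use the elementary fact that $j=j_1+j_2$ implies $|j|\le |j_1|+|j_2|$, from which one gets the two convenient majorations
\[
\lfloor j_1+j_2\rfloor\le 2\max\{\lfloor j_1\rfloor,\lfloor j_2\rfloor\},\qquad
\lfloor j_1+j_2\rfloor^p\le 2^p\bigl(\lfloor j_1\rfloor^p+\lfloor j_2\rfloor^p\bigr),
\]
the second being obtained from the first together with $\max(a,b)^p\le a^p+b^p$ for $p\ge 0$. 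For the sub-exponential weight in \eqref{peso sub} I would invoke the sub-additivity of $\lambda$ (pointed out in the footnote to \eqref{es:fgrowth}), which yields $e^{s\lambda(j_1+j_2)}\le e^{s\lambda(j_1)}e^{s\lambda(j_2)}$. Combining these two observations gives
\[
|(f\star g)_j|\,\tw_j(s,p)\le 2^p\!\!\sum_{j_1+j_2=j}\!\bigl(F_{j_1}\,|g_{j_2}|e^{s\lambda(j_2)}+|f_{j_1}|e^{s\lambda(j_1)}\,G_{j_2}\bigr),
\]
where $F_j:=|f_j|\,\tw_j(s,p)$ and $G_j:=|g_j|\,\tw_j(s,p)$.

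Rewriting $|g_{j_2}|e^{s\lambda(j_2)}=G_{j_2}/\lfloor j_2\rfloor^p$ (and analogously for $f$) exhibits the right-hand side as the sum of two convolutions of the form $F\star(\phi G)$ and $(\phi F)\star G$ with $\phi_j:=\lfloor j\rfloor^{-p}$. Young's inequality in the form $\|F\star(\phi G)\|_{\ell^2}\le\|F\|_{\ell^2}\|\phi G\|_{\ell^1}$ followed by Cauchy--Schwarz on $\|\phi G\|_{\ell^1}\le\|G\|_{\ell^2}\,\|\phi\|_{\ell^2}$ gives
\[
|f\star g|_{\tw(s,p)}\le 2^{p+1}\Bigl(\sum_{j\in\Z}\lfloor j\rfloor^{-2p}\Bigr)^{1/2}|f|_{\tw(s,p)}|g|_{\tw(s,p)},
\]
and the hypothesis $p>1/2$ guarantees convergence of the series. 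Crude monotonicity $\lfloor j\rfloor^{-2p}\le \jap{j}^{-p}$ and $2^{p+1}\le 8^p$ (valid for $p\ge 1/2$) finally produce the constant $\Calg$ in \eqref{costanti algebra}, proving \eqref{stimacalgcalg}.

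For the Sobolev estimate \eqref{stimacalgcalgMM} the strategy is identical (with $s=0$, so no exponential weight), but one has to be frugal with constants. Here I would not symmetrize upfront, but rather split the sum a priori into the two regions $\{\lfloor j_1\rfloor\ge\lfloor j_2\rfloor\}$ and its complement, bounding in each region $\lfloor j\rfloor^p\le 2^p\lfloor j_1\rfloor^p$ (resp.\ $\le 2^p\lfloor j_2\rfloor^p$). This keeps the factor in front equal to $2^p$ in each piece (not $2^{p+1}$), and after applying Cauchy--Schwarz region-wise the remaining summation to control is
\[
\sum_{j\in\Z}\jap{j}^{-2p}\le 1+2\int_0^\infty(1+x)^{-2p}dx=\frac{2p+1}{2p-1}.
\]
Together with a careful accounting of the $\lfloor\cdot\rfloor$ vs.\ $\jap{\cdot}$ discrepancy (the three indices $j\in\{-1,0,1\}$ contribute the extra ``$2$'' inside the square root) this yields exactly $\CalgM=\sqrt{2}\sqrt{2+(2p+1)/(2p-1)}$.

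The main technical subtlety, and the step I expect to require the most care, is the Sobolev bound \eqref{stimacalgcalgMM}: keeping the constant independent of $p$ (so that $\CalgM$ stays bounded as $p\to\infty$) forces one to avoid any $2^p$-type loss, which is the reason for the region-splitting argument rather than the soft Peetre-type bound used in the sub-exponential case. Once the constant in \eqref{stimacalgcalg} is allowed to grow with $p$, the argument of the first paragraph is completely routine.
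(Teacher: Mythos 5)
Your proof of \eqref{stimacalgcalg} is complete, correct, and is essentially the argument the paper outsources to Lemma 5.5 of \cite{BMP:CMP}: a Peetre-type splitting of $\lfloor j_1+j_2\rfloor^p$, sub-additivity of $\lambda$, then $\ell^2\star\ell^1\subset\ell^2$ followed by Cauchy--Schwarz. Your intermediate constant $2^{p+1}\big(\sum_j\lfloor j\rfloor^{-2p}\big)^{1/2}$ is finite for every $p>1/2$ and is dominated by $\Calg$ exactly as you indicate, so that half needs no changes.

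For \eqref{stimacalgcalgMM} there is a genuine gap in precisely the step you flag as delicate. Splitting into the regions $\lfloor j_1\rfloor\ge\lfloor j_2\rfloor$ and its complement and bounding $\lfloor j\rfloor^p\le 2^p\lfloor j_1\rfloor^p$ still leaves the factor $2^p$ in front of each piece; the region splitting by itself does \emph{not} remove the exponential loss, and any surviving $2^p$ is incompatible with the $p$-uniform constant $\CalgM$ (whose uniform boundedness in $p$ is what the paper actually uses later, e.g.\ in \eqref{cardano2}). The missing observation is that on the region $\lfloor j_1\rfloor\ge\lfloor j_2\rfloor$ the leftover weight attached to $g$ is $2^p\lfloor j_2\rfloor^{-p}=(2/\lfloor j_2\rfloor)^p\le 1$, precisely because $\lfloor\cdot\rfloor=\max\{2,|\cdot|\}\ge 2$: the $2^p$ is absorbed by the \emph{small} index, and the $\ell^2$-series you must then control is $\sum_j(2/\lfloor j\rfloor)^{2p}\le 5+\tfrac{4}{2p-1}$, not $\sum_j\jap{j}^{-2p}$. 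This absorption is the entire reason the Sobolev weight is $\lfloor j\rfloor$ rather than $\jap{j}$; with $\jap{j}$ the weight is not submultiplicative and no $p$-independent constant is available. Once this one line is inserted the argument closes, though the constant it produces is of the form $C\big(5+\tfrac{4}{2p-1}\big)^{1/2}$ and agrees with $\CalgM=\big(6+\tfrac{4}{2p-1}\big)^{1/2}$ only up to an absolute factor rather than ``exactly''; since only the uniform boundedness of $\CalgM$ in $p$ is ever used, that discrepancy is harmless.
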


\begin{proof}
The proof works verbatim as the one in Lemma 5.5 of \cite{BMP:CMP}  
where $\jap{j}^p e^{s\jap{j}^\teta +  a |j|}\rightsquigarrow \jjap{j}^p e^{s\lambda(j)}$ 
 for the $\abs{\,\cdot \,}_{p,s,0}\equiv \abs{\,\cdot\,}_{\tw_{p,s}}$ 
 norm (just noticing that $\jap{j}\le \jjap{j}$ and the sublinearity of $\lambda$).
 The case of the Sobolev norm $\|\cdot\|_{p}$ is the same.
\end{proof}

\vspace{0.5em}
\noindent
By endowing the space $\th_{\tw}$ in \eqref{pistacchio}
with the symplectic structure induced by the symplectic form 
$\Omega$ in \eqref{symp form seq}, 
we introduce the following class of Hamiltonians.

\begin{definition}{\bf (Admissible Hamiltonians).}\label{def:admiHam}
Let $r>0$ and consider a Hamiltonian
$ H : B_r(\th_{\mathtt w}) \to \R$
such that there exists a pointwise  
absolutely convergent power series expansion\footnote{As usual given 
a vector $k\in \Z^\Z$, $|k|:=\sum_{j\in\Z}|k_j|$.}
\begin{equation}\label{HamPower}
H(u)  = 
\sum_{\substack{
\al,\bt\in\N^\Z\,, \\2\leq |\al|+|\bt|<\infty} }
\!\!\!
H_{\al,\bt}u^\al \bar u^\bt\,,
\qquad
u^\al:=\prod_{j\in\Z}u_j^{\al_j}\,.
\end{equation}
We say that $H$ as in \eqref{HamPower}
is \emph{admissible} if  the following properties hold: 
\begin{enumerate}
\item \emph{Reality condition}:
\begin{equation}\label{real}
H_{\al,\bt}= \overline{ H}_{\bt,\al}\,,\qquad \forall\, \al,\bt\in\mathbb{N}^{\Z}\,;
\end{equation}
\item \emph{Momentum conservation}:	
\begin{equation}\label{momento}
H_{\al,\bt}\neq0\quad \Rightarrow\quad
\pi(\al - \bt) := \sum_{j\in\Z}j\pa{\al_j - \bt_j}= 0\,.
\end{equation}
\end{enumerate}	
\end{definition}

\noindent
Finally, given two admissible Hamiltonians $H,G$
the Poisson brackets are given by 
\begin{equation}\label{poipoisson}
\{H,G\}={\rm i}
\sum_{j\in\Z}
\big(\partial_{u_j}G\partial_{\bar{u}_j}H-
\partial_{\bar{u}_j}G\partial_{u_j}H\big)\,.
\end{equation}

\noindent
Let 
$\mathcal{A}_{r}(\th_{\mathtt w})$ be the the space of admissible
 Hamiltonians such that the \co{majorant}
\begin{equation}\label{etamag}
\und { H} (u):= 
\sum_{(\alpha,\beta)\in\cM} \abs{{H}_{\alpha,\beta}}u^{\alpha}\bar{u}^{\beta}
\end{equation} 
is point-wise  absolutely convergent on $B_r(\th_{\mathtt w})$,
where we set
\begin{equation}\label{mass-momindici}
\mathcal{M}:=
\left\{
(\alpha,\beta)\in \mathbb{N}^{\Z}\times\N^{\Z} : \pi(\al - \bt)= 0, 
|\alpha|+|\beta|<\infty
\right\}\,,
\end{equation}
and introduce the following class of Hamiltonians.
 
\begin{definition}{\bf (Regular Hamiltonians).}\label{Hreta}
We denote by $\cH_{r}(\th_{\mathtt w})$ 
the subspace of 
$\cA_{r}(\th_{\mathtt w})$ of  Hamiltonians $H$ such that
\begin{equation}\label{normaHamilto}
|H|_{\cH_{r}(\th_{\mathtt w})}
=
|H|_{r,\tw}
:=
r^{-1} \Big(
\sup_{\norm{u}_{{\mathtt w}}\leq r} 
\norm{{X}_{{\underline H}}}_{{\mathtt w}} 
\Big) < \infty\,.
\end{equation}
\end{definition}

\begin{remark}
\label{embeddignsspaces}
We remark the following facts:

\noindent
$\bullet$
Given two positive sequences $\tw = \pa{\tw_j}_{j\in\Z},\tw' = (\tw'_j)_{j\in\Z}$
we write that $\tw\leq \tw'$ if the inequality holds
point wise, namely
\[
\tw\leq \tw' \quad
\iff\quad
\tw_j\leq \tw'_j\,,\ \ \ \forall\, j\in\Z\,.
\]
In this way if $r'\le r$ and $\tw\leq \tw'$ 
then $B_{r'}(\th_{\mathtt w'}) \subseteq B_r(\th_\tw)$.

\noindent
$\bullet$ If a Hamiltonian $H$ satisfies \eqref{real},  it means that it 
is real analytic in the real and imaginary part of $u$.

\noindent
$\bullet$ If a Hamiltonian $H$ satisfies \eqref{momento} then it 
Poisson commutes with $\sum_{j\in\Z} j\abs{u_j}^2$.

\noindent
$\bullet$ The Hamiltonian functions being defined modulo a constant term, 
we shall assume without loss of generality that $H(0)=0$. 
\end{remark}
Finally,  let us consider  a regular Hamiltonian 
$S\in\cH_r(\th_\tw)$ and  its flow $\Phi_{S,t}$
which is well-defined (see Lemma \ref{ham flow} for details), 
and let 
\begin{equation}\label{quadraticBeam}
D_{\omega}:=\sum_{j\in\Z}\omega_{j}|u_j|^{2}\,,
\end{equation}
and its flow $\phi_{\omega, t}$,  
where $\omega_j = \sqrt{j^4 + \mathtt{m}}$.

\begin{definition} \label{def:adjoperator}
$(i)$ The \emph{Lie derivative} of $H$ along the flow of $S$ is given by 
\begin{equation}\label{liederiv}
L_S H =  \frac{d}{dt}_{|t=0} \phi_{S,t}^* H(u) = \frac{d}{dt}_{|t=0} H(\phi_{S,t}(u))\,.
\end{equation}

\noindent
$(ii)$ Given $H\in \cH_{r}(\th_{\mathtt w})$  we define the \emph{adjoint action} 
of the Hamiltonian $D_{\omega}$ 
as the Lie derivative operator 
\begin{equation}\label{def:adjaction}
L_{\omega} H:= \frac{d}{dt}_{|t=0} \phi_{\omega,t}^* H 
= 
\sum_{(\al,\bt)\in \mathcal{M}} 
-{\rm i}\big(\omega \cdot(\al-\bt)\big)H_{\al,\bt}u^{\al}\bar{u}^{\bt}\,,
\end{equation}
where $\mathcal{M}$ is the set of indexes defined in \eqref{mass-momindici}.
\end{definition}

\begin{remark}{\bf (Change of variables).}\label{rmk:change}
Along the paper we shall study how a Hamiltonian 
$H$ behaves  along the flow of 
a given regular Hamiltonian $S$.
In fact one has
\[
\begin{aligned}
\frac{d}{dt} H(\phi_{S,t}(u))=dH(\phi_{S,t}(u))\cdot X_{S}(\phi_{S,t}(u))
&=
\Omega(X_{H}(\phi_{S,t}(u)), X_{S}(\phi_{S,t}(u)))
\\&
=\{H,S\}\circ \phi_{S,t}(u)
\stackrel{\eqref{liederiv}}{=}(L_{S}H)\circ \phi_{S,t}(u)\,.
\end{aligned}
\]
Then \eqref{liederiv} corresponds to $L_{S}H=\{H,S\}$.
Moreover,
from the formula above,
one formally  deduces that the well-known ``Lie expansion''
\[
H(\phi_{S,t}(u))= e^{L_{S}}H
=
\sum_{k=0}^{\infty} \frac{t^k}{k!} L_{S}^{k} H\,,
\qquad 
L_{S}^{k}H:= \{L_{S}^{k-1}H,S\} \,, \forall k\geq1\,,\;\; L_{S}^0={\rm Id}\,.
\]
\end{remark}

\noindent
In our work the crucial point is 
that all the dependence on the parameters $r,\tw$ 
of the  norm in \eqref{normaHamilto}  can be {\it encoded}
in the coefficients
\begin{equation}\label{persico}
c^{(j)}_{r,\tw}(\al,\bt)
:=
r^{|\al|+|\bt|-2}
\frac{\tw_j^2}{\tw^{\al+\bt}}\,,\qquad \tw^{\al+\bt} 
= \prod_{j\in\Z}\tw_j^{\al_j+\bt_j}\,,
\end{equation}
defined for any $\al,\bt\in\N^\Z$ and $j\in\Z$.
In view of our choices of the weights in \eqref{peso sub} and \eqref{peso sob}
we have that the coefficients in \eqref{persico} have the following form:
\begin{align}
&\SE) \;\; {\rm case:}\qquad 	
c^{(j)}_{r,\tw}(\al,\bt)=r^{|\al|+|\bt|-2}\frac{\lfloor j\rfloor^{ 2p}}{\prod_{i\in\Z}\lfloor j\rfloor^{p(\al_i+\bt_i)}}
e^{ s\big(2\lambda({j})-\sum_{i\in\Z}(\al_i+\bt_i)\lambda({i}) \big)}\,;\label{coeffSE}
\\
&\SO)\;\; {\rm case:}
\qquad 	
c^{(j)}_{r,\tw}(\al,\bt)=
r^{|\al|+|\bt|-2}\frac{\lfloor j\rfloor^{ 2p}}{\prod_{i\in\Z}\lfloor i\rfloor^{p(\al_i+\bt_i)} }\,.\label{coeffSobo}
\end{align}
	
\begin{remark}{\bf (Basic embeddings of spaces of Hamiltonians).}\label{rmk:basicembHam}
Recalling Remark \ref{embeddignsspaces} one can notice that
 if $r'\le r  $ and $\tw\leq \tw'$ then
$ \cA_{r}(\th_\tw) \subseteq \cA_{r'}(\th_{\tw'})$. 
In the following (see Proposition \ref{crescenza}) 
we give conditions on the parameters 
that $(r,\tw),(\rs,\tw')$ (with $\rs\le r$) which ensure the (not trivial) inclusion 
$ \cH_{r}(\th_\tw) \subseteq \cH_{\rs}(\th_{\tw'})$. 
That condition will be given in terms 
of the ratio of the coefficients $c^{(j)}_{r,\tw}(\al,\bt)$, $c^{(j)}_{r',\tw'}(\al,\bt)$.
\end{remark}

\subsection{Properties of regular Hamiltonians}\label{sez: belle norme}
We now collect some properties 
of the norm in \eqref{normaHamilto}. 
For any $H\in \cH_{r}(\th_{\mathtt w})$ we define  a map
\[
B_1(\ell^2)\to \ell^2 \,,\quad y=\pa{y_j}_{j\in \Z }\mapsto 
\pa{Y^{(j)}_{H}(y;r,\tw)}_{j\in \Z}
\]
by setting
\begin{equation}\label{giggina}
Y^{(j)}_{H}(y;r,\tw) := 
\sum_{(\al,\bt)\in\mathcal{M}} |H_{\al,\bt}| \frac{(\al_j+\bt_j)}{2}c^{(j)}_{r,\tw}(\al,\bt) y^{\al+\bt-e_j}
\end{equation}
where  $e_j$ is the $j$-th basis vector in $\N^\Z$, while the coefficient
$c^{(j)}_{r,\tw}(\al,\bt)$ is defined right above in \eqref{persico}.
The following properties give a systematic way for 
computing the norm of a given Hamiltonian 
and its relation w.r.t. another one.

By Lemma $3.1$ in \cite{BMP:CMP} (
see also Lemmata $3.3$, $3.4$ and $A.1$ in \cite{ProStolo})
we have the following.

\begin{lemma} \label{norme proprieta} 
Let $\ri, r'>0,$ $\twi,\twf\in \R_+^\Z$. 
The following properties hold.

\vspace{0.3em}
\noindent
$(1)$ The norm of $H$ can be expressed as
\begin{equation}\label{ypsilon}
\abs{H}_{r,\tw}= 
\sup_{|y|_{\ell^2}\le 1}\abs{Y_H(y;r,\tw)}_{\ell^2}\,.
\end{equation}

\vspace{0.3em}
\noindent
$(2)$ Given $H^{(1)}\in \cH_{r',\twf}$ and $H^{(2)}\in \cH_{\ri,\twi}$\,,
such that $\forall\, \al,\bt\in \N^\Z$ and  $j\in \Z$ with $\al_j+\bt_j\neq 0$ 
one has
\begin{equation}\label{alberellobello}
|H^{(1)}_{\al,\bt}| c^{(j)}_{\rf,\twf}(\al,\bt)  \le 
c|H^{(2)}_{\al,\bt}| c^{(j)}_{\ri,\twi}(\al,\bt),
\end{equation}
for some $c>0$, then
\[
|H^{(1)}|_{\rf,\twf}
\le c |H^{(2)}|_{\ri,\twi}\,.
\]
\end{lemma}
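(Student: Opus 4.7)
The plan is to see both parts as consequences of a single computation: the substitution $u_j = r y_j/\tw_j$ turns the constraint $\|u\|_\tw\le r$ into $|y|_{\ell^2}\le 1$, and under it the weighted Hamiltonian vector field of the majorant, evaluated on non-negative real data, matches $r\cdot Y_H(y;r,\tw)$ up to a unimodular factor. Once this identity is in hand, \eqref{ypsilon} follows from positivity, and part $(2)$ is a termwise consequence.

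For part $(1)$, I would first compute, from the power series \eqref{HamPower} of $\und H$, that
\[
\partial_{\bar u_j} \und H (u,\bar u)= \sum_{(\al,\bt)\in\cM} |H_{\al,\bt}|\,\bt_j\, u^\al \bar u^{\bt-e_j}\,,
\]
and then evaluate at $u_j=\bar u_j = r y_j/\tw_j$ with $y_j\ge 0$, obtaining
\[
\tw_j\cdot \partial_{\bar u_j}\und H\big|_{u=\bar u = ry/\tw}
= r \sum_{(\al,\bt)\in \cM} |H_{\al,\bt}|\,\bt_j\,c^{(j)}_{r,\tw}(\al,\bt)\,y^{\al+\bt-e_j}\,,
\]
directly from the definition of $c^{(j)}_{r,\tw}(\al,\bt)$ in \eqref{persico}. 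The reality condition \eqref{real} means $|H_{\al,\bt}|=|H_{\bt,\al}|$, so the map $(\al,\bt)\mapsto(\bt,\al)$ is a bijection of the summation that produces the same expression with $\al_j$ in place of $\bt_j$; averaging the two yields precisely the symmetric factor $(\al_j+\bt_j)/2$ of \eqref{giggina}, hence $\tw_j X_{\und H}^{(j)}\big|_{u=ry/\tw} = -\imm\, r\, Y^{(j)}_H(y;r,\tw)$ for non-negative $y$. Since $\und H$ has non-negative coefficients, for a generic complex $u$ one has $|X_{\und H}^{(j)}(u)|\le X_{\und H}^{(j)}(|u|)$ by the triangle inequality, so the supremum in \eqref{normaHamilto} is attained on non-negative real $u$; combined with $|Y_H(y;r,\tw)|_{\ell^2}\le Y_H(|y|;r,\tw)|_{\ell^2}$, this gives \eqref{ypsilon}.

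Part $(2)$ is then a one-line consequence. In the series \eqref{giggina} defining $Y^{(j)}_{H^{(1)}}(y;r',\tw')$, the monomial $y^{\al+\bt-e_j}$ does not depend on $(r',\tw')$, so the pointwise bound \eqref{alberellobello} gives
\[
|H^{(1)}_{\al,\bt}|\,\tfrac{\al_j+\bt_j}{2}\,c^{(j)}_{r',\tw'}(\al,\bt) \le c\,|H^{(2)}_{\al,\bt}|\,\tfrac{\al_j+\bt_j}{2}\,c^{(j)}_{r,\tw}(\al,\bt)
\]
for every multi-index with $\al_j+\bt_j\ne 0$, while in the opposite case both sides vanish and \eqref{alberellobello} is not needed. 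Termwise summation yields $0\le Y^{(j)}_{H^{(1)}}(y;r',\tw')\le c\, Y^{(j)}_{H^{(2)}}(y;r,\tw)$ on the non-negative cone; taking $\ell^2$-norms in $j$, supremum over $|y|_{\ell^2}\le 1$, and invoking part $(1)$ closes the argument.

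The only genuinely delicate step is the symmetrization producing $(\al_j+\bt_j)/2$ in part $(1)$: without the reality of $H$ one would be stuck with the asymmetric factor $\bt_j$, and the clean duality with $Y_H$ would fail. Everything else is book-keeping involving the explicit formula \eqref{persico}.
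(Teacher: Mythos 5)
Your proof is correct, and it follows the standard route: the paper itself gives no argument here (it cites Lemma 3.1 of [BMP:CMP]), and your substitution $u_j=ry_j/\tw_j$ together with the restriction of the supremum to the non-negative real cone and the symmetrization via the reality condition is exactly the mechanism behind that cited lemma. Part (2) then follows termwise as you say, so nothing is missing.
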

The following proposition gathers the immersion properties of the norm
$|\cdot|_{r,\tw(p,s)}$ with respect to the parameters $p,s$. 

\begin{proposition}{\bf (Monotonicity).}\label{crescenza}
For any $p>1, s>0$ the norm $\norm{\cdot}_{r,\tw}$ is monotone increasing in $r$. Moreover, letting $r>0$  the following holds. 

\noindent
$\SE)$ Consider $\mathtt{w}$ as in \eqref{peso sub}. 
For any $\s,s>0$ we have
\begin{equation}\label{emiliaparanoica}
|H|_{r,\tw(s+\s,p)} \le  |H|_{r,\tw(s,p)}\,.
\end{equation}

\noindent
$\SO)$ Consider $\mathtt{w}$ as in \eqref{peso sob}.
For any $p' >0$, $p>1$,
we have
\begin{equation}\label{emiliapara2}
|H|_{r,\tw(p+p')} \le |H|_{r,\tw(p)}\,.
\end{equation}
\end{proposition}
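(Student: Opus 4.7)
The plan is to apply Lemma \ref{norme proprieta}(2) with $H^{(1)}=H^{(2)}=H$ and $c=1$, which reduces each of the three monotonicity assertions to a pointwise comparison of the coefficients $c^{(j)}_{r,\tw}(\al,\bt)$ defined in \eqref{persico}, restricted to those $(\al,\bt)\in\cM$ with $\al_j+\bt_j\ge 1$ (only such tuples contribute to $Y^{(j)}_H$ in \eqref{giggina}). Monotonicity in $r$ then follows immediately from \eqref{persico}: for $r_1\le r_2$ one has $c^{(j)}_{r_1,\tw}(\al,\bt)/c^{(j)}_{r_2,\tw}(\al,\bt)=(r_1/r_2)^{|\al|+|\bt|-2}\le 1$, because $|\al|+|\bt|\ge 2$ by Definition \ref{def:admiHam}.

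For case $(\SE)$, the expression \eqref{coeffSE} yields the explicit ratio
\[
\frac{c^{(j)}_{r,\tw(s+\s,p)}(\al,\bt)}{c^{(j)}_{r,\tw(s,p)}(\al,\bt)}=\exp\Big(\s\bigl(2\lambda(j)-\sum_{i\in\Z}(\al_i+\bt_i)\lambda(i)\bigr)\Big),
\]
so the whole matter reduces to the inequality $2\lambda(j)\le\sum_i(\al_i+\bt_i)\lambda(i)$. I would split into two cases. If $\al_j+\bt_j\ge 2$, the claim is immediate by bounding the sum below by its $j$-th term. If $\al_j+\bt_j=1$, say $\al_j=1$ and $\bt_j=0$, momentum conservation $\pi(\al-\bt)=0$ furnishes the representation $j=\sum_{i\ne j}i(\bt_i-\al_i)$; iterating the stated sublinearity together with the evenness of $\lambda$ then produces
\[
\lambda(j)\le\sum_{i\ne j}|\al_i-\bt_i|\lambda(i)\le\sum_{i\ne j}(\al_i+\bt_i)\lambda(i),
\]
and combining with $\al_j+\bt_j=1$ closes the case.

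Case $(\SO)$ would be handled by exactly the same dichotomy, with $\log\lfloor\cdot\rfloor$ playing the role of $\lambda$; the required sublinearity is $\lfloor j_1+j_2\rfloor\le\lfloor j_1\rfloor\lfloor j_2\rfloor$, which follows at once from $|j_1+j_2|\le 2\max\{|j_1|,|j_2|\}\le\lfloor j_1\rfloor\lfloor j_2\rfloor$. The one step I expect to need some extra care is the degenerate sub-case of $\al_j+\bt_j=1$ in which $\al_i=\bt_i$ for every $i\ne j$: momentum then forces $j=0$ and the sublinearity bound becomes vacuous, so the inequality must be recovered from the minimality of $\lambda$ (resp.\ of $\lfloor\cdot\rfloor$) at the origin, together with the fact that $|\al|+|\bt|\ge 2$ forces the existence of at least one further index $k\ne 0$ with $\al_k=\bt_k\ge 1$ contributing to the right-hand side. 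This is the unique spot in which the admissibility constraint $|\al|+|\bt|\ge 2$ from Definition \ref{def:admiHam} plays a genuine role.
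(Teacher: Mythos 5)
Your proof is correct, and its skeleton --- reduce each claim to a pointwise comparison of the coefficients $c^{(j)}_{r,\tw}(\al,\bt)$ via Lemma \ref{norme proprieta}(2), then exploit momentum conservation together with sub-linearity of the weight --- is exactly the paper's. Where you diverge is in how the key inequality $2\lambda(\jap{j})\le\sum_i(\al_i+\bt_i)\lambda(\jap{i})$ is established in case $\SE$. The paper passes through the decreasing rearrangement $\na=\na(\al+\bt)$ of Definition \ref{n star}, bounds $\lambda(\jap{j})\le\lambda(\na_1)\le\lambda(\sum_{l\ge2}\na_l)$ using \eqref{eleganza}, and then invokes the technical Lemma \ref{constance beam sub-immersion} (inequality \eqref{mortazza}) to conclude; your dichotomy $\al_j+\bt_j\ge2$ versus $\al_j+\bt_j=1$ replaces all of this by a direct iteration of the footnote's sub-linearity along the momentum identity $j=\sum_{i\ne j}i(\bt_i-\al_i)$, so you never need \eqref{mortazza}, whose full strength (the extra term $c\sum_{l\ge3}\lambda(\na_l)$) is only required later in the homological-equation estimates. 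Your version is therefore more elementary and self-contained for the purposes of this proposition, and it has the additional merit of isolating explicitly the degenerate sub-case $\al_i=\bt_i$ for all $i\ne j$ (which forces $j=0$); this is precisely the point where the paper's Sobolev argument is terse, since its reduction to \eqref{pool23} and the subsequent use of momentum there tacitly assume $\al_j\ne\bt_j$, a case your branch $\al_j+\bt_j\ge2$ disposes of cleanly. What the paper's route buys in exchange is uniformity: the rearrangement formalism and Lemma \ref{constance beam sub-immersion} are set up once and reused verbatim in the small-divisor estimates of Proposition \ref{shulalemma}, so no separate inequality needs to be proved there.
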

For the moment we omit the proof of the proposition 
above and we refer the reader to Appendix \ref{proofofcrescenza}.

\vspace{0.7em}
\noindent
By Proposition $2.1$ in \cite{BMP:CMP}
and Lemma $3.5$ in \cite{ProStolo} we have that 
the scale $\{\cH_{r}(\th_{\mathtt w})\}_{r>0} $ is a  
Banach-Poisson algebra in the following sense.
\begin{proposition}{\bf (Poisson Brackets).}\label{fan}
For $0 <\rho\leq r$  we have
\begin{equation}\label{commXHK}
|\{F,G\}|_{r,\tw}
\le 
4\Big(1+\frac{r}{\rho}\Big)
|F|_{r+\rho,\tw}
|G|_{r+\rho,\tw}\,.
\end{equation}
\end{proposition}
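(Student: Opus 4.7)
The plan is to invoke the majorant characterization of the norm from Lemma~\ref{norme proprieta}. From the Poisson bracket formula \eqref{poipoisson} one has the coefficient-wise domination
\begin{equation*}
\underline{\{F,G\}}(u) \;\preceq\; \sum_{j\in\Z}\bigl(\partial_{u_j}\underline{F}\cdot \partial_{\bar u_j}\underline{G} + \partial_{\bar u_j}\underline{F}\cdot \partial_{u_j}\underline{G}\bigr),
\end{equation*}
so, by Lemma~\ref{norme proprieta}(2), it suffices to bound the Hamiltonian vector field of the right-hand side at radius $r$ in terms of those of $F$ and $G$ at radius $r+\rho$.

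I would then compute the $k$-th component $Y^{(k)}_{\{F,G\}}(y;r,\tw)$ explicitly as a double sum indexed by the multi-indices $(\al,\bt),(\al',\bt')$ of $F,G$ and by the contracted index $j\in\Z$. Comparing the coefficient $c^{(k)}_{r,\tw}(\al+\al'-e_j,\bt+\bt'-e_j)$ with the product $c^{(j)}_{r+\rho,\tw}(\al,\bt)\,c^{(k)}_{r+\rho,\tw}(\al',\bt')$ via \eqref{persico}, all the weight factors telescope and only a power $\bigl(r/(r+\rho)\bigr)^{|\al|+|\bt|+|\al'|+|\bt'|-4}$ survives. The differentiation produces a combinatorial factor at most $(|\al|+|\bt|)(|\al'|+|\bt'|)$, which is absorbed by the Cauchy-type inequality $\sup_{x\ge 1} x\bigl(r/(r+\rho)\bigr)^x \le e^{-1}\,r/\rho$; a Cauchy--Schwarz step in the contracted index $j$, after noting that the $\tw_j$ at the contracted site cancels between numerator and denominator, factorizes the double sum into a product of $\ell^2$-norms of $Y_F(\,\cdot\,;r+\rho,\tw)$ and $Y_G(\,\cdot\,;r+\rho,\tw)$. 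Taking the supremum over $|y|_{\ell^2}\le 1$ and applying Lemma~\ref{norme proprieta}(1) then yields the product $|F|_{r+\rho,\tw}\,|G|_{r+\rho,\tw}$.

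The constant $4(1+r/\rho)$ decomposes as follows: the factor $4$ combines the two terms in \eqref{poipoisson} with the symmetric choice of which of $F,G$ absorbs the Cauchy derivative-loss; the additive structure $1+r/\rho$ separates the ``direct'' contribution, where the combinatorial factor is dominated independently of $\rho$ (giving the $1$), from the ``derivative-loss'' contribution, which produces $r/\rho$. The main obstacle is not conceptual but combinatorial: tracking the multi-index manipulations and the infinite sum over $j\in\Z$ carefully enough to match the stated constant without extraneous factors, and checking that our weights $\tw$ (either Sobolev \eqref{peso sob} or sub-exponential \eqref{peso sub}) interact well with the convolution structure, which in turn is guaranteed by the submultiplicativity properties underlying Lemma~\ref{algebra}. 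Since the statement is attributed to Proposition~2.1 of \cite{BMP:CMP} and Lemma~3.5 of \cite{ProStolo}, the overall scheme is classical and what remains is the bookkeeping of constants.
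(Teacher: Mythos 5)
The paper does not actually prove this proposition---it only cites Proposition $2.1$ of \cite{BMP:CMP} and Lemma $3.5$ of \cite{ProStolo}---and your sketch correctly reconstructs the standard argument of those references: majorant domination of the bracket, coefficient comparison via Lemma \ref{norme proprieta}(2) with the weights telescoping at the contracted index (so the estimate is weight-independent and needs no input from Lemma \ref{algebra}), and a Cauchy-type estimate to absorb the combinatorial derivative loss. The one slip is in that Cauchy step: $\sup_{x\ge 1}x\,\bigl(r/(r+\rho)\bigr)^{x}\le \bigl(1+\tfrac{r}{\rho}\bigr)/e$ rather than $e^{-1}r/\rho$, and it is this single factor (not a splitting into a ``direct'' term giving $1$ and a ``loss'' term giving $r/\rho$) that produces the $\bigl(1+\tfrac{r}{\rho}\bigr)$ in \eqref{commXHK}.
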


\subsection{Graded Poisson structure and conjugations }
We start by defining a degree decomposition  which endows $\cH_{r}(\th_{\tw})$ 
with a graded Poisson algebra structure.
\begin{definition}{\bf (Scaling degree).}\label{def:scalingdegree}
Given $\td \in\N$, let  $\cH^{(\td)}$  be the vector space 
of homogeneous polynomials of degree $\td + 2$, 
that is admissible  Hamiltonians of the form
\[
\sum_{\substack{(\al,\bt)\in\mathcal{M} \\ |\al| + |\bt| = \td + 2}} H_{\al,\bt} u^{\al} \bar{u}^{\bt}\,.
\]
We shall say that a Hamiltonian $H$ has 
\emph{scaling degree $\ge\td=\td(H)$} if 
\[
H\in\cH^{(\ge\td)} = \cH^{(\td)} \oplus_{h>\td} \cH^{(h)}\,.
\]
Accordingly, we shall define projections 
associated with this direct sum decomposition and write
\begin{equation}
\label{proietto}
\Pi^{(\td)} H = \sum_{
\substack{(\al,\bt)\in\mathcal{M}\\
|\al| + |\bt| = \mathtt{d} + 2}} 
H_{\al,\bt}u^{\al} \bar{u}^{\bt}\,,
\quad \quad 
\Pi^{(>\td)} H = \sum_{
\substack{(\al,\bt)\in\mathcal{M} \\ |\al| + |\bt| > \mathtt{d} + 2}} 
H_{\al,\bt}u^{\al} \bar{u}^{\bt}\,.
\end{equation}
We say that $\td(0)=+\infty$.
\end{definition}

\begin{remark}
With this definitions, quadratic Hamiltonians have scaling degree $0$. 
Essentially  $H$ has scaling degree $\td$ if and only if  
it has a zero of order $\td+2$ at zero.
\end{remark}
Definition \ref{def:scalingdegree} produces a graded Poisson algebra structure. 
Moreover one has the following result.

\begin{lemma}\label{gasteropode} 
The projection operators are continuos. In particular, the following hold.

\noindent
$(i)$ If $H\in\cH_r(\th_\tw)$ with $\td(H)=\td$, then one has 
\begin{equation}\label{bound proiezione}
\abs{\Pi^{(\td)} H}_{r,\tw} \le \abs{H}_{r,\tw}\,, \qquad \abs{\Pi^{(>\td)} H}_{r,\tw} \le \abs{H}_{r,\tw}\,.
\end{equation}

\noindent
$(ii)$	If $H\in \cH_{r}(\th_\tw)$ with $\td(H)\geq\td$, then for all $\rf\le r$ one has
\begin{equation*}
\abs{H}^{\wc}_{\rf,\tw} \le \pa{\frac{\rf}{r}}^{\td} \abs{H}^{\wc}_{r,\tw}\,.
\end{equation*}
\end{lemma}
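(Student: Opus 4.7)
The plan is to derive both parts from Lemma \ref{norme proprieta}$(2)$, which controls $|\cdot|_{r,\tw}$ through pointwise comparisons of the weighted coefficients $|H_{\al,\bt}|\,c^{(j)}_{r,\tw}(\al,\bt)$. In both cases I would simply set $H^{(1)}$ and $H^{(2)}$ appropriately and verify \eqref{alberellobello}; the required inequalities then follow mechanically.

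For part $(i)$, I would compare $\Pi^{(\td)}H$ (respectively $\Pi^{(>\td)}H$) with $H$ itself, keeping the same radius $r$ and weight $\tw$. By Definition \ref{def:scalingdegree} the projections are obtained by discarding some of the homogeneous components of $H$, so for every $(\al,\bt)\in\cM$ and every $j\in\Z$ one trivially has $|(\Pi^{(\td)}H)_{\al,\bt}|\le|H_{\al,\bt}|$ (and likewise for $\Pi^{(>\td)}$). Hence \eqref{alberellobello} holds with $c=1$, $\rf=r$, $\twf=\twi=\tw$, and Lemma \ref{norme proprieta}$(2)$ immediately gives \eqref{bound proiezione}.

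For part $(ii)$, I would keep the weight $\tw$ fixed and compare the coefficients at radii $\rf\le r$. From the explicit expression \eqref{persico},
\begin{equation*}
c^{(j)}_{\rf,\tw}(\al,\bt) = \left(\frac{\rf}{r}\right)^{|\al|+|\bt|-2} c^{(j)}_{r,\tw}(\al,\bt)\,.
\end{equation*}
Since $\td(H)\ge\td$ only multi-indices with $|\al|+|\bt|\ge\td+2$ contribute to $H$, and since $\rf/r\le 1$, I obtain
\begin{equation*}
|H_{\al,\bt}|\,c^{(j)}_{\rf,\tw}(\al,\bt) \le \left(\frac{\rf}{r}\right)^{\td}|H_{\al,\bt}|\,c^{(j)}_{r,\tw}(\al,\bt)\,.
\end{equation*}
Applying Lemma \ref{norme proprieta}$(2)$ with $H^{(1)}=H^{(2)}=H$, $\twf=\twi=\tw$ and $c=(\rf/r)^{\td}$ yields the claim.

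There is no serious obstacle here: the norm characterisation \eqref{ypsilon} together with the multiplicative factorisation of $c^{(j)}_{r,\tw}(\al,\bt)$ in the radius $r$ reduces both statements to elementary coefficient comparisons. The only bookkeeping point in part $(ii)$ is that the exponent $|\al|+|\bt|-2$ is always at least $\td$ on the support of $H$, which is precisely what the hypothesis $\td(H)\ge\td$ encodes.
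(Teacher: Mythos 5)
Your proof is correct and follows essentially the same route as the paper: part $(ii)$ is identical, using the ratio $c^{(j)}_{\rf,\tw}(\al,\bt)/c^{(j)}_{r,\tw}(\al,\bt)=(\rf/r)^{|\al|+|\bt|-2}\le(\rf/r)^{\td}$ fed into Lemma \ref{norme proprieta}$(2)$. For part $(i)$ the paper compares the majorant vector fields directly, showing $W^{j}_{\Pi^{(\td)}H}(u)\le W^{j}_{H}(u)$ termwise, but your application of Lemma \ref{norme proprieta}$(2)$ with $c=1$ — valid since the projection only sets coefficients to zero, so $|(\Pi^{(\td)}H)_{\al,\bt}|\le|H_{\al,\bt}|$ for all $(\al,\bt)$ — is the same coefficient-domination idea and works equally well.
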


\begin{proof} 
$(i)$ We only prove the first in \eqref{bound proiezione}. 
The estimate for $\Pi^{(>\mathtt{d})}$ follows similarly.

\noindent
By absolute convergence of $H$, we can rearrange the terms and write 
$H = \sum_{\td\ge 0} \Pi^{(\td)} H$
where each term reads as in \eqref{proietto}. 
In general, for any Hamiltonian $H$ by definition of majorant norm, we have  that
\[
X^{(j)}_{\und{H}}(u) = -{\rm i} \sum_{\substack{(\al,\bt)\in\mathcal{M}}} 
|H_{\al,\bt}| \bt_j u^\al\bar{u}^{\bt - e_j}\,,
\]
\noindent
which trivially yields 
$|X_{\und{H}}(u) |_{\tw} \le |X_{\und{H}}(\und{u})|_{\tw}$, 
where $\und{u} = (|u_j|)_{j\in\Z}$.  
Observing that when evaluating the supremum of 
$X_{\und{H}}$ over $|u|_\tw \le r$ we can restrict to the case in which 
$u = (u_j)_{j\in\Z}$ has positive real components, we get
\begin{equation*}
\abs{H}_{r,\tw} 
= r^{-1}\sup_{\abs{u}_\tw \leq r} \abs{\pa{W^{j}_H(u)}_{j\in\Z}}_\tw\,,  
\, \quad \quad 
W^{j}_H(u) = \sum_{\substack{(\al,\bt)\in\mathcal{M}}} 
\abs{H_{\al,\bt}}\bbt_j \abs{u}^{\al + \bt - e_j}\,.
\end{equation*}
Then inequality \eqref{bound proiezione} follows trivially 
from $W^{j}_{\Pi^{(\td)} H}(u) \le W^{j}_H(u)$. 

\noindent
$(ii)$ By item (2) in Lemma \ref{norme proprieta}, 
it suffices to observe that 
\[
\frac{c^{j}_{r',\tw(\al,\bt)}}{c^{j}_{r,\tw}(\al,\bt)} 
= \pa{\frac{r'}{r}}^{|\al| + |\bt| - 2} \le \pa{\frac{r'}{r}}^{\td + 2 - 2}\,.
\]
This concludes the proof.
\end{proof}

\begin{remark}\label{rmk:scalaPoi}
If $F$ and $G$ are Hamiltonians in $\cH_{r}(\th_{\mathtt w})$
with \emph{scaling degree} $\mathtt{d}_1, \mathtt{d}_2$ respectively, then
the Poisson $\{F,G\}$ has scaling degree equal to 
$\mathtt{d}_1+\mathtt{d}_2$. 
In general, if the scaling degrees are $\ge \mathtt{d}_1, \mathtt{d}_2 $, 
then the scaling degree of  $\{F,G\}$ is $\ge \mathtt{d}_1 + \mathtt{d}_2$.
\end{remark}

\noindent
The following Lemma guarantees 
that the flow of a regular Hamiltonian is well-posed on $\th_{\tw}$. 
Moreover it shows how regular Hamiltonians changes 
under conjugation through flows.

\begin{lemma}{\bf (Hamiltonian flow).}\label{ham flow}
Let $0<\rho< r $,  and $S\in\cH_{r+\rho}(\th_{\mathtt w})$ with 
\begin{equation}\label{stima generatrice}
\abs{S}_{r+\rho,\tw} \leq {\delta}:= \frac{\rho}{8 e\pa{r+\rho}}\,, 
\end{equation} 
Then the time $1$-Hamiltonian flow 
$\Phi^1_S: B_r(\th_{\mathtt w})\to B_{r + \rho}(\th_{\mathtt w})$  
is well defined, analytic, symplectic with
\begin{equation}\label{pollon}
\sup_{u\in  B_r(\th_{\mathtt w})} \norm{\Phi^1_S(u)-u}_{\th_{\mathtt w}}
\le
(r+\rho)  \abs{S}_{r+\rho,\tw}\leq \frac{\rho}{8 e}\,.
\end{equation}
Moreover, for any $H\in \cH_{r+\rho}(\th_{\mathtt w})$ we have that
$H\circ\Phi^{1}_S= e^{ L_{S} } H\in\cH_{r}(\th_{\mathtt w})$ 
and
\begin{align}\label{tizio}
\abs{\es H}_{r,\tw} & \le 2 \abs{H}_{r+\rho,\tw}\,,
\\
\abs{\pa{\es - \id}H}_{r,\tw}
&\le  \delta^{-1}
\abs{S}_{r+\rho,\tw}
\abs{H}_{r+\rho,\tw}\,,\label{caio}
\\
\abs{\pa{\es - \id - \set{S,\cdot}}H}_{r,\tw} &\le \frac12 \delta^{-2}
\abs{S}_{r+\rho,\tw}^2
\abs{H}_{r+\rho,\tw}\,.\label{sempronio}
\end{align}
More generally for any $h\in\N$ and any sequence  
$(c_k)_{k\in\N}$ with $| c_k|\leq 1/k!$, we have 
\begin{equation}\label{brubeck}
\abs{\sum_{k\geq h} c_k L^k_S\pa{H}}_{r,\tw} \le 
2 |H|_{r+\rho,\tw} \big(|S|_{r+\rho,\tw}/2\delta\big)^h\,.
\end{equation}
\end{lemma}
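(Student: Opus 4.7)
The plan is to first establish that the Hamiltonian flow is well-posed on $\th_\tw$ via a direct ODE argument using the defining bound of the norm in \eqref{normaHamilto}, then to iterate the Cauchy-type estimate for Poisson brackets of Proposition \ref{fan} along a telescoping sequence of radii, and finally to identify $H\circ\Phi^1_S$ with the Lie series $e^{L_S}H$ of Remark \ref{rmk:change}.

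For the flow: from the definition \eqref{normaHamilto}, for every $u$ with $\|u\|_\tw\le r+\rho$ one has $\|X_S(u)\|_\tw\le\|X_{\underline S}(u)\|_\tw\le(r+\rho)|S|_{r+\rho,\tw}$. The hypothesis \eqref{stima generatrice} then gives $(r+\rho)|S|_{r+\rho,\tw}\le\rho/(8e)<\rho$, so a standard Picard iteration in the Banach space $\th_\tw$ produces an analytic symplectic flow $\Phi^t_S:B_r(\th_\tw)\to B_{r+\rho}(\th_\tw)$ for $t\in[0,1]$, and integrating the ODE along $[0,1]$ yields the displacement bound \eqref{pollon}.

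For the key quantitative estimate \eqref{brubeck}, fix $k\ge 1$ and consider the intermediate radii $r_j:=r+j\rho/k$, $j=0,\ldots,k$, so that $r_{j+1}-r_j=\rho/k$ and $r_j\le r+\rho$. Apply \eqref{commXHK} at radius $r_j$ with increment $\rho/k$: since $4(1+r_j/(\rho/k))=4(r_j+\rho/k)k/\rho\le 4(r+\rho)k/\rho$, one gets
\[
|L_S^k H|_{r,\tw}\le\prod_{j=0}^{k-1}\frac{4(r+\rho)k}{\rho}\cdot|S|_{r+\rho,\tw}^k|H|_{r+\rho,\tw}=\Big(\frac{k}{2e\delta}\Big)^k|S|_{r+\rho,\tw}^k|H|_{r+\rho,\tw},
\]
where the last equality uses $4(r+\rho)/\rho=1/(2e\delta)$ with $\delta=\rho/(8e(r+\rho))$, and monotonicity of $|\cdot|_{r,\tw}$ in $r$ is invoked to pass from $r_{j+1}$ to $r+\rho$. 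Combining this with the elementary bound $k^k\le e^k k!$ gives
\[
\frac{1}{k!}|L_S^k H|_{r,\tw}\le\Big(\frac{|S|_{r+\rho,\tw}}{2\delta}\Big)^k|H|_{r+\rho,\tw}.
\]
Since \eqref{stima generatrice} gives $|S|_{r+\rho,\tw}/(2\delta)\le 1/2$, summing the geometric series yields \eqref{brubeck}: any sequence $(c_k)$ with $|c_k|\le 1/k!$ satisfies
\[
\Big|\sum_{k\ge h}c_k L_S^k H\Big|_{r,\tw}\le\sum_{k\ge h}\Big(\frac{|S|_{r+\rho,\tw}}{2\delta}\Big)^k|H|_{r+\rho,\tw}\le 2\Big(\frac{|S|_{r+\rho,\tw}}{2\delta}\Big)^h|H|_{r+\rho,\tw}.
\]
Specializing $c_k=1/k!$ and $h\in\{0,1,2\}$ recovers \eqref{tizio}, \eqref{caio}, \eqref{sempronio} respectively. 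Finally, the identification $H\circ\Phi^1_S=e^{L_S}H\in\cH_r(\th_\tw)$ follows from the Lie expansion recalled in Remark \ref{rmk:change}, whose uniform absolute convergence on $B_r(\th_\tw)$ is guaranteed by the case $h=0$ just proved.

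The main obstacle is the telescoping step: a single application of Proposition \ref{fan} costs a factor $4(r+\rho)/\rho=1/(2e\delta)$, but iterating naively over $k$ applications at fixed increment $\rho$ would be useless since the intermediate radii would run past $r+\rho$. The trick is to split the gap $\rho$ into $k$ equal sub-intervals of width $\rho/k$: this introduces the factor $k^k$, which is precisely absorbed by the $k!$ coming from the Lie series via Stirling's bound $k^k\le e^k k!$, producing a summable geometric series with ratio $|S|_{r+\rho,\tw}/(2\delta)\le 1/2$.
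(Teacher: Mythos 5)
Your proof is correct, and it reconstructs in full the standard argument behind this lemma: the paper itself only cites Lemma~2.1 of \cite{BMP:CMP}, whose proof is exactly your telescoping of Proposition~\ref{fan} over the radii $r_j=r+j\rho/k$, absorption of the resulting $k^k$ by $k!$ via $k^k\le e^kk!$, and summation of the geometric series with ratio $|S|_{r+\rho,\tw}/(2\delta)\le 1/2$. All the constants check out ($4(r+\rho)/\rho=1/(2e\delta)$, and $h=0,1,2$ in \eqref{brubeck} give \eqref{tizio}--\eqref{sempronio}), so nothing further is needed.
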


\begin{proof}
Follows verbatim by Lemma $2.1$ in \cite{BMP:CMP} with $\eta=0$
and ${\rm ad }_{S} \rightsquigarrow L_{S}$.
\end{proof}

The following classical Lemma gives \emph{a priori} estimates 
on the time of definition of  flows generated by a wider class of Hamiltonians.
\begin{lemma}\label{tempotempo}
Let $\mathcal{N}\in \mathcal{A}_{r}(\tw)$ and $R\in \mathcal{H}_{r}(\th_{\tw})$
(recall Def. \ref{Hreta}) for some $r>0$.
Assume that 
\begin{equation}\label{hyp:kernel}
{\rm Re}(X_{\mathcal{N}}(v),v)_{\th_\tw}=0\,,\;\;\;\forall\, v\in \th_{\tw}\,. 
\end{equation}
Consider the dynamical system
\[
\dot{v}=X_{\mathcal{N}}(v)+X_{R}(v)\,,\;\;\;\; v(0)=v_0\,,\quad |v_0|_{\tw}\leq \frac{3}{4}r\,.
\]
Then one has
\[
\big||v(t)|_{\tw}-|v_0|_{\tw}\big|\leq \frac{r}{8}\,,\quad \forall \, |t|\leq \frac{1}{8|R|_{r,\tw}}\,.
\]
\end{lemma}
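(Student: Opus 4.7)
The plan is a standard energy estimate combined with a bootstrap argument. I would first differentiate the squared norm along the flow and then use the hypothesis \eqref{hyp:kernel} to kill the contribution of $X_{\mathcal{N}}$; what is left must be controlled by the regular-Hamiltonian norm of $R$.

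More precisely, as long as $v(t)\in B_r(\th_\tw)$, compute
\begin{equation*}
\frac{d}{dt}|v(t)|_\tw^2
=2\,{\rm Re}\,(\dot v(t),v(t))_{\th_\tw}
=2\,{\rm Re}\,(X_{\mathcal{N}}(v(t)),v(t))_{\th_\tw}
+2\,{\rm Re}\,(X_{R}(v(t)),v(t))_{\th_\tw}.
\end{equation*}
By assumption \eqref{hyp:kernel} the first term is zero. For the second term, using the majorant characterization of the norm in Definition \ref{Hreta}, one has the pointwise bound $|X_R(v)|_\tw\le |X_{\underline R}(\underline v)|_\tw$ (with $\underline v=(|v_j|)_{j\in\Z}$ so that $|\underline v|_\tw=|v|_\tw$), hence whenever $|v|_\tw\le r$,
\begin{equation*}
|X_R(v)|_\tw \le \sup_{|u|_\tw\le r}|X_{\underline R}(u)|_\tw \le r\,|R|_{r,\tw}.
\end{equation*}
Combining with Cauchy--Schwarz gives $\frac{d}{dt}|v(t)|_\tw \le r\,|R|_{r,\tw}$ as long as $v(t)\in B_r(\th_\tw)$.

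Next, I run a continuity (bootstrap) argument. Let
$T^{*}:=\sup\{\,T\ge 0\,:\,|v(t)|_\tw\le r\text{ for all }|t|\le T\,\}$.
For $|t|\le T^{*}$ integration of the differential inequality yields
\begin{equation*}
\bigl||v(t)|_\tw-|v_0|_\tw\bigr|\le r\,|R|_{r,\tw}\,|t|,
\end{equation*}
so for $|t|\le \min\{T^{*},\,1/(8|R|_{r,\tw})\}$ we obtain
$|v(t)|_\tw\le \tfrac{3r}{4}+\tfrac{r}{8}=\tfrac{7r}{8}<r$.
By the standard bootstrap (short-time existence of the flow follows from $\mathcal{N}\in\mathcal{A}_r(\tw)$ and $R\in\mathcal{H}_r(\th_\tw)$ via Lemma \ref{ham flow} and classical ODE theory in $\th_\tw$), this strict inequality forces $T^{*}\ge 1/(8|R|_{r,\tw})$, and the desired bound follows.

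The only mildly delicate point is the comparison $|X_R(v)|_\tw\le |X_{\underline R}(\underline v)|_\tw$, already implicit in the proof of Lemma \ref{gasteropode}; once this is observed, the rest is an elementary Gronwall/bootstrap argument. The backward-in-time case $t<0$ is identical by reversing the time variable.
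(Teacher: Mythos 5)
Your proof is correct and is exactly the standard energy-estimate-plus-bootstrap argument; the paper itself does not reproduce a proof but defers to Lemma 5.4 of \cite{BMP:CMP}, whose argument is the same one you give (kill the $X_{\mathcal N}$ term by \eqref{hyp:kernel}, bound $|X_R(v)|_\tw\le r|R|_{r,\tw}$ on $B_r(\th_\tw)$ via the majorant norm, integrate, and close by continuity).
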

\begin{proof}
See Lemma $5.4$ in \cite{BMP:CMP}.
\end{proof}

\vspace{0.3em}
\noindent
{\bf Resonant Hamiltonians.} 
We define the \emph{resonant} subset of $\mathcal{M}$ (see \eqref{mass-momindici})  as
\begin{equation}\label{resonant set}
\mathtt{R} = \set{(\al,\bt) \in \mathcal{M}\,:\, 
\al_j = \bt_j \, \vee \al_j = \bt_{-j} \, 
\forall j\in\Z }\,,
\end{equation}
and we denote by $\cK_r(\th_\tw)$  the subset of \emph{resonant} Hamiltonians, i.e.
\begin{equation}\label{ker}
\cK_r(\th_\tw) = \Big\{H\in\cH_{r}(h_{\tw})\, : 
\sum_{(\al,\bt) \in\mathtt{R}} H_{\al,\bt} u^{\al}\bar{u}^{\bt}\Big\}\,.
\end{equation}

\begin{remark}\label{rmk:resonant set}
Let $(\al,\bt)\in \mathtt{R}$ and $\ell=\al-\bt$. The condition in \eqref{resonant set}
implies that 
\[
\ell_{j}+\ell_{-j}=\al_j-\bt_j+\al_{-j}-\bt_{-j}\equiv0\,,\qquad \forall\, j\in\Z\,.
\]
\end{remark}

\noindent
The following results regards a fundamental properties of resonant Hamiltonians.
\begin{lemma}{\bf (Flows of Kernel Hamiltonians).}\label{lem:kernel}
Let 
\[
\tf: \th_\tw \to \th_\tw\,, \quad u\mapsto \tf(u) =  (\tf_{j} u_j)_{j\in\Z}\,,
\qquad \tf_{j} = \tf_{-j}\,,\qquad \forall j\in\Z\,.
\]
 Then any $H\in\cK_r(\th_\tw)$ poisson commutes with $\norm{\tf(u)}^2_\tw$. 
\end{lemma}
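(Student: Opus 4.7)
The plan is to reduce the statement to a term-by-term computation on monomials, exploiting the symmetry $j\mapsto -j$ built into both the multiplier $\tf$ and the weight $\tw$.

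First I would set $N(u):=\|\tf(u)\|_\tw^2=\sum_{j\in\Z}c_j|u_j|^2$ with $c_j:=\tw_j^2|\tf_j|^2$. Since the weights in \eqref{peso sub}--\eqref{peso sob} depend only on $|j|$, we have $\tw_j=\tw_{-j}$, and combined with the standing hypothesis $\tf_j=\tf_{-j}$ this gives the key symmetry $c_j=c_{-j}$ for every $j\in\Z$. Note also that $N$ is formally a (possibly divergent) admissible Hamiltonian, but its partial derivatives make pointwise sense: $\partial_{u_j}N=c_j\bar u_j$, $\partial_{\bar u_j}N=c_j u_j$.

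Next, I would compute the Poisson bracket of $N$ with an arbitrary monomial $u^\al\bar u^\bt$ using \eqref{poipoisson}. Applying $\partial_{u_j}(u^\al\bar u^\bt)=\al_j u^{\al-e_j}\bar u^\bt$ and $\partial_{\bar u_j}(u^\al\bar u^\bt)=\bt_j u^\al \bar u^{\bt-e_j}$, an immediate computation yields
\begin{equation*}
\{u^\al\bar u^\bt,N\}={\rm i}\sum_{j\in\Z}c_j(\bt_j-\al_j)\,u^\al\bar u^\bt
=-{\rm i}\Big(\sum_{j\in\Z}c_j\ell_j\Big)u^\al\bar u^\bt,
\end{equation*}
where $\ell:=\al-\bt$. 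I would then invoke Remark \ref{rmk:resonant set} for monomials indexed by $(\al,\bt)\in\mathtt{R}$, which provides $\ell_j+\ell_{-j}=0$ for every $j\in\Z$. Combined with the symmetry $c_j=c_{-j}$ and a reindexing, we obtain
\begin{equation*}
\sum_{j\in\Z}c_j\ell_j=\tfrac{1}{2}\sum_{j\in\Z}c_j(\ell_j+\ell_{-j})=0,
\end{equation*}
so $\{u^\al\bar u^\bt,N\}=0$ for each resonant monomial.

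Finally, since $H\in\cK_r(\th_\tw)\subseteq\cH_r(\th_\tw)\subseteq\cA_r(\th_\tw)$, the series defining $H$ and $\und{H}$ are absolutely convergent on $B_r(\th_\tw)$, so the Poisson bracket can be computed termwise and the vanishing on every monomial implies $\{H,N\}\equiv 0$. The only genuine point to verify is the $j\mapsto -j$ symmetry argument above; no further estimate is needed because the bracket annihilates each monomial individually, so no issue of convergence of $\{H,N\}$ beyond that already ensured by regularity of $H$ arises.
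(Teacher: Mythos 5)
Your proof is correct and follows essentially the same route as the paper's: compute the bracket termwise on resonant monomials via \eqref{poipoisson}, obtain the factor $\sum_j c_j(\al_j-\bt_j)$ with $c_j=\tw_j^2\tf_j^2$, and kill it by pairing $j$ with $-j$ using the evenness of $\tw$ and $\tf$ together with Remark \ref{rmk:resonant set}. The only cosmetic difference is that you make the symmetrization $\sum_j c_j\ell_j=\tfrac12\sum_j c_j(\ell_j+\ell_{-j})$ and the termwise-justification explicit, which the paper leaves implicit.
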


\begin{proof}
Let $H\in \cK_r(\th_\tw)$ (see \eqref{ker}).
Using \eqref{poipoisson} one has that 
\[
\begin{aligned}
\big\{H, \sum_{j\in\Z} \tw_j^2\,\tf_j^2 |u_j|^2\big\} 
&= \sum_{\al,\bt \in \mathtt{R} }H_{\al,\bt} \{ u^{\al}\bar{u}^{\bt},  
\sum_{j\in\Z} \tw_j^2\,\tf_j^2 |u_j|^2 \} 
= \sum_{\al,\bt \in \mathtt{R} }H_{\al,\bt} 
\sum_{\ell\in\Z} (\al_\ell - \bt_\ell)\tw_\ell^2\tf_{\ell}^2 u^{\al}\bar{u}^{\bt}\,.
\end{aligned}
\]
Since since $\tf$ is even in $\ell$ (as well as $\tw$) 
we have that the right hand side of the equation above reads 
\[
\sum_{\al,\bt \in \mathtt{R} }H_{\al,\bt} 
\sum_{\ell > 0} (\al_\ell + \al_{-\ell} - \bt_{-\ell} - \bt_\ell)
\tf(\ell)^2 u^{\al}\bar{u}^{\bt} = 0\,,
\]
where in the last inequality we used Remark \ref{rmk:resonant set}. 
This concludes the proof.
\end{proof}
\begin{remark}\label{rmk:kernel}
Let $\mathcal{N}\in \mathcal{K}_{r}(\th_{\tw})$. 
We have that
\[
2{\rm Re}(X_{\mathcal{N}}(v),v)_{\th_{\tw}}=\{\mathcal{N}(v), |v|^{2}_{{\tw}}\}=0\,,
\]
by Lemma \ref{lem:kernel}.  Hence we deduce that
the vector field $X_{\mathcal{N}}$ satisfies the condition \eqref{hyp:kernel}
in Lemma \ref{tempotempo}.
\end{remark}

\begin{remark}\label{oddKerham}
Let $H\in \mathcal{K}_r(\th_\tw)$ and assume $H=\Pi^{(\td)}H$
for some $\td\geq1$. 
By using \eqref{resonant set}, \eqref{ker}
one can check that $H\equiv0$ if $\td$ is \emph{odd}.
\end{remark}

\section{Small divisors}\label{sec:small div}
Recalling that 
\begin{equation}\label{dispLaw}
\begin{aligned}
\omega&:=\omega(\mathtt{m}):=(\omega_{j})_{j\in \mathbb{Z}}\in \mathbb{R}^{\mathbb{Z}}\,,
\\
\omega_{j}&:=\omega_{j}(\mathtt{m}):
=\sqrt{|j|^{4}+\mathtt{m}}\,,\qquad j\in \mathbb{Z}\,,\qquad \mathtt{m}\in[1,2]\,,
\end{aligned}
\end{equation}
and  the resonant set 
\[
\mathtt{R} = \set{(\al,\bt) \in \mathcal{M}\,:\, 
\al_j = \bt_j \, \vee \al_j = \bt_{-j} \, 
\forall j\in\Z }\,,
\]
we now give arithmetic conditions on non- resonant indexes belonging to the following set:
\begin{equation}\label{restrizioni indici}
\Lambda:=\big\{\ell\in \mathbb{Z}^{\mathbb{Z}}\; : \; 
\ell:=\alpha-\beta\,,\; \forall (\alpha,\beta)\in \mathtt{R}^{c}\big\}\,.
\end{equation}

Finally, given a vector $\ell:=(\ell_i)_{i\in \mathbb{Z}}\in \Lambda$
consider the set
$\mathcal{A}(\ell):=\{i\in \mathbb{Z} \,:\, \ell_i\neq0\}$.
We define the map
\begin{equation}\label{polloarrosto}
\ell\mapsto \mathtt{d}:=\mathtt{d}(\ell)\in \mathbb{N}
\end{equation}
where $\mathtt{d}(\ell):=\#\mathcal{A}(\ell)$. We call $\mathtt{d}(\ell)$ the \emph{cardinality} of $\ell$,
i.e. the number of components of $\ell$ which are different form zero.

\begin{definition}{\bf (Diophantine frequencies).}\label{defi:totalesmalldiv}
%
%
Given $\gamma>0$  we denote by $\mathtt{D}_{\gamma}$ the set of 
\emph{diophantine} frequencies 
\begin{equation}\label{diofSet}
\mathtt{D}_{\gamma}:=
\Big\{ \omega\in \R^{\Z} : |\omega\cdot\ell|\geq\prod_{n\in\mathbb{Z}} 
\frac{\gamma^{\mathtt{d}(\ell)}}{(1+|\ell_{n}|^{2}\langle n\rangle^{2})^{\tau}}\,,\; \tau:=\mathtt{d}(\ell)(\mathtt{d}(\ell)+2)\,,\;\;
\forall\ell\in \Lambda \Big\}\,,
\end{equation}
where  $\langle n\rangle:=\max\{1,|n|\}$ for any $n\in \mathbb{Z}$.
\end{definition}

The key proposition of this section guarantees that, for ``almost all'' choices of the 
parameter $\mathtt{m}$ the frequency vector $\omega$
in \eqref{dispLaw} belongs to the diophantine set in \eqref{diofSet}.
Our aim is to prove the following result.
\begin{proposition}{\bf (Measure estimates).}\label{prop:meas}
There exists a positive measure set $\mathfrak{M}\subseteq [1,2]$
such that
for any $\mathtt{m}\in\mathfrak{M}$, the vector $\omega(\mathtt{m})$
belongs to the diophantine set of frequencies $\mathtt{D}_{\gamma}$ in \eqref{diofSet}.
Moreover,  there exists a positive constant $\mathtt{C}$ such that
\[
\meas([1,2]\setminus \mathfrak{M})\leq \mathtt{C}\gamma\,.
\]
\end{proposition}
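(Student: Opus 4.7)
\textbf{Proof sketch of Proposition \ref{prop:meas}.}

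The strategy is the classical one of estimating, for each fixed $\ell\in\Lambda$, the ``bad set''
\[
B_\ell := \Big\{\mathtt{m}\in[1,2]\,:\, |\omega(\mathtt{m})\cdot\ell|<\prod_{n\in\Z}\frac{\gamma^{\mathtt{d}(\ell)}}{(1+|\ell_n|^2\langle n\rangle^2)^{\tau}}\Big\},\quad \tau=\mathtt{d}(\mathtt{d}+2),
\]
and then summing over $\ell\in\Lambda$ to control $\meas\big(\bigcup_{\ell\in\Lambda} B_\ell\big)$. The main obstacle is that only \emph{one} parameter $\mathtt{m}$ is available to tune the \emph{entire} infinite sequence of frequencies. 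This is the reason why one cannot hope for a first-derivative (\`a la R\"ussmann--P\"oschel) non-degeneracy but must instead exploit derivatives of $\omega\cdot\ell$ in $\mathtt{m}$ up to order $\mathtt{d}=\mathtt{d}(\ell)$.

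The plan is as follows. Write $\mathcal{A}(\ell)=\{j_1,\ldots,j_\mathtt{d}\}$, so that $\omega(\mathtt{m})\cdot\ell=\sum_{i=1}^{\mathtt{d}}\ell_{j_i}\sqrt{j_i^4+\mathtt{m}}$. Compute
\[
\partial_{\mathtt{m}}^{k}\omega_{j_i}= c_k\,(j_i^4+\mathtt{m})^{\frac12-k},\qquad c_k=\tfrac{(-1)^{k+1}(2k-3)!!}{2^{k}},\quad c_0=1,
\]
so that the $\mathtt{d}\times\mathtt{d}$ matrix $M(\mathtt{m})$ with entries $M_{k,i}=\partial_{\mathtt{m}}^{k}\omega_{j_i}$, $k=0,\ldots,\mathtt{d}-1$, factorizes as $M=\mathrm{diag}(c_k)\cdot V\cdot\mathrm{diag}((j_i^4+\mathtt{m})^{1/2})$, where $V_{k,i}=x_i^{k}$ with $x_i=(j_i^4+\mathtt{m})^{-1}$ is a Vandermonde matrix. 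Since the indices $j_i$ have \emph{distinct} values of $j_i^4$ (this is the content of membership $\ell\in\Lambda$, which rules out precisely the pairings $j\leftrightarrow -j$ that would make $x_i=x_{i'}$), one obtains
\[
|\det V|=\prod_{a<b}\frac{|j_a^4-j_b^4|}{(j_a^4+\mathtt{m})(j_b^4+\mathtt{m})}>0,
\]
so $M(\mathtt{m})$ is invertible with an operator norm of $M^{-1}$ bounded by an explicit polynomial expression in $\langle j_i\rangle$. Inverting the relation $\big(\partial_{\mathtt{m}}^{k}(\omega\cdot\ell)\big)_{k}=M(\mathtt{m})\,(\ell_{j_i})_{i}$ yields
\[
\max_{0\le k\le\mathtt{d}-1}\big|\partial_{\mathtt{m}}^{k}(\omega\cdot\ell)(\mathtt{m})\big|\ \ge\ \rho(\ell):=\frac{1}{\mathtt{d}\,\|M^{-1}(\mathtt{m})\|_\infty}\,\max_i|\ell_{j_i}|\ \ge\ \frac{c}{\prod_{n}(1+|\ell_n|^2\langle n\rangle^2)^{\mathtt{d}+1}},
\]
uniformly for $\mathtt{m}\in[1,2]$, where the last bound comes from the Vandermonde estimate together with $|\ell_{j_i}|\ge 1$.

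With this uniform lower bound on derivatives in hand, I would invoke the standard R\"ussmann lemma (see, e.g., Lemma~17.1 of R\"ussmann \cite{Russmann01}): if $g\in C^{\mathtt{d}}([1,2])$ satisfies $\max_{0\le k\le\mathtt{d}-1}|\partial^{k}g(\mathtt{m})|\ge \rho$ for all $\mathtt{m}$, then for every $\varepsilon>0$
\[
\meas\{\mathtt{m}\in[1,2]\,:\,|g(\mathtt{m})|<\varepsilon\}\ \le\ C\,\mathtt{d}\,(\varepsilon/\rho)^{1/\mathtt{d}}.
\]
Applied to $g(\mathtt{m})=\omega(\mathtt{m})\cdot\ell$ with $\varepsilon=\prod_n \gamma^{\mathtt{d}}/(1+|\ell_n|^2\langle n\rangle^2)^{\tau}$ and $\tau=\mathtt{d}(\mathtt{d}+2)$, this gives
\[
\meas(B_\ell)\ \le\ C\,\mathtt{d}\,\frac{\gamma}{\prod_{n\in\Z}(1+|\ell_n|^2\langle n\rangle^2)^{(\tau/\mathtt{d})-(\mathtt{d}+1)}}\ =\ C\,\mathtt{d}\,\gamma\,\prod_{n\in\Z}(1+|\ell_n|^2\langle n\rangle^2)^{-1},
\]
since $\tau/\mathtt{d}-(\mathtt{d}+1)=\mathtt{d}+2-\mathtt{d}-1=1$. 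The decisive point is that the exponent in the Diophantine threshold is tailored precisely so that, after the $1/\mathtt{d}$ root coming from R\"ussmann's lemma, one is left with a summable product; the losses from having to use $\mathtt{d}$-th derivatives are absorbed into the choice $\tau=\mathtt{d}(\mathtt{d}+2)$. Finally, one sums over $\ell\in\Lambda$: the sum
\[
\sum_{\ell\in\Lambda}\mathtt{d}(\ell)\prod_{n\in\Z}(1+|\ell_n|^2\langle n\rangle^2)^{-1}
\le \sum_{\mathtt{d}\ge 1}\mathtt{d}\,\Big(\sum_{k\in\Z\setminus\{0\},n\in\Z}(1+k^2\langle n\rangle^2)^{-1}\Big)^{\mathtt{d}}\cdot(\text{combinatorial factor})
\]
converges thanks to $\sum_{k,n}(1+k^2\langle n\rangle^2)^{-1}<\infty$ and the fast decay in $\mathtt{d}$. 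Setting $\mathfrak{M}:=[1,2]\setminus\bigcup_{\ell\in\Lambda}B_\ell$ yields the conclusion with $\meas([1,2]\setminus\mathfrak{M})\le \mathtt{C}\gamma$. The genuinely delicate step is obtaining the Vandermonde lower bound with \emph{explicit} dependence on the indices $j_i$, compatible with summability after taking the $\mathtt{d}$-th root; everything else is a bookkeeping of the exponents.
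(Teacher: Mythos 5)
Your overall strategy coincides with the paper's: for each $\ell\in\Lambda$ one bounds the measure of the bad set via a Vandermonde-type non-degeneracy of the matrix of $\mathtt{m}$-derivatives of $\omega\cdot\ell$, applies the R\"ussmann/Eliasson measure lemma (Lemma \ref{v.112} in the paper), and then sums over $\ell$, the choice $\tau=\mathtt{d}(\mathtt{d}+2)$ being calibrated exactly so that the exponent surviving the $1/\mathtt{d}$-th root is summable. The exponent bookkeeping and the final summation are the same as in the paper (which delegates the convergence of the sum to Lemma $4.1$ of \cite{BMP:CMP}).

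There is, however, one concrete gap in the non-degeneracy step. You assert that membership $\ell\in\Lambda$ forces the support indices $j_1,\dots,j_{\mathtt{d}}$ to have distinct values of $j_i^4$, so that the nodes $x_i=(j_i^4+\mathtt{m})^{-1}$ of your Vandermonde matrix are pairwise distinct. This is false: $\Lambda$ only excludes the resonant vectors of \eqref{resonant set}, and a non-resonant $\ell$ may perfectly well be supported on both $j$ and $-j$ (for instance $\ell_{-2}=1$, $\ell_2=2$, $\ell_1=-2$ satisfies momentum conservation and $\ell_2+\ell_{-2}\neq 0$, hence is non-resonant). For such $\ell$ your matrix $V$ has two identical columns, $\det V=0$, and the inversion step collapses. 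The paper's Lemma \ref{lem:vander} repairs exactly this point: one groups each pair $\pm q$ in the support, replaces the two unknowns $\ell_q,\ell_{-q}$ by the single combination $\ell_q+\ell_{-q}$ (which is what actually multiplies $\omega_q$, since $\omega_q=\omega_{-q}$), discards the pairs for which this sum vanishes, and invokes non-resonance to guarantee that the reduced vector $\tilde\ell$ is still nonzero and satisfies \eqref{Nosuperact}; only after this reduction is the Vandermonde determinant nonzero. With that step inserted, the rest of your argument (the bound on $M^{-1}$ in terms of $\prod_i(1+|\ell_{j_i}|^2\langle j_i\rangle^2)$, the cancellation $\tau/\mathtt{d}-(\mathtt{d}+1)=1$, and the convergence of the sum over $\ell$) matches the paper's proof.
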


The proof of the proposition above involves several argument which will be discussed below.
First of all let us define the quantity (see \eqref{mass-momindici})
\begin{equation}\label{smallDiv}
\psi(\mathtt{m},\ell):=\omega\cdot\ell\,, \qquad \forall \ell\in \mathcal{M}\,,
\end{equation}
and recall that we shall provide lower bounds on $\psi(\omega, \ell)$
only for $\ell$ belonging to the set $\Lambda$ in \eqref{restrizioni indici}.
Moreover, according to the notation \eqref{polloarrosto}, 
we can write the function in \eqref{smallDiv} as
\begin{equation}\label{pizzapomo}
\psi(\mathtt{m},\ell)=\sum_{i=1}^{\mathtt{d}}\ell_{j_i}\omega_{j_i}\,,
\qquad j_i\in\mathbb{Z}\,.
\end{equation}

\vspace{0.5em}
\noindent
{\bf Estimates of a single ``bad set''.}
We have the following.
\begin{lemma}\label{lem:vander}
For any $\ell\in \Lambda$ there exists $0\leq k\leq \mathtt{d}(\ell)-1$ such that
\begin{equation}\label{albero2}
|\partial_{\mathtt{m}}^{k}\psi(\mathtt{m},\ell)|\geq \prod_{i=1}^{\mathtt{d}}
\frac{1}{(1+|\ell_{j_i}|^{2}\langle j_i\rangle^{2})^{{\mathtt{d}(\ell)}}}\,.
\end{equation}
\end{lemma}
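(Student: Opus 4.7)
The natural approach is a classical Vandermonde-determinant argument combined with an integrality contradiction. Since $\omega_j = \omega_{-j}$, I would first collapse the indices $\pm j$: set $m_a := \ell_a + \ell_{-a}$ for $a>0$, $m_0 := \ell_0$, and $A := \{a\geq 0 : m_a\neq 0\}$. Then
\[
\psi(\mathtt{m},\ell) \;=\; \sum_{a\in A} m_a\,\omega_a(\mathtt{m})\,,\qquad m_a \in \mathbb{Z}\setminus\{0\}\,,
\]
and the hypothesis $\ell\in\Lambda$ (equivalently, $\ell$ is not antisymmetric; see Remark \ref{rmk:resonant set}) forces $\tilde{\mathtt{d}} := |A|\geq 1$, while clearly $\tilde{\mathtt{d}}\leq \mathtt{d}(\ell)$.

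Differentiating $\omega_a(\mathtt{m}) = (a^4+\mathtt{m})^{1/2}$ yields $\partial_{\mathtt{m}}^k \omega_a = c_k\,(a^4+\mathtt{m})^{1/2-k}$ with $c_k := \prod_{j=0}^{k-1}(\tfrac12 - j)\neq 0$. Thus the vector $(\partial_{\mathtt{m}}^k \psi)_{k=0}^{\tilde{\mathtt{d}}-1}$ is the image of $(m_a)_{a\in A}$ under the $\tilde{\mathtt{d}}\times\tilde{\mathtt{d}}$ matrix $V_{k,a} := c_k (a^4+\mathtt{m})^{1/2-k}$. Factoring $c_k$ out of each row and $(a^4+\mathtt{m})^{3/2-\tilde{\mathtt{d}}}$ out of each column reduces $\det V$ to a standard Vandermonde determinant in the variables $x_a := a^4 + \mathtt{m}$, giving the explicit non-vanishing formula
\[
|\det V| \;=\; \Big|\prod_{k=0}^{\tilde{\mathtt{d}}-1}c_k\Big|\,\prod_{a\in A}(a^4+\mathtt{m})^{3/2-\tilde{\mathtt{d}}}\,\prod_{\substack{a<a'\\ a,a'\in A}}(a'^{\,4}-a^{4}) \;\neq\;0\,.
\]

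The last step is to invert $V$ quantitatively and contradict integrality. Suppose toward contradiction that $|\partial_{\mathtt{m}}^k \psi(\mathtt{m},\ell)|<\epsilon$ for every $0\leq k\leq \tilde{\mathtt{d}}-1$. Applying Cramer's rule to $V\vec m = (\partial_{\mathtt{m}}^k\psi)_k$, each cofactor $\det V^{(b,k)}$ (obtained from $V$ by deleting row $k$ and column $b$) is itself a generalized Vandermonde with one exponent skipped, and can be expressed via a Schur-polynomial identity as an elementary-symmetric function times a reduced Vandermonde product in the $x_a$, $a\in A\setminus\{b\}$. Combining this with the lower bound $|a'^{\,4}-a^{4}|\geq 1$ (valid since the elements of $A\subseteq\mathbb{N}$ are distinct), one obtains an estimate of the form
\[
|m_b| \;\leq\; C_{\tilde{\mathtt{d}}}\,\epsilon\,\prod_{a\in A}(1+a^4)^{\tilde{\mathtt{d}}}\qquad \forall\, b\in A\,.
\]
Choosing $\epsilon$ equal to the right-hand side of \eqref{albero2} (times a harmless constant) and re-expressing the product over $A$ via $a=|j_i|$, and using $|\ell_{j_i}|\geq 1$ for $i\in\mathcal{A}(\ell)$ together with $\tilde{\mathtt{d}}\leq \mathtt{d}(\ell)$, one forces $|m_b|<1$ for every $b\in A$. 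This contradicts $m_b\in\mathbb{Z}\setminus\{0\}$, so some $k\leq\tilde{\mathtt{d}}-1\leq \mathtt{d}(\ell)-1$ satisfies \eqref{albero2}.

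\textbf{Main obstacle.} The algebraic core (Vandermonde + Cramer) is routine; the delicate part is the very last step, namely tuning the combinatorial constants coming from the Schur/Vandermonde cofactors so that they are absorbed into the specific form $\prod_i(1+|\ell_{j_i}|^2\langle j_i\rangle^2)^{-\mathtt{d}(\ell)}$ in \eqref{albero2}. The generous exponent $\mathtt{d}(\ell)$ (strictly larger than the effective $\tilde{\mathtt{d}}$ when pairs $\pm j$ partially cancel in $\ell_j + \ell_{-j}$) together with the factor $|\ell_{j_i}|^2\geq 1$ provides precisely the slack required for this bookkeeping.
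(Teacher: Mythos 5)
Your proposal follows essentially the same route as the paper's proof: collapse the $\pm j$ symmetry (using $\ell\in\Lambda$ to guarantee the reduced integer vector is nontrivial), express the vector of $\mathtt{m}$-derivatives of $\psi$ as a Vandermonde-type linear system in the reduced coefficients, invert it quantitatively, and conclude from the integrality bound that at least one derivative must exceed the stated threshold. The only cosmetic differences are that you invert via Cramer's rule and Schur/cofactor identities in the variables $a^4+\mathtt{m}$, while the paper bounds the entries of $M^{-1}$ directly in the reciprocal variables $\omega_{j_i}^{-2}$; the quantitative bookkeeping left implicit is of the same nature in both arguments.
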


\begin{proof}
To lighten the notation we shall write $\mathtt{d}$ instead of $\mathtt{d}(\ell)$.\\
Given $\ell\in\Lambda$, after a reordering of the indexes we can write 
$\ell = (\bar\ell,0),$  where $\bar\ell = (\ell_{j_1},\ldots,\ell_{j_{\td}})$. 
Without loss of generality, 
we can always assume that the vector $\bar{\ell}$ satisfies 
\begin{equation}\label{Nosuperact}
j_i\neq-j_k\,,\qquad \forall\; j,k=1,\ldots,\mathtt{d}\,.
\end{equation}
Indeed,  the d-pla $({j_1},\ldots,{j_{\td}})$ can be written as
\[
(k_1,\ldots, k_{p}, q_1,-q_1, q_2, -q_2\, \ldots, q_{r} , - q_{r})\,,\qquad 0\leq p\leq \mathtt{d}
\]
for some $0\leq p\leq \mathtt{d}$ and $p+2r=\mathtt{d}$, 
where $k_i$, $i=1,\ldots, p$ satisfy \eqref{Nosuperact}. 
The small divisors has the form
\[
\omega\cdot \bar{\ell}=\sum_{i=1}^{\mathtt{d}}\omega_{j_i}\ell_{j_i}=
\sum_{i=1}^{p}\omega_{k_i}\ell_{k_i}+\sum_{i=1}^{r} \omega_{q_i}(\ell_{q_i}+\ell_{-q_i})\,.
\]
Hence we can define
\[
\tilde{\ell}=(\tilde{\ell}_{k_1},\ldots, \tilde{\ell}_{k_p}, \tilde{\ell}_{q_1}, \ldots, \tilde{\ell}_{q_r} )\,,
\qquad {\rm where} \quad \left\{\begin{aligned}
&\tilde{\ell}_{k_i}=\ell_{k_i}\,,\quad i=1,\ldots p\,,
\\
& \tilde{\ell}_{q_i}=\ell_{q_i}+\ell_{-q_i}\,,\quad i=1,\ldots, r\,.
\end{aligned}\right.
\]
Since $\ell\in \Lambda$
it is not possible that at the same time $p=0$ and $\ell_{q_i}+\ell_{-q_i}=0$
 for any $i=1,\ldots, r$. Otherwise $\ell$ is a resonant vector (recall \eqref{resonant set}
 and Remark \ref{rmk:resonant set}). 
 As a consequence up to reducing the length of $\tilde{\ell}$ to $\tilde{\td}=\td(\tilde{\ell})\leq \td(\ell) $
 (by eliminating the components for which $\ell_{q_i}+\ell_{-q_i}=0$), 
 we have obtained a vector satisfying condition \eqref{Nosuperact}
 with $\tilde{\td}\leq \td$.

Hence from now on we consider $\ell\in \Lambda$  with $\td(\ell)=\td$ and satisfying \eqref{Nosuperact}.
\noindent
Notice that, for any $k\geq1$, 
\begin{equation}\label{albero3}
\partial_{\mathtt{m}}^{k}\psi(\mathtt{m},\ell)
=
\sum_{i=1}^{\mathtt{d}}\ell_{j_i}\partial_{\mathtt{m}}^{k}\omega_{j_i}
=
\Gamma(k)\sum_{i=1}^{\mathtt{d}}\ell_{j_i}(\omega_{j_i})^{1-2k}\,,
\qquad \Gamma(k):=\frac{(-1)^{k+1}}{2^{k}}(2k-3)!!\,. 
\end{equation}
Let us define $\mathtt{a}:=(\mathtt{a}_i)_{i=0,\ldots,\mathtt{d}}\in \mathbb{R}^{\mathtt{d}}$
as
\begin{equation}\label{albero4}
\partial_{\mathtt{m}}^{k}\psi(\mathtt{m},\ell)
=
\mathtt{a}_{k+1}\,,\qquad k=0,\ldots,d\mathbb{d}-1\,.
\end{equation}
Our aim is to prove that there is at least one component of the vector $\mathtt{a}$
satisfying the bound \eqref{albero2}.
In view of \eqref{albero3} we rewrite \eqref{albero4} as
\begin{equation}\label{albero6}
\Gamma M O \ell=\mathtt{a}\,,
\end{equation}
where
\begin{equation}\label{albero10}
\begin{aligned}
\Gamma&:=\left(
\begin{matrix}
1 &\ldots & \ldots&0 \\
0 & \Gamma(1) & \ldots &\vdots\\
\vdots &\ldots & \ddots & \vdots \\
0&\ldots &\ldots & \Gamma(\mathtt{d}-1)
\end{matrix}
\right)\,,
\qquad
O:=\left(
\begin{matrix}
\omega_{j_1} &\ldots & \ldots&0 \\
0 & \omega_{j_2} & \ldots &\vdots\\
\vdots &\ldots & \ddots & \vdots \\
0&\ldots &\ldots & \omega_{j_\mathtt{d}}
\end{matrix}
\right)\,,
\\
M&:=\left(
\begin{matrix}
1 &\ldots & \ldots&1 \\
\omega_{j_1}^{-2} & \ldots & \ldots &\omega_{j_{\mathtt{d}}}^{-2}\\
\vdots &\ldots & \ldots & \vdots \\
\omega_{j_{1}}^{-2(\mathtt{d}-1)}&\ldots &\ldots &\omega_{j_{\mathtt{d}}}^{-2(\mathtt{d}-1)}
\end{matrix}
\right)\,.
\end{aligned}
\end{equation}
Notice that the matrix $M$ is a Vandermonde matrix.
Moreover using that $\ell\in \Lambda$ and that \eqref{Nosuperact} holds, 
its  determinant is given by
\[
{\rm det}(M)=\prod_{i\neq k}(\omega_{j_i}^{-2}-\omega_{j_k}^{-2})
\neq 0\,,
\]
so that the matrix $M$ is invertible.
It is also easy to check that
\[
\max_{i,k=1,\ldots,\mathtt{d}}|(M^{-1})_{i}^{k}|\leq (\mathtt{d}-1)!
\prod_{i\neq k}\frac{\omega_{j_i}^{2}\omega_{j_k}^{2}}{\omega_{j_i}^{2}-\omega_{j_k}^{2}}
\lesssim 2^{-\mathtt{d}}\mathtt{d}^{-1}\big(\prod_{i=1}^{\mathtt{d}}\omega_{j_i}\big)^{\mathtt{d}}\sim
2^{-\mathtt{d}}\mathtt{d}^{-1}\big(\prod_{i=1}^{\mathtt{d}}|j_i|^{2}\big)^{\mathtt{d}}\,.
\]
Recalling \eqref{albero10} we note
\[
\max_{i=1,\ldots,\mathtt{d}}|(\Gamma^{-1})_{i}^{i}|\leq 2^{\mathtt{d}}\,,\quad
\max_{i=1, \ldots,\mathtt{d}}|(O^{-1})_{i}^{i}|\leq 1\,.
\]
Therefore
\begin{equation}\label{albero7}
\max_{i,k=1,\ldots,\mathtt{d}}|\big((\Gamma MO)^{-1})_{i}^{k}|\lesssim
\mathtt{d}^{-1}
\big(\prod_{i=1}^{\mathtt{d}}|j_i|^{2}\big)^{\mathtt{d}}\lesssim 
\mathtt{d}^{-1}
\Big(\prod_{i=1}^{\mathtt{d}}
\frac{1}{(1+|\ell_{j_i}|^{2}\langle j_i\rangle^{2})}\Big)^{-\mathtt{d}}\,.
\end{equation}
Since by \eqref{albero6}, we have $\ell=(\Gamma MO)^{-1}\mathtt{a}$,
we deduce
\[
1\leq |\ell|\lesssim \mathtt{d}
\max_{i,k=1,\ldots,\mathtt{d}}|\big((\Gamma MO)^{-1})_{i}^{k}|\|\mathtt{a}\|_{\ell^{\infty}}\,,
\]
which, together with \eqref{albero7}, implies the bound \eqref{albero2}.
\end{proof}

Now we need the following result 
(see for example Lemma B.1 \cite{Eliasson:cetraro} ):
\begin{lemma}\label{v.112}
Let  $\mathfrak{g}(x)$ be a $C^{n+1}$-smooth 
function on the segment $[1,2] $ such that 
\[
|\mathfrak{g}'|_{C^n} =\beta \quad {\rm and}\qquad  
\max_{1\le k\le n}\min_x|\partial^k \mathfrak{g}(x)|=\sigma\,.
\]
Then  one has
\[
\meas(\{x\mid |\mathfrak{g}(x)|\leq\rho\} )\leq 
C_n \left(\beta \s^{-1}+1\right) 
(\rho \s^{-1})^{1/n}\,.
\]
\end{lemma}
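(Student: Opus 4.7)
\medskip
\noindent
\textbf{Proof plan for Lemma \ref{v.112}.}
The plan is to establish the bound by induction on the order $n$, exploiting the classical Rüssmann-type argument that combines the monotonicity implied by a lower bound on the highest derivative with a one-parameter splitting of the domain.

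\medskip
\emph{Reduction.} By hypothesis there exists $k_\ast\in\{1,\ldots,n\}$ with $\min_x |\partial^{k_\ast}\mathfrak{g}(x)|\ge \sigma$. If $k_\ast<n$ we are already in the range of the inductive hypothesis and are done (up to monotonicity of the right-hand side in $n$). Hence we may assume throughout the induction that
\[
|\partial^{n}\mathfrak{g}(x)|\ \ge\ \sigma \qquad \forall\,x\in[1,2].
\]

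\medskip
\emph{Base case $n=1$.} Here $|\mathfrak{g}'|\ge \sigma$, so $\mathfrak{g}$ is strictly monotone and its inverse is Lipschitz with constant $\sigma^{-1}$. Therefore
\[
\meas\{x\in[1,2] : |\mathfrak{g}(x)|\le \rho\}\ \le\ \frac{2\rho}{\sigma},
\]
which matches the claimed estimate with $C_1$ absolute.

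\medskip
\emph{Inductive step.} Assume the statement at order $n-1$ with constant $C_{n-1}$, and let $\mathfrak{g}$ satisfy $|\partial^n\mathfrak{g}|\ge \sigma$ on $[1,2]$. Fix a threshold $\delta>0$ to be optimized, and split
\[
U_\delta := \{x\in[1,2] : |\partial^{n-1}\mathfrak{g}(x)|\le \delta\},
\qquad
V_\delta := [1,2]\setminus U_\delta.
\]
Since $|\partial^n\mathfrak{g}|\ge\sigma$, the function $\partial^{n-1}\mathfrak{g}$ is strictly monotone; hence $U_\delta$ is a single interval of length at most $2\delta/\sigma$, and $V_\delta$ consists of at most two open intervals $I_1, I_2$.

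On each $I_j$ we have $|\partial^{n-1}\mathfrak{g}|>\delta$, so the restriction $\mathfrak{g}\big|_{I_j}$ satisfies the hypothesis of the lemma at order $n-1$ with the role of $\sigma$ played by $\delta$ and with the same $\beta$ bound on $|\mathfrak{g}'|_{C^{n-1}(I_j)}\le|\mathfrak{g}'|_{C^n([1,2])}=\beta$. The inductive hypothesis then yields
\[
\meas\{x\in I_j : |\mathfrak{g}(x)|\le\rho\}\ \le\ C_{n-1}\bigl(\beta\delta^{-1}+1\bigr)\bigl(\rho\delta^{-1}\bigr)^{1/(n-1)}.
\]
Summing over $j=1,2$ and adding the contribution from $U_\delta$, we obtain
\[
\meas\{|\mathfrak{g}|\le\rho\}\ \le\ \frac{2\delta}{\sigma}+2C_{n-1}\bigl(\beta\delta^{-1}+1\bigr)\bigl(\rho\delta^{-1}\bigr)^{1/(n-1)}.
\]

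\medskip
\emph{Optimization.} The two principal terms $\delta/\sigma$ and $(\rho/\delta)^{1/(n-1)}$ balance at
\[
\delta\ \sim\ \sigma^{(n-1)/n}\rho^{1/n},
\]
for which both terms are of order $(\rho\sigma^{-1})^{1/n}$. Substituting this choice yields the claimed bound
\[
\meas\{|\mathfrak{g}|\le \rho\}\ \le\ C_n\bigl(\beta\sigma^{-1}+1\bigr)\bigl(\rho\sigma^{-1}\bigr)^{1/n},
\]
where the factor $(\beta\sigma^{-1}+1)$ propagates faithfully through the induction since the $\beta$-dependence in the inductive step enters only through the prefactor.

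\medskip
\emph{Main obstacle.} The geometric splitting is straightforward; the only delicate point is tracking the constants $C_n$ and the $\beta$-dependence through the induction (in particular verifying that the optimal choice of $\delta$ lies in the regime where the balancing is valid, and that the number of components of $V_\delta$ stays bounded at each level thanks to monotonicity of $\partial^{n-1}\mathfrak{g}$). Once the recursion for $C_n$ is set up, the estimate closes up.
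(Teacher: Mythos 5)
Your overall strategy (induction on $n$, splitting $[1,2]$ according to whether $|\partial^{n-1}\mathfrak{g}|$ is below or above a threshold $\delta$, then optimizing $\delta$) is the standard route to this R\"ussmann-type estimate; the paper itself gives no proof and simply cites Lemma B.1 of Eliasson's Cetraro notes, so your base case and the geometric part of the inductive step are in line with the classical argument. The gap is in the optimization/propagation step. With your inductive hypothesis the contribution of $V_\delta$ is $2C_{n-1}\bigl(\beta\delta^{-1}+1\bigr)\bigl(\rho\delta^{-1}\bigr)^{1/(n-1)}$, and with your choice $\delta=\sigma^{(n-1)/n}\rho^{1/n}$ one computes $\bigl(\beta\delta^{-1}+1\bigr)\bigl(\rho\delta^{-1}\bigr)^{1/(n-1)}=\beta\sigma^{-1}+\bigl(\rho\sigma^{-1}\bigr)^{1/n}$: the cross term $\beta\delta^{-1}\bigl(\rho\delta^{-1}\bigr)^{1/(n-1)}$ equals exactly $\beta\sigma^{-1}$, with no small factor $\bigl(\rho\sigma^{-1}\bigr)^{1/n}$ attached. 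Hence what your step actually yields is $\meas\{|\mathfrak{g}|\le\rho\}\le (2+2C_{n-1})\bigl(\rho\sigma^{-1}\bigr)^{1/n}+2C_{n-1}\beta\sigma^{-1}$, which does not tend to $0$ as $\rho\to 0$ and therefore is not bounded by $C_n\bigl(\beta\sigma^{-1}+1\bigr)\bigl(\rho\sigma^{-1}\bigr)^{1/n}$. No other choice of $\delta$ repairs this: minimizing $2\delta\sigma^{-1}+2C_{n-1}\beta\rho^{1/(n-1)}\delta^{-n/(n-1)}$ over $\delta$ gives a quantity of order $\rho^{1/(2n-1)}$, so carrying the factor $\bigl(\beta\delta^{-1}+1\bigr)$ through the induction degrades the exponent and the recursion never closes. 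The sentence claiming that ``the factor $(\beta\sigma^{-1}+1)$ propagates faithfully through the induction'' is precisely where the argument fails.

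The repair is standard: make the inductive statement $\beta$-free and prove it on arbitrary intervals (you need arbitrary intervals anyway, since you apply the hypothesis on $I_j\subsetneq[1,2]$). Namely show: if $|\partial^{n}\mathfrak{g}|\ge\sigma$ on an interval $I$, then $\meas\{x\in I:|\mathfrak{g}(x)|\le\rho\}\le c_n\bigl(\rho\sigma^{-1}\bigr)^{1/n}$ with $c_n$ independent of $|I|$. Your splitting then gives $2\delta\sigma^{-1}+2c_{n-1}\bigl(\rho\delta^{-1}\bigr)^{1/(n-1)}$, and your choice of $\delta$ closes the recursion $c_n=2+2c_{n-1}$. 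The lemma as stated follows a fortiori, since $\beta\sigma^{-1}+1\ge1$. Finally, in your reduction step (the case $k_\ast<n$) the appeal to ``monotonicity of the right-hand side in $n$'' needs the dichotomy: if $\rho\ge\sigma$ the claim is trivial because $\meas([1,2])=1\le\bigl(\rho\sigma^{-1}\bigr)^{1/n}$, while if $\rho\le\sigma$ then $\bigl(\rho\sigma^{-1}\bigr)^{1/k_\ast}\le\bigl(\rho\sigma^{-1}\bigr)^{1/n}$; with that spelled out, and with the $\beta$-free induction, the proof is complete.
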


\noindent
Now, for any fixed $\ell\in \Lambda$  and $\eta>0$,
we define the ``bad set'' of parameters 
\begin{equation}\label{diofSet44}
\mathcal{B}(\ell):=
\Big\{
\mathtt{m}\in [1,2] \, :\, |\omega\cdot\ell|
\leq
\prod_{n\in\mathbb{Z}}\frac{\gamma^{\mathtt{d}}}{(1+|\ell_{n}|^{2}\langle n\rangle^{2})^{\tau}} 
\Big\}
\end{equation}
with $\tau$ as in \eqref{diofSet}.
Thanks to Lemma \ref{lem:vander} we shall apply Lemma \ref{v.112}
with $n=\mathtt{d}-1$ and
\[
\begin{aligned}
&
\s\geq \prod_{i=1}^{\mathtt{d}}
\frac{1}{(1+|\ell_{j_i}|^{2}\langle j_i\rangle^{2})^{\mathtt{d}}}\,,
\quad
\rho=\prod_{n\in\mathbb{Z}}
\frac{\gamma^{\mathtt{d}}}{(1+|\ell_{n}|^{2}\langle n\rangle^{2})^{\tau}}\,,
\quad
\beta\leq\mathtt{d}!\lesssim \prod_{i=1}^{\mathtt{d}}(1+|\ell_{j_i}|^{2}\langle j_i\rangle^{2})\,.
\end{aligned}
\]
Therefore we obtain
\begin{equation}\label{misuraBad}
\meas(\mathcal{B}(\ell))\lesssim\gamma^{\frac{\mathtt{d}}{\mathtt{d}-1}}
\Big(\prod_{i=1}^{\mathtt{d}}
\frac{1}{1+|\ell_{j_i}|^{2}\langle j_i\rangle^{2}}\Big)^{\frac{\tau}{\mathtt{d}}-\mathtt{d}-1 }\,.
\end{equation}

\begin{proof}[{\bf Proof of Proposition \ref{prop:meas}}]
We define (see \eqref{diofSet} and \eqref{diofSet44})
\[
\mathcal{B}:=\bigcup_{\ell\in\Lambda}\mathcal{B}(\ell)\,,
\]
and we set $\mathfrak{M}=\mathcal{B}^{c}$.
Then the thesis follows by using the sub-additivity of the Lebesgue measure, 
the bound \eqref{misuraBad}
and by reasoning as in the proof of Lemma $4.1$ in \cite{BMP:CMP}.
\end{proof}

\begin{remark}\label{rmk:ker2}
By Remark \ref{rmk:resonant set}  for any $(\al,\bt)\in \mathtt{R}$
and $\ell=\al-\bt$ one has that $\omega\cdot\ell\equiv0$ is identically zero  for $\mathtt{m} \in[1,2]$.
On the other hand, by Proposition \ref{prop:meas}, for any $\omega\in \tD_{\gamma}$
 one has $\omega\cdot \ell\neq0$ for any $\ell\in\Lambda$. 
\end{remark}

\section{Homological equation}

Given a diophantine vector $\omega\in \mathtt{D}_{\gamma}$, 
in view of Remark \ref{rmk:ker2} and 
by formula \eqref{def:adjaction} we  deduce that
\[
L_{\omega}H=0 \qquad \Leftrightarrow\qquad  H\in \cK_r(\th_\tw)\,.
\]

\noindent
Hence the operator $L_{\omega}$ is formally invertible
when acting on the subspace  
\begin{equation}\label{range2}
\cR_r(\th_\tw)=\cK_r(\th_\tw)^{\perp}:=
\Big\{
H\in\cH_{r}(h_{\tw})\, : 
\sum_{(\al,\bt) \in\mathtt{R}^{c}} H_{\al,\bt} u^{\al}\bar{u}^{\bt}
\Big\}\,,
 \end{equation}
containing  those Hamiltonians supported on monomials
$u^{\alpha}\bar{u}^{\beta}$ with $(\alpha,\beta)\in \mathtt{R}^{c}$.
We decompose the space of regular Hamiltonians $\cH_r(\th_\tw)$
as
\[
\cH_r(\th_\tw) = \cK_r(\th_\tw)\oplus \cR_r(\th_\tw)\,,
\]
and we denote by $\Pi_{\cK}$ and $\Pi_{\cR}$
 the  continuous projections	 
 on the subspaces $\cK_r(\th_\tw)$, $ \cR_r(\th_\tw)$.
 One can note 
 \begin{equation}\label{fame}
|\Pi_{\cK}H|^\wc_{r,\tw},
 |\Pi_{\cR}H|^\wc_{r,\tw} \le |H|^\wc_{r,\tw}\,.
\end{equation}

\noindent 
Obviously, for {diophantine}  frequency,
$\cR^\wc_{r}(\th_\tw)$ and	 $\cK^\wc_{r}(\th_\tw)$
represent the range and kernel 
of  $L_\omega$ respectively.


\begin{proposition}{\bf (Inverse of the adjoint action).}\label{shulalemma}
Fix $\mathtt{N}\in \mathbb{N}$, $r>0$, $p>1$ and $s>0$.
Consider $\mathtt{w}(s,p)$ (resp. $\mathtt{w}(p)$) 
and a Hamiltonian function $f\in \mathcal{R}_r(\th_\tw)\cap \mathcal{H}^{(\tN)}$ (see Def. \ref{def:scalingdegree} and recall \eqref{range2}). 
For any $\omega\in \mathtt{D}_{\gamma}$ 
the following holds.

\smallskip
\noindent
(case $(\SE)$)
There exists an absolute constant $\mathtt{C}>0$ (independent of $\tN$) such that for any 
 $0<\s\ll1$ one has
that 
\[
|L_{\omega}^{-1}f|_{r,\mathtt{w}(p,s+\s)}\leq J_0^{\SE} |f|_{r,\mathtt{w}(p,s)}\,,
\]
where  $L_{\omega}$ is in \eqref{def:adjaction} and 
\begin{equation}\label{controlJ0}
J_0^{\SE}:= J_0^{\SE}(\s,\tN)
:=\gamma^{-4\tN} \exp\exp\Big( \Big(\frac{\tN^{2} }{\s} \mathtt{C}\Big)^{\frac{1}{\mathtt{q}-1}}  \Big)
\,.
\end{equation}
\noindent
(case $(\SO)$) 
Fix $\zeta \geq (36 \tN)^2$. There exists an absolute constant $\tC>0$ such that  
\[
|L_{\omega}^{-1} f|_{r,\mathtt{w}(p+\zeta)}\leq J_0^{\SO} |f|_{r,\mathtt{w}(p)}\,,
\]
where 
\begin{equation}\label{controlJ0caseSob}
J_0^{\SO}:=J_{0}^{\SO}(\zeta,\tN):= \, \g^{-4\tN} e^{\tC\zeta}.
\end{equation}
\end{proposition}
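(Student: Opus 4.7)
The plan is to reduce the operator estimate to a monomial-wise inequality via Lemma \ref{norme proprieta}(2), and then to combine the Diophantine lower bound of Definition \ref{defi:totalesmalldiv} with the weight gain obtained by enlarging $\tw\rightsquigarrow\tw'$. Since $f\in\cR_r(\th_\tw)$ is supported on $(\al,\bt)\in\mathtt{R}^c$ and $\omega\in\mathtt{D}_\gamma$, Remark \ref{rmk:ker2} ensures that $\omega\cdot(\al-\bt)\neq 0$ on the support of $f$; because $L_\omega$ is diagonal in the monomial basis by \eqref{def:adjaction}, one has $(L_\omega^{-1}f)_{\al,\bt}=\mathrm{i}\,[\omega\cdot(\al-\bt)]^{-1}f_{\al,\bt}$. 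Lemma \ref{norme proprieta}(2) then reduces the claim to exhibiting $J_0$ such that, for all admissible $(\al,\bt,j)$ with $|\al|+|\bt|=\tN+2$, $(\al,\bt)\in\mathtt{R}^c$, and $\al_j+\bt_j\neq 0$,
\[
\frac{1}{|\omega\cdot\ell|}\,\frac{c^{(j)}_{r,\tw'}(\al,\bt)}{c^{(j)}_{r,\tw}(\al,\bt)}\;\le\; J_0,\qquad \ell:=\al-\bt\in\Lambda.
\]

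From \eqref{diofSet} and $\td(\ell)\le|\al|+|\bt|\le 4\tN$ one gets $|\omega\cdot\ell|^{-1}\le\gamma^{-4\tN}\prod_n(1+|\ell_n|^2\langle n\rangle^2)^{\td(\ell)(\td(\ell)+2)}$, which already accounts for the $\gamma^{-4\tN}$ factor appearing in both $J_0^{\SO}$ and $J_0^{\SE}$. On the coefficient side, \eqref{coeffSE}--\eqref{coeffSobo} give
\[
\frac{c^{(j)}_{r,\tw'}}{c^{(j)}_{r,\tw}}\;=\;\lfloor j\rfloor^{2\zeta}\prod_i\lfloor i\rfloor^{-\zeta(\al_i+\bt_i)} \quad(\SO),\qquad \frac{c^{(j)}_{r,\tw'}}{c^{(j)}_{r,\tw}}\;=\;\exp\!\bigl(\sigma\bigl[2\lambda(j)-\textstyle\sum_i(\al_i+\bt_i)\lambda(i)\bigr]\bigr) \quad(\SE).
\]
In both cases I would use $\al_i+\bt_i\ge|\ell_i|$ together with the parity relation $\al_i+\bt_i\equiv|\ell_i|\!\pmod 2$ (so that $\ell_j=0$ forces $\al_j+\bt_j\ge 2$) to single out the $j$-th contribution, so that only indices $n\in\mathcal{A}(\ell)\setminus\{j\}$ carry a genuine gain; the residual term in $j$ is then controlled by momentum conservation $\sum_n n\ell_n=0$, which yields $\langle j\rangle\le(\tN+2)\max_{n\in\mathcal{A}(\ell)\setminus\{j\}}\langle n\rangle$ whenever $\ell_j\neq 0$.

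In the Sobolev case the gain per index is the polynomial $\lfloor n\rfloor^\zeta$; choosing $\zeta\ge(36\tN)^2\ge 2\td(\td+2)$ absorbs the factor $\langle n\rangle^{2\td(\td+2)}$ of the Diophantine product at each $n\in\mathcal{A}(\ell)$, and the remaining combinatorial factor is bounded uniformly by $e^{\tC\zeta}$, producing $J_0^{\SO}$. In the sub-exponential case the gain per index is only $\exp(\sigma(\log\langle n\rangle)^\tq)$, which dominates a polynomial in $\langle n\rangle$ only once $\langle n\rangle$ is large enough. I would split the Diophantine product at the threshold $N_0:=\exp\bigl((\tC\tau/\sigma)^{1/(\tq-1)}\bigr)$: for $\langle n\rangle\ge N_0$ the gain $\sigma\lambda(n)$ beats the loss $2\tau\log\langle n\rangle$ (thanks to $\tq>1$), while for $\langle n\rangle<N_0$ the product has at most $\td(\ell)\le 4\tN$ factors each bounded by $N_0^{2\tau}$. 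Substituting $\tau\lesssim\tN^2$ and optimising the constants then delivers the double-exponential $J_0^{\SE}$ of \eqref{controlJ0}.

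The main obstacle is the sub-exponential estimate: compared with the Bourgain-type framework of \cite{BMP:CMP}, the one-parameter degeneracy forces $\tau=\td(\td+2)$ to grow \emph{quadratically} in the support of $\ell$, while the available weight gain is only super-logarithmic. Striking the right balance between the $\tN^2$-blow-up of the Diophantine product and the slow sub-exponential gain, through the optimal choice of the cutoff $N_0$, is precisely what produces the double-exponential factor characteristic of $J_0^{\SE}$; the remaining bookkeeping is essentially forced by Lemma \ref{norme proprieta} and the monotonicity of Proposition \ref{crescenza}.
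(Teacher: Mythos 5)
Your opening moves — the reduction to a coefficient-ratio estimate via Lemma \ref{norme proprieta}(2), the diagonal formula for $L_\omega^{-1}$, the Diophantine bound with $\td(\ell)\le 4\tN$, and the threshold idea in the sub-exponential case — all match the paper's strategy. But there is a genuine gap: you apply the Diophantine lower bound to \emph{every} $\ell\in\Lambda$ and then try to absorb $\prod_n(1+|\ell_n|^2\langle n\rangle^2)^{\tau}$ into the weight gain index by index, handling the $j$-contribution only through first-order momentum conservation $\sum_n n\ell_n=0$. This fails. Take $\tN=2$, $\al=e_{M+1}+e_2$, $\bt=e_M+e_3$, $j=M+1$, with $M$ large: momentum is conserved and $(\al,\bt)\in\mathtt{R}^c$, yet in case $(\SE)$ the weight gain is $\exp\bigl(-\s[\lambda(M)+\lambda(2)+\lambda(3)-\lambda(M+1)]\bigr)=e^{-O(\s)}$, bounded in $M$, because the loss $2\lambda(j)$ cancels not only the gain at $j$ but essentially all of the gain at the second-largest index $M$; meanwhile the Diophantine product is $\ge M^{4\tau}\to\infty$. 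The supremum you are bounding is therefore infinite along this family (the same happens in case $(\SO)$). Your remark that the residual in $j$ is ``controlled by momentum conservation, $\langle j\rangle\le(\tN+2)\max_{n\ne j}\langle n\rangle$'' merely transfers the loss onto the second-largest index; it does not generate the net gain needed to pay the Diophantine loss carried by the two largest indices when they are comparable.

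The missing idea is the dichotomy of Lemma \ref{lem:constance2SE}, which exploits the dispersion relation $\omega_i\approx i^2$: either $|\sum_i(\al_i-\bt_i)i^2|>10\sum_i|\al_i-\bt_i|$, in which case $|\omega\cdot\ell|\ge 1$ and no Diophantine bound is needed at all (the coefficient ratio is $\le 1$ by Proposition \ref{crescenza} — this is exactly the branch containing the example above); or the second-order momentum condition \eqref{divisor} holds, and then the largest index is controlled by the \emph{squares} of the third-largest and smaller ones, $m_1\lesssim\sum_{l\ge3}\na_l^2$. It is this quadratic control, not first-order momentum conservation, that allows one to rewrite both the residual $2\lambda(j)$ and the Diophantine loss of the top two indices as sums over $\na_l$, $l\ge3$ — at the price of replacing $\lambda(\langle i\rangle)$ by $\lambda(\sqrt{\langle i\rangle})$ in the usable gain, cf. \eqref{constance2SE}, and, in the Sobolev case, of the combinatorial conversion of Lemma \ref{stimaSob:lem} via Lemma \ref{luchino} (your claim that the ``remaining combinatorial factor is bounded uniformly by $e^{\tC\zeta}$'' is not true naively either: factors like $(\tN+2)^{O(\tau)}$ must themselves be converted into products over the small indices before being absorbed). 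Once the analysis is restricted to the branch where \eqref{divisor} holds, the per-index balancing you describe does go through and yields the stated constants.
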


\begin{proof}

\noindent
{\bf Case $(\SE)$}.
Since, by hypothesis, $f$ 
belongs  to the range of the operator $L_{\omega}$, the Hamiltonian
$L_{\omega}^{-1}f$ is well-defined with coefficients given by 
\[
(L_{\omega}^{-1}f)_{\al,\bt}= \frac{f_{\al,\bt}}{-\imm \omega\cdot(\al-\beta)}\,,\qquad 
\forall \,(\al,\bt)\in\mathtt{R}^{c}\,. 
\]
Recall the coefficients in 
\eqref{coeffSE}. In view of property \eqref{alberellobello} with 
$\tw' = \tw(p,s+\s)$ and $\tw = \tw(p,s)$ and formula \eqref{def:adjaction} ,
in order to get the result it is sufficient to estimate 
 the quantity
\begin{equation}
 J_0 := 
 \sup_{\substack{j\in\Z,\,  (\al,\bt)\in\Lambda \\ 
 \al_j+\bt_j\neq 0 \\ |\al-\bt|\leq \tN+2}}
\frac{c^{(j)}_{\ri,\mathtt{w}(s+\s,p)}(\al,\bt) }
{c^{(j)}_{\ri,\mathtt{w}(s,p)}(\al,\bt) |\omega\cdot (\al-\bt)|}\,.
\end{equation}
By an explicit computation using \eqref{coeffSE} we get
\[
J_0 =  
\sup_{\substack{j\in\Z,\,  (\al,\bt)\in\Lambda \\ 
\al_j+\bt_j\neq 0  \\
 |\al-\bt|\leq \tN+2}}
\frac{e^{-\s\pa{\sum_i\lambda({i}) (\al_i+\bt_i) -2\lambda({j})}}}{\abs{\omega\cdot{\pa{\al - \bt}}}}\,.
\]
By Lemma \ref{lem:constance2SE}, 
we just have to study the case in which \eqref{divisor} holds true.
Let  $\omega\in \mathtt{D}_{\gamma}$ (recall \eqref{polloarrosto}-\eqref{diofSet}).
Since $\ell=\al-\bt$, $|\ell|\leq \tN+2$ we notice that 
$\mathtt{d}=\mathtt{d}(\ell)\leq 4\tN$ and 
$\tau=\tau(\ell)\leq 36 \tN^2$.
Therefore we have
\[
\begin{aligned}
J_0 &\leq 
\gamma^{-\mathtt{d} } \exp\left(-
\s\Big(\sum_{i} \lambda({i}) (\al_i+\bt_i) -2\lambda({j}) \Big)
+{\sum_{i\in\Z} \tau\ln(1+(\al_i-\bt_i)\langle i\rangle^{2} ) }
\right)
\\
&\stackrel{\mathclap{\eqref{constance2SE}}}{\leq}
\gamma^{- 4\tN}
\exp \left(\sum_i\left[-\frac{\s \kappa }{63} \abs{\al_i - \bt_i}\lambda(\sqrt{\jap{i}})\right]
+ 36\tN^2 \ln{\pa{1 + \pa{\al_i - \bt_i}^{2}\jap{i}^{{2}}}} \right) 
\\
&\le \gamma^{- 4\tN}  \exp{ (- 144\tN^2 \sum_i H_i(|\al_i - \bt_i|)})\,,
\end{aligned}
\]
where for $0<\s\leq 1$, $i\in \Z$ , we defined 
\[
H_i(x) := \frac{\s \kappa }{63 \times 36\tN^2} x \lambda(\sqrt{\jap{i}}) - \ln{\pa{1 + \sqrt{x\jap{i}}}}\,,
\]
where $x:= |\al_i - \bt_i| \geq 1$.
By definition of $\lambda$ (recall \eqref{es:fgrowth}),
by denoting  
\begin{equation}\label{albero1tris}
\al= \frac{\s \kappa }{63 \times 36\tN^2} \,,
\end{equation}
 we observe that there exists $X(\al)$ such that the following inequalities hold: 
\[
\al x \lambda(\sqrt{\jap{i}}) - \ln{\pa{1 + \sqrt{x\jap{i}}}} 
\ge \al \lambda(\sqrt{x \jap{i}}) - \ln{\pa{1 + \sqrt{x\jap{i}}}} 
\ge 0\,,
\quad \mbox{if}\quad \jap i \ge X^2(\al)\,. 
\]
By an explicit computation one can check that
 \begin{equation}\label{albero1bis}
 X(\alpha)=
 \exp\Big\{ 
\big(\frac{2\cdot 63\cdot 36 \tN^{2}}{\s \kappa}\big)^{\frac{1}{\mathtt{q}-1}}
 \Big\}
 \leq e^{\big(\frac{2}{\alpha}\big)^{\frac{1}{\mathtt{q}-1}}}\,.
 \end{equation}
 \\
 Consequently
\begin{equation}\label{albero1}
J_0 \le \gamma^{- 4\tN} 
\exp\big(- 144\tN^2 \inf_{ x\ge 1} \sum_{i:\jap i\le X^2(\al)} H_i(x)\big)\,.
\end{equation}
Let us compute $\inf_{x\ge 1}  H_i(x)$. We have
\[
H_i(x) \ge \hat H_i(x):= \al x \lambda(\sqrt{\jap{i}}) - \ln{\pa{1 + \sqrt{x}}} 
- \ln{\pa{1 + \sqrt{\jap{i}}}} \,.
\]
Then, since
\[
\hat H'_i(x) = \al  \lambda(\sqrt{\jap{i}}) - \frac{1}{2\sqrt{x}(1 + \sqrt{x})}=0 
\qquad \Leftrightarrow \qquad 
 \sqrt{x} =  \frac12 \left(-1 + \sqrt{1+ \frac{2}{ \al \lambda(\sqrt{\jap{i}}) }}\,\right)\,,
\]
we deduce
\begin{align*}
H_i(x) &\ge \lambda(\sqrt{i}) \left(-\frac{\al}{2} 
+ \frac{\al}{2}\sqrt{1 + \frac{2}{\al(\sqrt{\jap{i}})}}\,\right)^2 
- \ln{\left(1 +\sqrt{1+ \frac{2}{ \al \lambda(\sqrt{\jap{i}}) }}\,\right) }
- \ln{\pa{1 + \sqrt{\jap{i}}}} 
\\&\ge    
- \ln{\left(1 +\sqrt{1+ \frac{2}{ \al \lambda(1) }}\,\right)} 
- \ln{\pa{1 + X(\al)}}\,.
\end{align*}
The latter bound, together with \eqref{albero1}, implies 
\[
\begin{aligned}
J_0 &\le \gamma^{- 4\tN}
\pa{\left(1 +\sqrt{1+ \frac{2}{ \al \lambda(1) }}\,\right) \pa{1 + X(\al)}}^{ 144 \tN^2 X^2(\al)}
\\
&
\leq  \gamma^{- 4\tN}
\pa{\left(1 +\sqrt{1+ \frac{2}{ \al \ln^{\mathtt{q}}2}}\,\right)\pa{1 + X(\al)}}^{ 144 \tN^2 X^2(\al)}\,.
\end{aligned}
\]
By  \eqref{albero1tris} and \eqref{albero1bis} it follows the desired bound \eqref{controlJ0}
choosing a suitable  constant $\mathtt{C}>0$ large enough.

\medskip
\noindent
{\bf case $(\SO)$.}
We proceed as in the case $(\SE)$. 
By definition of the coefficients in the Sobolev case \eqref{coeffSobo} we have
\begin{equation}
J_0 := 
\sup_{\substack{j\in\Z,\,  (\al,\bt)\in\Lambda 
\\ \al_j+\bt_j\neq 0 
\\ |\al-\bt|\leq \tN+2}}
\frac{c^{(j)}_{\ri,\mathtt{w}(p + \delta)}(\al,\bt) }
{c^{(j)}_{\ri,\mathtt{w}(p)}(\al,\bt) |\omega\cdot (\al-\bt)|}\, 
= 
\sup_{\substack{j\in\Z,\,  (\al,\bt)\in\Lambda 
\\ \al_j+\bt_j\neq 0 
\\ |\al - \bt| \le \tN + 2}} 
\frac{ \lfloor j\rfloor^{2\delta}}{\abs{\omega\cdot{\pa{\al - \bt}}}}
\prod_{i\in\Z}\lfloor i\rfloor^{-\delta(\alpha_i+\beta_i)}.
\end{equation}
By the  diophantine condition \eqref{diofSet} we have
\[
J_0 \leq  \g^{-4\tN} 
\sup_{\substack{j\in\Z,\,  (\al,\bt)\in\Lambda 
\\ \al_j+\bt_j\neq 0 
\\ |\al - \bt| \le \tN + 2}} 
\Big(\frac{\jjap{j}^2}{\prod_{i\in\Z}\jjap{i}^{\al_i + \bt_i}} \Big)^\delta 
\prod_{i\in\Z}\big((1+|\al_i-\bt_i|^2)\langle i\rangle^2\big)^{\tau}\,.
\]
By Lemma \ref{lem:constance2SE} we only have 
to consider the case in which \eqref{divisor} holds. Recalling that 
$|\al| + |\bt| = \tN + 2$, $\td \le 4\tN$, $\zeta \ge (36\tN)^2$,
we can apply  Lemma \ref{stimaSob:lem}.  The bound \eqref{seisettedelta}
implies the estimate \eqref{controlJ0caseSob}.
\end{proof}

\section{A Birkhoff normal form step}

\textbf{Notations.} Let $r>r'>0$,  $\sigma, s>0$, $\zeta,p>0$, 
$\tK\gg1$ and $1\leq \tN\leq \tK-1$.
In the following we shall write 
\begin{equation}\label{parampara}
\tw=\tw(s,p)\quad ({\rm resp.} \;\; \tw=\tw(p))\qquad 
{\rm and} \qquad 
\tw'=\tw(s+\sigma,p)\quad  ({\rm resp.} \;\; \tw'=\tw'(p+\zeta))\,.
\end{equation}
We consider an Hamiltonian function of the form 
\begin{equation}\label{Hstep}
H = D_\omega + \sum_{\td=1}^{\tN-1}Z^{(\td)}
+  \sum_{\td=\tN}^{\tK}R^{(\td)}+ R^{(\geq \tK+1)}\,,
\end{equation}
where $D_\omega$ is in \eqref{quadraticBeam}
and
\[
\begin{aligned}
&Z^{(\td)}\in \cK^\wc_{\ri}(\th_{\twi})\cap\mathcal{H}^{(\td)}\,,
\qquad 1\leq \td\leq \tN-1\,,
\\&
R^{(\td)}\in \mathcal{H}^\wc_{\ri}(\th_{\twi})\cap\mathcal{H}^{( \td)}\,,
\qquad\tN\leq \td\leq \tK\,,
\\&
R^{(\geq\tK+1)}\in \mathcal{H}^\wc_{\ri}(\th_{\twi})\cap\mathcal{H}^{(\geq\tK+1)}\,.
\end{aligned}
\]
In the case $\tN=1$ we assume $Z^{(\td)}\equiv0$.
We set
\begin{equation}\label{def:epsiepsi}
\epsilon_{\td}:=|R^{(\td)}|_{r,\tw}\,,\;\;\tN\leq \td\leq \tK\,,
\qquad
\epsilon_{\tK+1}:=|R^{(\geq \tK+1)}|_{r,\tw}\,.
\end{equation}
\begin{lemma}{\bf (Birkhoff normal form step).}\label{dolcenera}
Consider the Hamiltonian $H$ in \eqref{Hstep}
and fix $\omega\in \mathtt{D}_{\gamma}$.
	Assume 
that
\begin{equation}\label{viadelcampo}
 J_0^{\star} \Big(\sum_{\td=\tN}^{\tK}\epsilon_{\td}+\epsilon_{\tK+1}\Big)
\leq\delta 
\qquad
\text{with}
\quad
\delta:=\frac{\ri-\rf}{16e\ri}
\,,
\end{equation}
where $J_0^{\star}=J_0^{\SE}(\s,\tN)$  in \eqref{controlJ0}
(respectively  $J_0^{\star}=J_0^{\SO}(\zeta,\tN)$  in \eqref{controlJ0caseSob}).

\noindent
Then there exists a change of variables
\begin{eqnarray}
	\label{pollon3}
	& \Phi\ :\ B_\rf(\th_{\twf})\ \to\ 
B_{\ri}(\th_{\twf})\,,
	\end{eqnarray}
such that
\begin{equation}\label{HstepPIUUNO}
H\circ\Phi= D_\omega + \sum_{\td=1}^{\tN}Z_{+}^{(\td)}+  \sum_{\td=\tN+1}^{\tK}R_{+}^{(\td)}+ R_{+}^{(\geq \tK+1)}\,,
\end{equation}
where 
\[
\begin{aligned}
&Z_{+}^{(\td)}\in \cK^\wc_{\ri}(\th_{\twi})\cap\mathcal{H}^{(\td)}\,,
\qquad 1\leq \td\leq\tN\,,
\\&
R_{+}^{(\td)}\in \mathcal{H}^\wc_{\ri}(\th_{\twi})\cap\mathcal{H}^{( \td)}\,,
\quad \tN+1\leq \td\leq \tK\,,
\\&
R_{+}^{(\geq\tK+1)}\in \mathcal{H}^\wc_{\ri}(\th_{\twi})\cap\mathcal{H}^{(\geq\tK+1)}\,.
\end{aligned}
\]
	Moreover the following estimates hold
\begin{align}
&Z_{+}^{(\td)}:=Z^{(\td)}\,,\quad 1\leq \td\leq\tN-1\,,\qquad 
|Z_{+}^{(\tN)}|_{\rf,\twf}
\leq
\epsilon_{\tN}
\,,
\label{signorinabis}
\\
|R_{+}^{(p)}|_{\rf,\twf}
&\leq 
\epsilon_{p}+
\sum_{\substack{j\geq2  \\ (j-1)\tN+\tN=p}}
\frac{\epsilon_{\tN}}{j!} \left(\frac{\epsilon_{\tN}J_0^{\star}}{2\delta}\right)^{j-1}
+
\sum_{\substack{1\leq j,\td\leq \tK \\ j\tN+\td=p}}
\frac{1}{j!}\left(\frac{\epsilon_{\tN}J_0^{\star}}{2\delta}\right)^{j} |Z^{(\td)}|_{r,\tw}	
\label{patata}
\\&	
\qquad 
+\sum_{\substack{1\leq j\leq \tK \\ \tN\leq\td\leq \tK \\j\tN+\td=p}}
\frac{\epsilon_{\td}}{j!} \left(\frac{\epsilon_{\tN}J_0^{\star}}{2\delta}\right)^{j}\,,
 \qquad \tN+1\leq \td\leq \tK\,,\nonumber
 \\
 |R_{+}^{(\geq\tK+1)}|_{\rf,\twf}
&\leq 
\epsilon_{\tK+1}+2\sum_{\td=\tN}^{\tK} 
\left(\frac{\epsilon_{\tN}J_0^{\star}}{2\delta}\right)^{\left[\frac{\tK+1-\td}{\tN}\right]}\epsilon_{\td}
+2\left(\frac{\epsilon_{\tN}J_0^{\star}}{2\delta}\right)^{\tK+1}(|Z|_{r,\tw}+\epsilon_{\tN})\,.
\label{signorina}
\end{align}
Finally, for any $\s^{\sharp}\geq 0$, $\zeta^{\sharp}\geq 0$, 
setting $\tw^{\sharp}:=\tw(s+\s+\s^{\sharp},p)$ 
(resp. $\tw^{\sharp}:=\tw(p+\zeta+\zeta^{\sharp})$)
assume the further  conditions 
\begin{equation}\label{viadelcampo'}
 \widetilde{J_0}^{\star}
 \Big(\sum_{\td=\tN}^{\tK}\epsilon_{\td}+\epsilon_{\tK+1}\Big)
\leq\delta\,,
\end{equation}
where $\widetilde{J_0}^{\star}=J_0^{\SE}(\s^{\sharp},\tN)$  in \eqref{controlJ0}
(respectively  $\widetilde{J_0}^{\star}=J_0^{\SO}(\zeta^{\sharp},\tN)$  in \eqref{controlJ0caseSob}).
Then 
\begin{equation}\label{pollon4}
\begin{aligned}
& \Phi_{\big|B_\rf(\th_{\tw^\sharp})}\ :\ B_\rf(\th_{\tw^\sharp})\ \to\ 
B_{\ri}(\th_{\tw^\sharp})\,,
\\
\sup_{u\in  B_\rf(\th_{\tw^\sharp})} &\norm{\Phi(u)-u}_{\th_{\tw^\sharp}}
\le
\ri \widetilde{J_0}^{\star} |R^{(\tN)}|_{\ri,\twi}\,.
\end{aligned}
\end{equation}
\end{lemma}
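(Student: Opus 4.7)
The plan is to perform one standard Birkhoff elimination: choose a generating Hamiltonian $S$ that kills the non-resonant part of $R^{(\tN)}$, produce the transformed Hamiltonian via the time-$1$ flow $\Phi = \Phi_S^1$, and reorganize the resulting Lie series by scaling degree.

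First I would solve the homological equation. Set
\[
S := L_\omega^{-1}\, \Pi_\cR R^{(\tN)}\,,
\]
so that $L_\omega S = \Pi_\cR R^{(\tN)}$. Since $L_\omega$ coincides with the adjoint action $\{\,\cdot\,,D_\omega\}$, antisymmetry of the bracket gives the key cancellation identity $L_S D_\omega = \{D_\omega,S\} = -\Pi_\cR R^{(\tN)}$, hence $L_S^k D_\omega = -L_S^{k-1}\Pi_\cR R^{(\tN)}$ for $k\geq 1$. By Lemma \ref{gasteropode} and Proposition \ref{shulalemma}, $S\in \cR_r(\th_{\tw'})\cap \cH^{(\tN)}$ with $|S|_{r,\tw'}\leq J_0^{\star}\,\epsilon_\tN$. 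The smallness assumption \eqref{viadelcampo} then yields $|S|_{r,\tw'}\leq \delta$, so that Lemma \ref{ham flow} (applied with, say, the intermediate radius $r_\ast = (r+r')/2$) guarantees that $\Phi := \Phi_S^1$ is a well-defined, analytic, symplectic map from $B_{r'}(\th_{\tw'})$ into $B_r(\th_{\tw'})$, satisfying \eqref{pollon3}.

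Next I would expand $H\circ\Phi = e^{L_S}H$. Writing $H = D_\omega + Z + \Pi_\cK R^{(\tN)} + \Pi_\cR R^{(\tN)} + P$ with $Z := \sum_{\td=1}^{\tN-1}Z^{(\td)}$ and $P := \sum_{\td=\tN+1}^{\tK}R^{(\td)} + R^{(\geq \tK+1)}$, and exploiting $L_S^kD_\omega = -L_S^{k-1}\Pi_\cR R^{(\tN)}$ to combine the tail of $e^{L_S}D_\omega$ with $\Pi_\cR R^{(\tN)} + \sum_{k\geq 1}\frac{1}{k!}L_S^k \Pi_\cR R^{(\tN)}$, one arrives at
\[
H\circ\Phi \;=\; D_\omega + Z + \Pi_\cK R^{(\tN)} + P + \sum_{k\geq 1}\frac{1}{k!}L_S^{k}\bigl(Z + \Pi_\cK R^{(\tN)} + P\bigr) + \sum_{k\geq 1}\frac{k}{(k+1)!} L_S^k\,\Pi_\cR R^{(\tN)}\,.
\]
All $\Pi_\cR R^{(\tN)}$ contributions at order zero cancel, which is the whole point of the choice of $S$.

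Then I would collect terms by scaling degree. Since $\td(S)=\tN$, Remark \ref{rmk:scalaPoi} gives $\td(L_S^k F)\geq k\tN + \td(F)$. Set $Z_+^{(\td)} := Z^{(\td)}$ for $\td<\tN$ and $Z_+^{(\tN)} := \Pi_\cK R^{(\tN)}$, which lies in $\cK_r(\th_\tw)$ by construction and satisfies \eqref{signorinabis} via \eqref{bound proiezione}. For $\tN<p\leq \tK$ define $R_+^{(p)} := \Pi^{(p)}$ applied to the Lie-series correction above, and bundle the contributions of scaling degree $\geq \tK+1$ into $R_+^{(\geq \tK+1)}$. To close the estimates, apply \eqref{fame}, the projection bound \eqref{bound proiezione}, and the Lie-series bound \eqref{brubeck} of Lemma \ref{ham flow}, which in our setting reads
\[
\Big| \sum_{k\geq h}c_k L_S^k F\Big|_{r',\tw'} \;\leq\; 2\,|F|_{r,\tw'}\,\Big(\tfrac{\,|S|_{r,\tw'}\,}{2\delta}\Big)^{h} \;\leq\; 2\,|F|_{r,\tw'}\,\Big(\tfrac{\,\epsilon_\tN J_0^{\star}\,}{2\delta}\Big)^{h}\,.
\]
Matching the scaling-degree equation $p = j\tN + \td$ for each Lie bracket of $S$ with the building blocks $Z^{(\td)}$, $R^{(\td)}$, and $\Pi_\cR R^{(\tN)}$, and using $\tfrac{k}{(k+1)!}\leq \tfrac{1}{k!}$, produces exactly the three sums in \eqref{patata}; for the tail one sums the geometric-type series obtained from $\bigl(\epsilon_\tN J_0^{\star}/(2\delta)\bigr)^{\lceil (\tK+1-\td)/\tN\rceil}$, yielding \eqref{signorina}. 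The final claim \eqref{pollon4} is obtained by re-running the same flow estimate on the finer weight $\tw^\sharp$: the stronger assumption \eqref{viadelcampo'} delivers $|S|_{r,\tw^\sharp}\leq \delta$, and then \eqref{pollon} in Lemma \ref{ham flow} directly gives the quantitative closeness to the identity.

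The main obstacle is the combinatorial bookkeeping in the last step: one must carefully identify, for each target scaling degree $p$, exactly which $L_S^j$-iterates acting on which object ($Z^{(\td)}$, $\Pi_\cK R^{(\tN)}$, $\Pi_\cR R^{(\tN)}$, or $R^{(\td)}$ with $\td>\tN$) contribute, keep the correct factorial weight from the Lie expansion (and the slightly improved $\tfrac{k}{(k+1)!}$ weight coming from the cancellation of $D_\omega$ against $\Pi_\cR R^{(\tN)}$), and then sum geometrically for the $\tK+1$-tail. The algebraic cancellation from the homological equation is what makes the scheme close, while Lemma \ref{ham flow} and Proposition \ref{shulalemma} are the two analytic workhorses feeding the quantitative bounds.
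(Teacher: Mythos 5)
Your proposal is correct and follows essentially the same route as the paper's proof: the same generator $S=L_\omega^{-1}\Pi_{\mathcal R}R^{(\tN)}$, the same application of Lemma \ref{ham flow} (with $\rho=r-r'$) to define $\Phi=\Phi^1_S$ and to bound the Lie series via \eqref{brubeck}, the same reorganization by scaling degree using Remark \ref{rmk:scalaPoi}, and the same restriction of $S$ to the finer weight $\tw^{\sharp}$ under \eqref{viadelcampo'} to obtain \eqref{pollon4}. Your explicit recombination of the $D_\omega$-tail with the $\Pi_{\mathcal R}R^{(\tN)}$ series into the coefficients $\tfrac{k}{(k+1)!}$ is just a compact rewriting of the cancellation the paper carries out term by term.
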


\begin{proof}
Recalling \eqref{Hstep}
we define
\begin{equation}\label{omoeq2}
\begin{aligned}
&Z_+:=\sum_{\td=1}^{\tN}Z_{+}^{(\td)}\,,\quad Z_{+}^{(\td)}:=Z^{(\td)}\,,\quad
1\leq\td\leq\tN-1\,,
\qquad
Z_{+}^{(\tN)}:=\Pi_{\mathcal{K}}R^{(\tN)}\,.
\end{aligned}
\end{equation}
By \eqref{omoeq2}, \eqref{def:epsiepsi}, 
\eqref{fame} and \eqref{bound proiezione} we deduce that 
$Z_{+}\in \cK^\wc_{\ri}(\th_{\twi})\cap\mathcal{H}^{(\leq \tN)}$
and satisfies the bound \eqref{signorinabis}.
Let
\begin{equation}\label{omoeq1}
S:=L_{\omega}^{-1}(\Pi_{\mathcal{R}}R^{(\tN)})
\end{equation}
be the unique solution of the homological equation 
$L_{\omega}S=\{S,D_{\omega}\}=\Pi_{\mathcal{R}}R^{(\tN)}$.
By Proposition \ref{shulalemma} we have that 
$S\in \mathcal{R}_{r}(\th_{\tw'})\cap \mathcal{H}^{(\tN)}$
and satisfies the estimate
\begin{equation}\label{cavolfiore}
|S|_{r,\mathtt{w}'}\leq J_0^{\star} |R^{(\tN)}|_{r,\mathtt{w}}
\stackrel{\eqref{def:epsiepsi}}{\leq}J_0^{\star}\epsilon_{\tN}\,,
\end{equation}
where $J_0^{\star}=J_0^{\SE}(\s,\tN)$  in \eqref{controlJ0}
(respectively  $J_0^{\star}=J_0^{\SO}(\zeta,\tN)$  in \eqref{controlJ0caseSob}).
We now apply Lemma \ref{ham flow} with
$(r,\tw)\rightsquigarrow(\rf,\twf)$ and $\rho:=r-\rf.$
Note that \eqref{viadelcampo} and \eqref{cavolfiore}
imply \eqref{stima generatrice}.
Setting  $\Phi:=\Phi_S^1$
we have that the conjugated Hamiltonian reads 
\begin{align*}
H\circ\Phi
&= D_\omega + \sum_{\td=1}^{\tN-1}Z^{(\td)} 
+  \Pi_{\mathcal{K}}R^{(\tN)}+\{D_{\omega},S\}+\Pi_{\mathcal{R}}R^{(\tN)}
+(e^{L_{S}}-\id -\{\cdot,S\}) D_\omega  
\\&+ (e^{L_{S}}-\id)(\sum_{\td=1}^{\tN-1}Z^{(\td)} +R^{(\tN)})
+e^{L_{S}}\big(\sum_{\td=\tN+1}^{\tK}R^{(\td)}+R^{(\geq \tK+1)}\big)
\\&
\stackrel{{\eqref{omoeq1},\eqref{omoeq2}}}{=}
 D_\omega + Z_{+}+R_{+}\,,
\end{align*}
where
\[
\begin{aligned}
R_{+}&:=  - \sum_{j=2}^\infty\frac{\pa{L_{S}}^{j-1}}{j!} \Pi_{\mathcal{R}}R^{(\tN)}
\\&
+ (e^{L_{S}}-\id)(\sum_{\td=1}^{\tN-1}Z^{(\td)}
+\sum_{\td=\tN}^{\tK}R^{(\td)}+R^{(\geq \tK+1)})
+\sum_{\td=\tN+1}^{\tK}R^{(\td)}+R^{(\geq \tK+1)}\,.
\end{aligned}
\]
Therefore we have
\begin{equation}\label{Rplus}
\begin{aligned}
R_{+}&:=\sum_{p=\tN+1}^{\tK}R_{+}^{(p)}+R_{+}^{(\geq\tK+1)}\,,
\\
R_{+}^{(p)}&:=R^{(p)}+\sum_{\substack{j\geq2  \\ j\tN+\tN=p}}
\frac{\pa{L_{S}}^{j-1}}{j!} \Pi_{\mathcal{R}}R^{(\tN)}
+
\sum_{\substack{1\leq j,\td\leq \tK \\j\tN+\td=p}}
\frac{\pa{L_{S}}^{j}}{j!} Z^{(\td)}
+
\sum_{\substack{1\leq j\leq \tK \\\tN\leq\td\leq \tK\\j\tN+\td=p}}
\frac{\pa{L_{S}}^{j}}{j!} R^{(\td)}
\end{aligned}
\end{equation}
and $R_{+}^{(\geq\tK+1)}$ defined by difference.
Moreover by, the explicit formul\ae\,\eqref{Rplus}, Lemma \ref{ham flow}, bounds
\eqref{cavolfiore}, \eqref{bound proiezione}, 
the smallness assumption
\eqref{viadelcampo}, Remark \ref{rmk:scalaPoi} 
and the monotonicity property (see Proposition \ref{crescenza}) 
we get $R_{+}\in \mathcal{R}^\wc_{\ri'}(\th_{\twi'})\cap\mathcal{H}^{(> \tN)}$
which satisfies \eqref{patata}-\eqref{signorina}.\\
Finally, let us assume \eqref{viadelcampo'}.
 By Proposition \ref{shulalemma}
 let $S^\sharp= L_\omega ^{-1} \Pi_{\mathcal{R}}R^{(\tN)}$ in 
 $\cR_{r}(\th_{\tw^\sharp})$ be the 
  solution of the homological equation 
 $L_\omega S^\sharp = \Pi_{\mathcal{R}}R^{(\tN)}$ on 
  $B_{r}(\th_{\tw^\sharp})\subseteq B_{r}(\th_{\twf})$ for any $\tw^\sharp \geq \tw'$.
 Since $S$ and $S^\sharp$ solve the same linear
 equation on 
 $B_{r}(\th_{\tw^\sharp})$, we have that
 $$
 S^\sharp=S_{\big| B_{r}(\th_{\tw^\sharp})}\,.
 $$ 
By Proposition \ref{shulalemma} we get
\begin{equation}\label{cavolfiore'}
\abs{S}_{r,\tw^\sharp}
\leq 
\widetilde{J_0}^{\star} | R^{(\tN)}|_{r,\twi}\,.	
\end{equation} 
We now apply Lemma \ref{ham flow} with
$(r,\tw)\rightsquigarrow(r,\tw^\sharp)$ 
and $\rho:=r-\rf.$
Note that \eqref{viadelcampo'} and \eqref{cavolfiore'}
imply \eqref{stima generatrice}.
Then \eqref{pollon4}
follows by \eqref{pollon} and \eqref{cavolfiore'}.
\end{proof}

\section{The iterative scheme}
Here we apply repeatedly Lemma \ref{dolcenera}. 
Let $\bar{r},s_0,p>0$, $0<\gamma<1$, fix a natural number $\tK\ge 1$
and
define $\tw_0:=\tw(s_0,p)$ (resp. $\tw_0:=\tw(p)$).
Consider a Hamiltonian $H$ such that $H - D_\omega\in\cH_{\bar{r}}(\th_{\tw_{0}})$ 
satisfying
\begin{equation}\label{hamIniz}
\begin{aligned}
&H:=D_{\omega}+R_0\,,\qquad R_0=\sum_{\td=1}^{\tK}R_0^{(\td)} + R_0^{(\geq\tK+1)}\,,
\\&
R_0^{(\td)}\in \mathcal{H}^\wc_{\bar{r}}(\th_{\tw_{0}})\cap\mathcal{H}^{(\td)}\,,
\qquad
R_0^{(\geq\tK+1)}\in \mathcal{H}^\wc_{\bar{r}}(\th_{\tw_{0}})\cap\mathcal{H}^{(\geq\tK+1)}\,,
\end{aligned}
\end{equation}
where $D_{\omega} $ is in \eqref{quadraticBeam}, 
$\omega\in D_{\gamma}$ (see \eqref{diofSet}). 
Consider the constant $\tC>0$ provided by Proposition \ref{shulalemma}
and define 
\begin{equation}\label{conditionRstar}
\begin{aligned}
r_0^{\star} &:= \min \set{
\bar r, (\frac{4^{\tK+3}|R_0|_{\bar{r},\tw_{0}}}{\bar{r}}  J_{\tK}^{\star}
32e\tK)^{-1}}\,,
\qquad \star\in\{\SE, \SO\}\,,
\\&
J_{\tK}^{\SE}:= \gamma^{-4\tK}
\exp\big(e^{\frac{\tK^2\tC}{s_0}} \big)\,,
\qquad 
J_{\tK}^{\SO}:=\gamma^{-4\tK} \exp\big(\mathtt{C} 2^{12} \tK^3\big)\,.
\end{aligned}
\end{equation}

\begin{remark}\label{rescalingHaminiziale}
Without loss of generality we can always assume that $|R_0|_{\bar{r},\tw_{0}}\leq1$.
Indeed if $|R_0|_{\bar{r},\tw_{0}}>1$ one can choose
$\widetilde{r}<\bar{r}$ such that (recall Lemma \ref{gasteropode})
\[
|R_0|_{\widetilde{r},\tw_{0}}\leq \left(\frac{\widetilde{r}\,}{\bar{r}}\right)
|R_0|_{\bar{r},\tw_{0}}\leq 1\,.
\]
\end{remark}
The main result of this section is the following.
\begin{theorem}{\bf (Birkhoff normal form).}\label{mainthm:BNF}
Consider $H$ in \eqref{hamIniz} .
Then, for any $0<r_0\leq r^{\star}_0$
with $\star\in\{\SE, \SO\}$,
there exists a symplectic map
\begin{equation}\label{sonno1}
\begin{aligned}
&{\bf \Phi} \; : \; B_{\frac{r_0}{2}}(\th_{\tw_{f}})\ \to\
B_{r_{0}}(\th_{\tw_{f}})
\\&
\sup_{u\in  B_{\frac{r_0}{2}}(\th_{\tw_{f}})} 
\norm{{\Phi}(u)-u}_{\th_{\tw_{f}}}
\le
\tC_1^{\star} r_0^{2}\leq \frac{r_0}{8}\,,
\qquad \tC_1^{\star}:= \frac{|R_0|_{\bar{r},\tw_{0}}}{\bar{r}} 
J_{\tK}^{\star}\,,
\end{aligned}
\end{equation}
where $\tw_{f}:=\tw(\tfrac{3}{2}s_0,p)$ (resp. $\tw_{f}=\tw(p+\zeta)$ with $\zeta=36^2\sum_{i=1}^{\tK}i^2$)   
such that the following holds.
The Hamiltonian 
\begin{equation}\label{sonno2}
\begin{aligned}
&H_{f}:=H\circ{\bf \Phi}:=
D_{\omega}+\mathfrak{Z}+\mathfrak{R}\,,
\\& 
\mathfrak{Z}\in  \cK^\wc_{\frac{r_0}{2}}(\th_{\twi_f})\cap\mathcal{H}^{(\leq \tK)}\,,
\qquad
\mathfrak{R}\in  \cH^\wc_{\frac{r_0}{2}}(\th_{\twi_f})\cap\mathcal{H}^{(\geq\tK+1)}
\end{aligned}
\end{equation}
satisfies
\begin{equation}\label{sonno3}
|\mathfrak{Z}|_{\frac{r_0}{2},\twi_f}\leq \mathtt{C}_2^{\star}r_0^2\,,
\qquad\;
|\mathfrak{R}|_{\frac{r_0}{2},\twi_f}\leq\mathtt{C}_3^{\star} r_0^{\tK+1}
\end{equation}
with
\[
\mathtt{C}_2^{\star}:=\frac{16 e\tK  \abs{R_0}^{\wc}_{\bar{r},\tw}  
4^{\tK+1}}{\bar{r}^2}J_{\tK}^{\star}\,,
\qquad \;\;\;
\mathtt{C}_3^{\star}:=
\frac{\abs{R_0}^{\wc}_{\bar{r},\tw} (16 e\tK   4^{\tK+2})^\tK}{\bar{r}^{\tK+1}}
(J_{\tK}^{\star})^{\tK}\,.
\]
\end{theorem}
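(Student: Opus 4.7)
The plan is to iterate Lemma \ref{dolcenera} exactly $\tK$ times, eliminating at the $j$-th step the non-resonant part of the homogeneous component of scaling degree $j$. First I would fix telescoping parameters: set radii $r_j := r_0\bigl(1 - j/(2\tK)\bigr)$ for $j=0,\ldots,\tK$, so that $r_\tK = r_0/2$ and $r_{j-1}-r_j = r_0/(2\tK) =: \rho$, and interpolate the weights linearly. In case $(\SE)$ I set $s_j := s_0\bigl(1+j/(2\tK)\bigr)$ and $\tw_j := \tw(s_j,p)$, so that $\tw_\tK = \tw_f$; the $j$-th step is invoked with $\s_j := s_0/(2\tK)$. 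In case $(\SO)$ I set $p_j := p+36^2\sum_{i=1}^{j}i^2$ and $\tw_j:=\tw(p_j)$, so the $j$-th step is invoked with $\zeta_j := (36j)^2$, meeting the threshold required by Proposition \ref{shulalemma}. By construction the small-divisor constant used at step $j$, $J_0^{\star}(\s_j,j)$ or $J_0^{\star}(\zeta_j,j)$, is dominated by the quantity $J_\tK^{\star}$ appearing in \eqref{conditionRstar} (up to a redefinition of the universal constant $\tC$).

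The core argument is an induction on $j=0,\ldots,\tK$ showing that the output Hamiltonian
\[
H_j = D_\omega + \sum_{\td=1}^{j} Z^{(\td)} + \sum_{\td=j+1}^{\tK} R_j^{(\td)} + R_j^{(\geq \tK+1)}
\]
lies in $\cH_{r_j}(\th_{\tw_j})$ and satisfies the quantitative bounds
\[
|R_j^{(\td)}|_{r_j,\tw_j} \le 2^{j}|R_0^{(\td)}|_{\bar r,\tw_0}\,(r_j/\bar r)^{\td}, \qquad |Z^{(\td)}|_{r_j,\tw_j} \le 2^{j}|R_0^{(\td)}|_{\bar r,\tw_0}\,(r_j/\bar r)^{\td},
\]
exploiting the scaling property in Lemma \ref{gasteropode}(ii) to get the factor $(r_j/\bar r)^{\td}$. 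To apply Lemma \ref{dolcenera} at the $j$-th step one must verify the smallness condition \eqref{viadelcampo}: by the inductive bounds the left-hand side is at most $J_\tK^{\star}\cdot 2^{j+1}|R_0|_{\bar r,\tw_0}(r_{j-1}/\bar r)^{j}$, which by the choice $r_0 \le r_0^{\star}$ in \eqref{conditionRstar} is bounded by $\delta = \rho/(16e(r_{j-1}))$. The estimates \eqref{signorinabis}--\eqref{signorina} then propagate the inductive hypothesis because the geometric ratio $\epsilon_{\tN}J_0^{\star}/(2\delta) \le 1/2$, so the finite sums in \eqref{patata}--\eqref{signorina} converge with a universal multiplicative factor that can be absorbed into the $2^{j}$ prefactor without compromising the $(r_j/\bar r)^{\td}$ scaling.

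For the symplectic map, I would set $\mathbf{\Phi} := \Phi^{(1)}\circ\cdots\circ\Phi^{(\tK)}$ with $\Phi^{(j)}$ the map produced at the $j$-th step. Well-posedness on $B_{r_0/2}(\th_{\tw_f})$ follows from the chain of embeddings $B_{r_j}(\th_{\tw_f})\subseteq B_{r_j}(\th_{\tw_j})$ (since $\tw_f\ge \tw_j$) together with the target property of Lemma \ref{dolcenera}. To control $\|\mathbf{\Phi}-\mathrm{id}\|_{\tw_f}$, I would invoke the second part of Lemma \ref{dolcenera} with $\s^{\sharp}$ (resp.\ $\zeta^{\sharp}$) chosen so that $\tw_j$ enhanced by $\s^{\sharp}$ equals $\tw_f$; the smallness condition \eqref{viadelcampo'} holds again thanks to the $J_\tK^{\star}$-based choice of $r_0^{\star}$. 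Then \eqref{pollon4} gives
\[
\sup_{u\in B_{r_j}(\th_{\tw_f})}\|\Phi^{(j)}(u)-u\|_{\tw_f} \le r_{j-1}\, J_\tK^{\star}\,|R_{j-1}^{(j)}|_{r_{j-1},\tw_{j-1}} \lesssim \frac{|R_0|_{\bar r,\tw_0}}{\bar r}J_\tK^{\star}\, r_0^{j+1},
\]
and the telescoping sum over $j=1,\ldots,\tK$ yields the claimed bound $\mathtt C_1^{\star}r_0^2$. Defining $\mathfrak Z := \sum_{\td=1}^{\tK} Z^{(\td)}$ and $\mathfrak R := R_\tK^{(\geq \tK+1)}$ and summing the inductive bounds give \eqref{sonno3}: the factor $\mathtt C_2^{\star}$ on $\mathfrak Z$ comes from summing over $\td\le \tK$ the leading contribution $\approx |R_0^{(\td)}|(r_0/\bar r)^{\td}$, while $\mathtt C_3^{\star}$ on $\mathfrak R$ comes from tracking the $\tK$ compositions of \eqref{signorina}, each multiplying by a factor of order $J_\tK^{\star} |R_0|_{\bar r,\tw_0}/\bar r$.

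The main obstacle is the bookkeeping across the $\tK$ iterations: the bound \eqref{patata} couples the already-normalized pieces $Z^{(\td)}$ with powers of the generator $S$ at the current step, producing cross-terms that mix all previous data. One must verify that these mixing contributions stay controlled by the inductive hypothesis, i.e.\ that the accumulated combinatorial factors (factorials from the Lie series, binomial counts of admissible decompositions $j\tN+\td=p$, geometric ratios $\epsilon_\tN J_0^{\star}/2\delta$) do not ruin the required $(r_0/\bar r)^{\td}$ scaling. The crucial structural fact that makes this bookkeeping work is that the worst small-divisor constant, $J_\tK^{\star}$, is fixed once and for all in the definition of $r_0^{\star}$, so the smallness condition \eqref{viadelcampo} tightens automatically with $j$ rather than deteriorating.
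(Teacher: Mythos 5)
Your overall strategy --- iterating the single normal form step of Lemma \ref{dolcenera} along telescoping radii $r_j=r_0(1-j/(2\tK))$ and telescoping weights, then composing the maps and summing the displacement estimates via \eqref{pollon4} --- is exactly the paper's (Lemma \ref{lem:iterazione} followed by a short concluding argument), and your parameter choices, final decomposition $\mathfrak Z=\sum Z^{(\td)}$, $\mathfrak R=R_\tK^{(\ge\tK+1)}$, and treatment of $\|\mathbf\Phi-\id\|$ all match.

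The genuine gap is in the inductive hypothesis. The bound $|R_j^{(\td)}|_{r_j,\tw_j}\le 2^{j}|R_0^{(\td)}|_{\bar r,\tw_0}(r_j/\bar r)^{\td}$ cannot propagate: the new degree-$p$ remainders produced by \eqref{patata} contain terms of the form $\tfrac{\epsilon_{\tN}}{j!}\bigl(\epsilon_{\tN}J_0^{\star}/(2\delta)\bigr)^{j-1}$ with $j\tN=p$, i.e.\ they carry one factor of the small-divisor constant for each unit of scaling degree gained. Since $J_0^{\star}\delta^{-1}\gg 1$, these factors cannot be ``absorbed into the $2^{j}$ prefactor'' as you assert: for instance, with $\tN=1$ the remainder $R_1^{(2)}$ acquires a contribution of size $\e^{2}\mathtt{R}_0^{2}J_1\delta_0^{-1}$, which is nowhere near $O(\e^{2}\mathtt{R}_0)$ --- the smallness condition only guarantees $\e J_\tK\delta_0^{-1}\lesssim 1$, not $J_\tK\delta_0^{-1}\lesssim 1$. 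The inductive bound must record these accumulating factors; the paper's ${\bf (S3)}_k$ reads $|R_k^{(\td)}|_{r_k,\tw_k}\le \e^{\td}\mathtt{R}_0(4^{k}J_\tK\delta_0^{-1})^{\td-1}2^{k-1}$, with the exponent $\td-1$ counting the homological equations solved to produce a degree-$\td$ term. With that hypothesis the condition \eqref{cond:small} (equivalently \eqref{conditionRstar}) makes $\e\,4^{k}J_\tK\delta_0^{-1}\le 1$, the series in \eqref{patata}--\eqref{signorina} sum to a harmless factor, and the induction closes; it is also precisely the origin of the $(J_\tK^{\star})^{\tK}$ appearing in $\mathtt C_3^{\star}$, which your final statement correctly reproduces but your intermediate bound contradicts. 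As stated, your induction fails at the first step and must be repaired by inserting the $(J_\tK\delta_0^{-1})^{\td-1}$ weights.
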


\noindent
The proof of Theorem \ref{mainthm:BNF} is based on the following 
 iterative scheme.

\vspace{0.3em}
\noindent
\textbf{Setting of parameters.} For any $0\le k \le \tK$, let us recursively define: 
\begin{equation}\label{parametri}
\begin{aligned}
&r_k= r_0(1- \frac{k}{2\tK})\,,
\qquad  
\delta_k = \frac{r_{k} - r_{k+1}}{16 e r_{k}}\,,
\qquad 
s_k = s_0(1 + \frac{k}{2\tK})\,,
\quad \zeta_k:= (36 k)^2
\\
 &\s_k:=s_0 \frac{k}{2\tK}\,,
 \qquad 
 \tw_k:=\tw(s_k,p)\,,
 \qquad 
 {({\rm resp.} \; \tw_k:=\tw(p+\sum_{i=1}^{k}\zeta_i))}\,.
\end{aligned}
\end{equation}

Moreover, let us define 
\begin{equation}\label{patata2}
\eps:= \pa{\frac{r_0}{\bar{r}}}\,,
\qquad
\mathtt{R}_0:=|R_0|_{\bar{r},\tw_0}\,.
\end{equation}
By Lemma \ref{gasteropode}
and Remark \ref{rescalingHaminiziale} we have that
\begin{equation}\label{scala0}
\abs{R_0^{(\td)}}^{\wc}_{r_0,\tw_{0}}\leq \e^{\td} \mathtt{R}_0\,,
\qquad  1\leq \td\leq \tK\,,
\qquad
\abs{R_0^{(\geq\tK+1)}}^{\wc}_{r_0,\tw_{0}}\leq  \e^{\tK+1} \mathtt{R}_0\,.
\end{equation}
Let us introduce 
\begin{equation}\label{arancia1}
 J_k:=J_0^{\SE}(\sigma_k, k)\,, \quad ({\rm resp.} \;\;  
 J_k:=J_0^{\SO}(\sum_{i=1}^{k}\zeta_i, k))\qquad 0\le k \le \tK\,,
\end{equation}
where $J_0^{\SE}$, $J_0^{\SO}$ are introduced in \eqref{controlJ0}, \eqref{controlJ0caseSob}
respectively, 
and assume the following smallness  condition:
\begin{equation}\label{cond:small}
\mathtt{R}_0{4^{\tK+3}} J_{\tK} \,\eps \leq \delta_0\,.
\end{equation}
We now prove the following.
\begin{lemma}{\bf (Iteration lemma).}\label{lem:iterazione}
The following holds true for any $0\leq k\leq \tK$:

\vspace{0.3em}
\noindent
${\bf (S1)}_k$ there are Hamiltonians $H_{k}$ of the form
\begin{equation}\label{hamk}
\begin{aligned}
&H_k= D_\omega +Z_{k} +R_{k}\,,
\\&
Z_{k}:= \sum_{\substack{1\leq\td\leq k\\ \td\; {\rm even} }}Z_{k}^{(\td)}\,,\qquad
 R_{k}:= \sum_{\td= k+1 }^{\tK}R_{k}^{(\td)}+
R_{k}^{(\geq\tK+1)}\,, 
\end{aligned}
\end{equation}
where $Z_0\equiv 0$, and 
\begin{equation}\label{hamk2}
\begin{aligned}
 Z^{(\td)}_{k}&\in \cK^\wc_{\ri_k}(\th_{\twi_k})\cap\mathcal{H}^{(\td)}\,,
\qquad 1\leq \td\leq k
\\
R_k^{(\td)}&\in \mathcal{H}^\wc_{\ri_k}(\th_{\twi_k})\cap\mathcal{H}^{( \td)}\,,
\qquad k+1\leq \td\leq \tK\,,
\\
R_k^{(\geq\tK+1)}&\in \mathcal{H}^\wc_{\ri_k}(\th_{\twi_k})\cap\mathcal{H}^{( \geq\tK+1)}\,;
\end{aligned}
\end{equation}

\noindent
${\bf (S2)}_k$ one has, for $1\leq k\leq \tK$,
\begin{equation}\label{smallcondk}
J_k
\left(
\sum_{\td=k}^{\tK}|R_{k-1}^{(\td)}|_{r_{k-1},\tw_{k-1}}+
|R_{k-1}^{(\geq\tK+1)}|_{r_{k-1},\tw_{k-1}}
\right)
\le \delta_{k-1}\,;
\end{equation}

\noindent
${\bf (S3)}_k$ one has, for $1\leq k\leq \tK$,
\begin{align}
\norm{Z_k^{(\td)}}_{r_k,\tw_k} &\leq \e^{\td}
\mathtt{R}_0(4^k J_{\tK}\delta_0^{-1})^{\td-1}
2^{\td-1}\,,
\qquad 1\leq \td\leq k\,,
\label{smallnormk1}
\\
\norm{R_{k}^{(\td)}}_{r_k,\tw_k} &\le \e^{\td}
\mathtt{R}_0({4^k} J_{\tK}\delta_0^{-1})^{\td-1}
{2^{k-1}}\,,
\quad
k+1\leq \td\leq \tK\,,
\label{smallnormk2}
\\
\norm{R_{k}^{(\geq\tK+1)}}_{r_k,\tw_k} &\le \e^{\tK+1}\mathtt{R}_0
(4^k J_{\tK}\delta_0^{-1})^{\tK}
2^{k}\,;
\label{smallnormk3}
\end{align}

\noindent
${\bf (S4)}_k$  for any $1\leq k\leq \tK$ there are maps
$\Phi_k \; : \; B_{r_k}(\th_{\tw_k})\ \to\ 
B_{r_{k-1}}(\th_{\tw_k})$
such that
\begin{equation}\label{hamkkk}
H_k=H_{k-1}\circ \Phi_k\,.
\end{equation}
Moreover, for any $k\leq n\leq \tK$,  
 one has
\begin{equation}\label{sepultura2}
\Phi_k \; : \; B_{r_k}(\th_{\tw_n})\ \to\ 
B_{r_{k-1}}(\th_{\tw_n})
\end{equation}
with 
\begin{equation}\label{stimaPhiK}
\sup_{u\in  B_{r_k}(\th_{\tw_{n}})} \norm{\Phi_k(u)-u}_{{\tw_{n}}}
\le 
r_{k-1}\mathtt{R}_0\frac{1}{2^k} J_{\mathtt{K}}\e\,.
\end{equation}
\end{lemma}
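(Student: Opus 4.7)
The proof proceeds by finite induction on $k$ from $0$ to $\tK$, with the four statements ${\bf (S1)}_k$--${\bf (S4)}_k$ established simultaneously. The base case $k=0$ is immediate: set $H_0:=H$, so ${\bf (S1)}_0$ coincides with \eqref{hamIniz}, while ${\bf (S2)}_0$--${\bf (S4)}_0$ are not claimed (they start at $k=1$). For the inductive step, assuming all four statements at level $k-1$, the Hamiltonian $H_{k-1}$ has exactly the structure \eqref{Hstep} required by Lemma \ref{dolcenera}, with the identification $\tN=k$, $(r,\tw)=(r_{k-1},\tw_{k-1})$, $(\rf,\twf)=(r_k,\tw_k)$; the constant $J_0^{\star}$ of Proposition \ref{shulalemma} then equals $J_k$ as defined in \eqref{arancia1}. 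Granting the smallness hypothesis, an application of the step lemma produces $\Phi_k$ and $H_k=H_{k-1}\circ\Phi_k$ of the form \eqref{HstepPIUUNO}, from which ${\bf (S1)}_k$ and the basic case $n=k$ of ${\bf (S4)}_k$ can be directly read off.

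The smallness hypothesis \eqref{viadelcampo} of Lemma \ref{dolcenera} is \emph{precisely} ${\bf (S2)}_k$, so it must be verified using ${\bf (S3)}_{k-1}$. Inserting \eqref{smallnormk2}--\eqref{smallnormk3} into the left-hand side of \eqref{smallcondk} and summing the resulting geometric progression in $\e$ produces a quantity controlled by a universal multiple of $\mathtt{R}_0 J_\tK \e^k 2^{k-1}$, which the choice $r_0\le r_0^\star$ together with \eqref{cond:small} forces to be bounded by $\delta_{k-1}$. The uniformity $\delta_k=(32 e\tK)^{-1}$, ensured by the linear refinement $r_k=r_0(1-k/(2\tK))$, together with the bound $J_k\le J_\tK$ stemming from \eqref{conditionRstar}, are essential in order to avoid any degradation of the smallness threshold as $k$ grows.

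Once Lemma \ref{dolcenera} is applied, ${\bf (S3)}_k$ follows by substitution in its conclusions \eqref{signorinabis}--\eqref{signorina}. For $\td<k$ the identity $Z_k^{(\td)}=Z_{k-1}^{(\td)}$ combined with Proposition \ref{crescenza} lifts \eqref{smallnormk1} from step $k-1$ to step $k$ at the price of changing $4^{k-1}$ to $4^k$; the new term $Z_k^{(k)}=\Pi_{\cK} R_{k-1}^{(k)}$ obeys \eqref{smallnormk1} thanks to \eqref{fame} and the inductive bound on $R_{k-1}^{(k)}$, and it vanishes identically whenever $k$ is odd by Remark \ref{oddKerham}, whence the restriction of the sum in \eqref{hamk} to even $\td$. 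The bounds \eqref{smallnormk2}--\eqref{smallnormk3} on $R_k^{(\td)}$ and $R_k^{(\geq\tK+1)}$ follow by inserting ${\bf (S3)}_{k-1}$ into the three sums in \eqref{patata} and the corresponding sum in \eqref{signorina}, using \eqref{cond:small} to control each power of $\e J_\tK/\delta_0$, and absorbing the $1/j!$ and the combinatorial multiplicity of splittings $p=j\tN+\td$ into the factor $2^{k-1}$.

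Finally, for the extended range $n>k$ in ${\bf (S4)}_k$ I invoke the second part of Lemma \ref{dolcenera} with $\sigma^\sharp:=s_n-s_k$ (respectively $\zeta^\sharp:=\sum_{i=k+1}^{n}\zeta_i$), so that $\tw^\sharp=\tw_n$; the extra smallness \eqref{viadelcampo'} is again implied by \eqref{cond:small} after observing $\widetilde{J_0}^\star\le J_\tK$, and \eqref{pollon4} then yields \eqref{sepultura2} together with $\norm{\Phi_k(u)-u}_{\tw_n}\le r_{k-1} J_\tK |R_{k-1}^{(k)}|_{r_{k-1},\tw_{k-1}}$, which after inserting the inductive estimate on $R_{k-1}^{(k)}$ and using \eqref{cond:small} one more time collapses to \eqref{stimaPhiK}. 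The main obstacle of the argument is the combinatorial bookkeeping in ${\bf (S3)}_k$: one has to check that the small divisor $J_\tK$ appears in the $R_k^{(\td)}$-bound only to the power $\td-1$ (and in $R_k^{(\geq\tK+1)}$ to the power $\tK$), rather than to a power growing like $\tK\cdot k$, which would be incompatible with the stability times claimed in Theorems \ref{main:sobol} and \ref{main:subexp}. This sharp tracking is made possible by the encoding \eqref{persico} of the weight-dependence into the coefficients $c^{(j)}_{r,\tw}(\al,\bt)$, so that the Lie-expansion estimate \eqref{brubeck} yields a geometric series with ratio bounded by a small constant independent of the step.
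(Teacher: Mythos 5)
Your proposal is correct and follows essentially the same route as the paper: induction on $k$, verification of the smallness hypothesis \eqref{viadelcampo} (i.e.\ ${\bf (S2)}_k$) from the previous level's ${\bf (S3)}$ bounds plus \eqref{cond:small}, application of Lemma \ref{dolcenera} to produce $\Phi_k$ and the new decomposition, and the second part of that lemma with $\tw^{\sharp}=\tw_n$ for the extended mapping property in ${\bf (S4)}_k$. The only nitpick is that $\delta_k=\tfrac{1}{32e\tK(1-k/(2\tK))}$ is not constant but merely bounded below by $\delta_0=(32e\tK)^{-1}$, which is all the argument needs.
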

\begin{proof}
We reason inductively and
 apply iteratively Lemma \ref{dolcenera}.
Assume that  ${\bf (Si)}_k$, ${\bf i}=1,\ldots,4$,
hold for $0\leq k\leq \tK-2$.
We claim that
\begin{equation}\label{arancia2}
J_{k+1}\left(\sum_{\td=k+1}^{\tK}|R_{k}^{(\td)}|_{r_k,\tw_k}+
|R_{k}^{(\geq\tK+1)}|_{r_k,\tw_k}\right)\leq \delta_k\,,
\end{equation}
which is condition ${\bf (S2)}_{k+1}$
(see 
\eqref{smallcondk} with $k\rightsquigarrow k+1$).
First of all notice that (recall \eqref{controlJ0}, \eqref{parametri})
\begin{equation}\label{dolcenera3}
J_{k+1}\leq J_{\tK}\,,\qquad \forall\, 0\leq k\leq \tK-1\,.
\end{equation}
By the inductive assumption ${\bf (S3)}_k$-\eqref{smallnormk2}  (see \eqref{hamk})
one can note that
\begin{equation*}
\begin{aligned}
\sum_{\td=k+1}^{\tK}&|R_{k}^{(\td)}|_{r_k,\tw_k}+
|R_{k}^{(\geq\tK+1)}|_{r_k,\tw_k}
\\&
\leq \sum_{\td=k+1}^{\tK}\mathtt{R}_0
\e^{\td}(4^{k}J_{\tK}\delta_0^{-1})^{\td-1}2^{k-1}
+\mathtt{R}_0\e^{\tK+1}(4^{k}J_{\tK}\delta_0^{-1})^{\tK}2^{k}
\\&
\leq 2^{k-1}\mathtt{R}_0
\e^{k+1}(4^{k}J_{\tK}\delta_0^{-1})^{k}
\left[
\sum_{j=0}^{\tK-(k+1)}
\e^{j}(4^{k}J_{\tK}\delta_0^{-1})^{j}+
\e^{\tK-k}(4^{k}J_{\tK}\delta_0^{-1})^{\tK-k}2
\right]
\\&\stackrel{\eqref{cond:small}}{\leq}
\mathtt{R}_0 \e^{k+1}(4^{k}J_{\tK}\delta_0^{-1})^{k} 2^{k}\,.
\end{aligned}
\end{equation*}
Recalling \eqref{arancia1}, \eqref{controlJ0},
we deduce that
\[
\begin{aligned}
J_{k+1}\Big(
\sum_{\td=k+1}^{\tK}|R_{k}^{(\td)}|_{r_k,\tw_k}
&+
|R_{k}^{(\geq\tK+1)}|_{r_k,\tw_k}
\Big)
\leq\mathtt{R}_0\e^{k+1}(4^{k}J_{\tK}\delta_0^{-1})^{k} 2^{k}J_{k+1}
 \\&
 \leq\mathtt{R}_0
 ( \e 4^{k}J_{\tK}\delta_0^{-1})^{k+1} 2^{k} 4^{-k}\delta_0
  \stackrel{\eqref{cond:small}}{\leq}
  \delta_0\stackrel{\eqref{parametri}}{\leq}\delta_k\,,
\end{aligned}
\]
which proves the claim. Condition \eqref{arancia2} 
implies the smallness assumption \eqref{viadelcampo}
by setting
\[
\epsilon_{\td}:=|R_{k}^{(\td)}|_{r_k,\tw_k}\,,
\qquad
\epsilon_{\tK+1}:=|R_{k}^{(\geq\tK+1)}|_{r_k,\tw_k}\,.
\]
Therefore Lemma \ref{dolcenera} applies
with 
\begin{equation}\label{dolcenera2}
\tN,\s, r, r', \tw,\tw'\rightsquigarrow k+1, \s_k, r_k, r_{k+1}, \tw_k,\tw_{k+1}\,, 
\quad
Z^{(\td)},R^{(\td)},R^{(\geq\tK+1)} \rightsquigarrow Z^{(\td)}_{k}, R_{k}^{(\td)}, R_{k}^{(\geq\tK+1)}\,.
\end{equation}
Then we obtain a map $\Phi_{k+1}$ satisfying 
\eqref{hamkkk}
with $k\rightsquigarrow k+1$ (see \eqref{pollon3}-\eqref{HstepPIUUNO}).
In particular the new hamiltonian $H_{k+1}$ has the form
\eqref{hamk} (with $k\rightsquigarrow k+1$)
for some Hamiltonians $Z_{k+1}, R_{k+1}$.
This proves ${\bf (S1)}_{k+1}$.
Let us check the ${\bf (S3)}_{k+1}$, i.e. 
the bounds \eqref{smallnormk1}-\eqref{smallnormk3}
with $k\rightsquigarrow k+1$.
By \eqref{signorinabis} we have
\begin{equation}\label{def:passo}
Z_{k+1}=\sum_{\td=1}^{k+1}Z^{(\td)}_{k+1}\,,
\qquad
Z^{(\td)}_{k+1}:=Z_{k}^{(\td)}\,,\quad1\leq\td\leq k\,,
\quad
Z^{(k+1)}_{k+1}:=\Pi_{\mathcal{K}}R^{(k+1)}_{k}\,.
\end{equation}
For $1\leq \td\leq k$ one has
\[
|Z_{k+1}^{(\td)}|_{r_{k+1},\tw_{k+1}}\stackrel{\eqref{def:passo}}{=}
|Z_{k}^{(\td)}|_{r_{k+1},\tw_{k+1}}\stackrel{\eqref{smallnormk1}_{k}}{\leq}
\mathtt{R}_0
\e^{\td}(4^k J_{\tK}\delta_0^{-1})^{\td-1}
2^{\td-1}
\leq \mathtt{R}_0\e^{\td}(4^{k+1} J_{\tK}\delta_0^{-1})^{\td-1}
2^{\td-1}
\]
where the last inequality is trivial.
Moreover for $\td=k+1$ one has
\[
|Z_{k+1}^{(k+1)}|_{r_{k+1},\tw_{k+1}}\stackrel{\eqref{def:passo},\eqref{fame}}{\leq}
|R_{k}^{(k+1)}|_{r_{k+1},\tw_{k+1}}
\stackrel{\eqref{smallnormk2}_{k}}{\leq}\mathtt{R}_0
\e^{k+1}(4^k J_{\tK}\delta_0^{-1})^{k}
2^{k-1}\leq\mathtt{R}_0
\e^{k+1}(4^{k+1} J_{\tK}\delta_0^{-1})^{k}
2^{k}\,.
\]
This proves the bound \eqref{smallnormk1} with $k\rightsquigarrow k+1$ on the 
Hamiltonians $Z_{k+1}^{(\td)}$ with $1\leq \td\leq k+1$.
We notice that $Z_{k+1}\equiv0$ when $k+1$ is odd (see Remark \ref{oddKerham}).
We now prove the estimate \eqref{smallnormk2} with $k\rightsquigarrow k+1$
on $R_{k+1}$.
By \eqref{patata}, \eqref{def:epsiepsi}, \eqref{dolcenera2}, 
\eqref{arancia1}, \eqref{dolcenera3}, \eqref{parametri},
and using \eqref{smallnormk1}, \eqref{smallnormk2} and the fact that $\mathtt{R}_0\leq1$,
we get, for any $k+2\leq p\leq \tK$, 
\begin{equation*}
\begin{aligned}
|R_{k+1}^{(p)}|_{r_{k+1},\tw_{k+1}}
&\leq \mathtt{R}_0\e^{p}(4^k J_{\tK}\delta_0^{-1})^{p-1}2^{k-1}
\\&
+\mathtt{R}_0\sum_{\substack{j\geq2  \\(j-1)(k+1)+(k+1)=p}}
\frac{\e^{k+1}(4^k J_{\tK}\delta_0^{-1})^{k}2^{k-1}}{j!} 
\left(\frac{\e^{k+1}(4^k J_{\tK}\delta_0^{-1})^{k}2^{k-1}J_{\tK}}{2\delta_0}\right)^{j-1}
\\&
+\mathtt{R}_0
\sum_{\substack{1\leq j,\td\leq \tK \\j\tN+\td=p}}
\frac{\e^{\td}(4^k J_{\tK}\delta_0^{-1})^{\td-1}2^{k-1}}{j!}
\left(\frac{\e^{k+1}(4^k J_{\tK}\delta_0^{-1})^{k}2^{k-1}J_{\tK}}{2\delta_0}\right)^{j}
	\label{signorinatris}
\\&	
	+\mathtt{R}_0\sum_{\substack{1\leq j\leq \tK \\
	\tN\leq\td\leq \tK
	\\
	j\tN+\td=p}}
	\frac{\e^{\td}(2^k J_{\tK}\delta_0^{-1})^{\td-1}2^{k-1}}{j!}
\left(\frac{\e^{k+1}(2^k J_{\tK}\delta_0^{-1})^{k}2^{k-1}J_{\tK}}{2\delta_0}\right)^{j}
\\&\leq\mathtt{R}_0
\e^{p}(4^k J_{\tK}\delta_0^{-1})^{p-1}2^{k-1}
\\&+\mathtt{R}_0
\e^{p}(4^k J_{\tK}\delta_0^{-1})^{p-1}2^{k-1}
\sum_{j\geq2}\frac{1}{j!}\frac{2^{(k-1)(j-1)}}{2^{(j-1)} 4^{k(j-1)}}
\\&+\mathtt{R}_0
\e^{p}(4^k J_{\tK}\delta_0^{-1})^{p-1}2^{k-1} 2\cdot\sum_{j\geq1}
\frac{1}{j!}\frac{2^{(k-1)j}}{2^{j}4^{kj}}
\\&\leq\mathtt{R}_0
\e^{p}(4^k J_{\tK}\delta_0^{-1})^{p-1}2^{k-1}\Big(1 + \sum_{j\geq2}\frac{1}{j!}
\Big(\frac{2^{k-1}}{2 \cdot 4^{k}}\Big)^{j-1} +
\sum_{j\geq1}\frac{1}{j!}\Big(\frac{2^k}{2\cdot 4^k}\Big)^{j}\Big)
\\&\leq\mathtt{R}_0
\e^{p}(4^k J_{\tK}\delta_0^{-1})^{p-1}2^{k-1}\Big(
\sum_{j\geq0}\frac{1}{j!}\Big(\frac{1}{2^k}\Big)^{j}
\Big)
\\&\leq\mathtt{R}_0
\e^{p}(4^k J_{\tK}\delta_0^{-1})^{p-1}2^{k-1} \sqrt{e}
\leq
\\&\leq\mathtt{R}_0
\e^{p}(4^k J_{\tK}\delta_0^{-1})^{p-1}2^{k}
\leq \mathtt{R}_0
\e^{p}(4^{k+1} J_{\tK}\delta_0^{-1})^{p-1}2^{k}\,,
\end{aligned}
\end{equation*}
which is the \eqref{smallnormk2} with $k\rightsquigarrow k+1$.
The estimate for the term $R_{k+1}^{(\geq\tK+1)}$ follows similarly.

It remains to show the ${\bf (S4)}_{k+1}$, i.e. the bound \eqref{stimaPhiK} with $k\rightsquigarrow k+1$.
By estimate \eqref{pollon4} (recalling again \eqref{dolcenera2}) and taking 
$k\leq n\leq \mathtt{K}$
we get
\[
\begin{aligned}
\sup_{u\in  B_{r_{k+1}}(\th_{\tw_{n}})} 	\norm{\Phi_{k+1}(u)-u}_{{\tw_{n}}}&\leq
r_k J_{k+1}
|R_{k}^{(k+1)}|_{r_k,\tw_k}
\\&
\stackrel{\eqref{smallnormk2}}{\leq} r_k J_{\tK}
\e^{k+1}
\mathtt{R}_0({4^k} J_{\tK}\delta_0^{-1})^{k}
{2^{k-1}}\,,
\end{aligned}
\]
which implies the thesis \eqref{stimaPhiK} using the smallness condition \eqref{cond:small}.
\end{proof}

\begin{proof}[{\bf Proof of Theorem \ref{mainthm:BNF}.} ]
The condition  \eqref{conditionRstar} and  the choice of $\e$ in \eqref{patata2} 
imply the smallness condition \eqref{cond:small}, so the iterative lemma \ref{lem:iterazione}
applies.
By 
\eqref{pollon4}  we have (recall \eqref{parametri})
\begin{equation*}
\Phi_k \; : \; B_{r_k}(\th_{\tw_\tK})\ \to\ 
B_{r_{k-1}}(\th_{\tw_\tK})
\end{equation*}
with 
\begin{equation*}
\sup_{u\in  B_{r_k}(\th_{\tw_{\tK}})} 	\norm{\Phi_k(u)-u}_{{\tw_{\tK}}}
\le 
r_{k-1}\mathtt{R}_0\frac{1}{2^k} J_{\tK}^{\star}\e\,.
\end{equation*}

Then,  defining 
\[
{\bf \Phi}:=\Phi_1\circ\cdots\circ\Phi_{\tK}\,,
\]
we immediately get 
\[
{\bf \Phi} \; : \; B_{\frac{r_0}{2}}(\th_{\tw_{\tK}})\ \to\
B_{r_{0}}(\th_{\tw_{\tK}})
\]
which, adding and subtracting $\id$ to each $\Phi_i$ with $i = 1,\ldots \tK$ entails
$$
{\bf \Phi}  - \id = (\Phi_1 - \id)\circ\Phi_2\circ\cdots\circ \Phi_\tK + (\Phi_2 - \id) \circ \Phi_3 \circ\cdots \circ\Phi_\tK + \cdots + \Phi_\tK - \id\,.
$$
Hence we get  the estimate
\begin{equation*}
\sup_{u\in  B_{\frac{r_0}{2}}(\th_{\tw_{\tK}})}  \norma{{\bf \Phi}(u) - u}_{\tw_\tK} 
\le 
\mathtt{R}_0 J_{\tK}^{\star}\e \sum_{j=0}^{\tK-1} 
\frac{r_j}{2^{j+1}}\le r_0\mathtt{R}_0 J_{\tK}^{\star}\e \,,
\end{equation*}
which
implies \eqref{sonno1}
by using the definitions of $J_{\tK}$, $\e$ and $\mathtt{R}_0$,
the smallness assumption \eqref{conditionRstar} on $r_0$
and setting $\tw_{f}=\tw_{\tK}$.

The new Hamiltonian $H\circ{\bf \Phi}$ is equal to $H_{\tK}$
given in \eqref{hamk} with $k\rightsquigarrow \tK$.
We then set $\mathfrak{Z}:=Z_{\tK}$
and $\mathfrak{R}:=R_{\tK}$.
The \eqref{sonno2} follows.
The bounds \eqref{sonno3} follow by \eqref{smallnormk1}-\eqref{smallnormk3}
recalling \eqref{arancia1}, \eqref{scala0}, \eqref{patata2}.
\end{proof}

 \section{Times of stability}
 In this section we provide the proof of our main stability results for the solutions 
 of the equation \eqref{eq:beam}.
 
 \noindent
 The proof basically rely on a combination of the Birkhoff normal form result 
 of Theorem \ref{mainthm:BNF} and Lemma \ref{tempotempo}.
In view of section 
\ref{sec:hamstructurebeam}
we have that equation \eqref{eq:beam}
is equivalent to \eqref{eq:beamComp}.
In particular, recalling \eqref{beam5},  for any 
$(\psi_0,\psi_1)\in H^{s,p+1}\times H^{s,p-1}$ we define
\begin{equation}\label{claimclaim1}
u_0:=\frac{1}{\sqrt{2}}\Big(\omega^{\frac{1}{2}}\psi_0+{\rm i} \omega^{-\frac{1}{2}}\psi_0\Big)\,.
\end{equation}
One can notice that condition \eqref{main:smallcond} (resp. \eqref{main:smallcondSob})
implies that the function $u_0$ in \eqref{claimclaim1} satisfies
\begin{equation}\label{smallnessu0}
\|u_0\|_{s,p}\leq \delta\,.
\end{equation}
Assume now that  $u(t)$ is  a solution of \eqref{eq:beamComp}, with initial condition
$u(0)=u_0$,  
satisfying 
\begin{equation}\label{claimclaim}
\sup_{t\in[0,T_0]}\|u(t)\|_{s,p}\leq 2\delta 
\qquad {\rm for\;some}\qquad T_0>0\,.
\end{equation}
Hence  the solution $(\psi(t), \partial_t\psi(t))$ of \eqref{eq:beam}
with initial conditions $(\psi_0,\psi_1)$  satisfies the
\emph{a priori} bound
\[
\|\psi(t)\|_{s,p+1}+\|\partial_t\psi(t)\|_{s,p-1}\leq 4\|u(t)\|_{s,p}\leq 8\delta\,,
\qquad \forall\, t\in[0,T_0]\,,
\]
which implies \eqref{boundsol1} (resp. \eqref{boundsol1Sob}).

The discussion above implies that in order to proof Theorems \ref{main:subexp}, \ref{main:sobol}
we just have to prove the claim \eqref{claimclaim}
on solutions of \eqref{eq:beamComp} with initial conditions satisfying \eqref{smallnessu0}
and we shall provide suitable lower bounds on the lifespan $T_0>0$.

In order to apply our abstract Birkhoff normal form result we need some preliminary results.
More precisely we shall prove that the 
the Hamiltonian function $H$
in  \eqref{beamHam} (see also \eqref{beamHamFourier}) 
can be written in the form \eqref{hamIniz} with
\begin{equation}\label{hamRR00}
R_0:=\mathtt{H}_{\geq3}:=
\int_{\mathbb{T}} F\Big( \frac{\omega^{-1/2}(u+\bar{u})}{\sqrt{2}}\Big)\ {dx}\,,
\end{equation}
where $F$ is the analytic function in \eqref{nonlineF}.
This is the content of the following lemma.
\begin{lemma}\label{lem:applicoSub}
Let $R>0$ as in \eqref{nonlineF} and  consider 
the Hamiltonian $R_0$ in \eqref{hamRR00}.

\vspace{0.3em}
\noindent
 {\bf \emph{Case} $(\SE)$}. 
Fix  $s,p>0$ as in Theorem \ref{main:subexp} and let
(recall \eqref{es:fgrowth})
\begin{equation}\label{abbondanza}
s_0:=\frac{2}{3}s\,,\quad \tw_0:=\tw(s_0,p)\,.
\end{equation}
For any $\bar{r}>0$
satisfying  (see \eqref{costanti algebra})
\begin{equation}\label{stimaNEM2}
2 \mathtt{C}_{\rm alg}(p) \bar{r} <R\,,
\end{equation}
one has that the Hamiltonian $R_0$ in \eqref{hamRR00}
belongs to the space $\cH_{\bar{r}}(\th_{{\mathtt w}_0})$ of regular Hamiltonians
(see Def. \ref{Hreta}),
and 
\begin{equation}\label{stimaNEM}
|R_0|_{\bar{r},\tw_0}\leq
\mathtt{C}(p, R) 
|F|_{R} \bar{r}<+\infty\,,\qquad \mathtt{C}(p, R) :=\frac{8C_{\rm alg}(p)}{R^3}\,.
\end{equation}

\vspace{0.3em}
\noindent
 {\bf  \emph{Case} $(\SO)$}.  
Fix  $s=0$ and let 
\begin{equation}\label{abbondanzaSob}
\tw_0:=\tw(p_0)\,,
\end{equation}
For any $p_0>1$ 
and for any $\bar{r}>0$
satisfying (recall \eqref{costanti algebra})
\[
\mathtt{C}_{{\rm alg}, \mathtt{M}}(p_0) \bar{r} <R\,,
\] 
one has that the Hamiltonian $R_0$ in \eqref{hamRR00}
belongs to the space $\cH_{\bar{r}}(\th_{{\mathtt w}_0})$ of regular Hamiltonians
and 
\begin{equation}\label{stimaNEMSob}
|R_0|_{\bar{r},\tw_0}\leq \frac{\mathtt{C}_{{\rm alg}, \mathtt{M}}(p_0)}{R} |F|_{R} \bar{r}<+\infty\,.
\end{equation}
\end{lemma}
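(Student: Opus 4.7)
The plan is to insert the power series expansion \eqref{nonlineF} of $F$ into the definition \eqref{hamRR00} of $R_0$, verify that the resulting expansion is an admissible Hamiltonian in the sense of Definition \ref{def:admiHam}, and then estimate its norm using the Banach algebra property of Lemma \ref{algebra} together with the uniform bound $\omega_j^{-1/2}\leq 1$, valid for $\mathtt{m}\in[1,2]$.

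Setting $v:=\omega^{-1/2}(u+\bar u)/\sqrt 2$, whose Fourier coefficients read $v_k=(u_k+\overline{u_{-k}})/(\sqrt 2\,\omega_k^{1/2})$, Parseval gives
\[
R_0 \;=\; 2\pi \sum_{d\geq 3} F^{(d)} \sum_{j_1+\cdots+j_d=0} v_{j_1}\cdots v_{j_d}.
\]
Expanding each $v_{j_i}$ linearly in $u$ and $\bar u$ yields the representation $R_0=\sum H_{\alpha,\beta}u^\alpha\bar u^\beta$. The momentum conservation \eqref{momento} follows from the constraint $\sum j_i=0$, and reality \eqref{real} follows from $F^{(d)}\in\R$ together with the symmetric role played by $u_{j_i}$ and $\overline{u_{-j_i}}$. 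To control the norm, I would differentiate to obtain $\partial_{\bar u_j} R_0 = 2\pi \sum_d d\, F^{(d)}(\omega_j^{-1/2}/\sqrt 2)(v^{\star(d-1)})_j$, pass to the majorant, and bound $|v_k|\leq w_k:=(|u_k|+|u_{-k}|)/\sqrt 2$ using $\omega_k\geq 1$. Since the weights \eqref{peso sub} and \eqref{peso sob} satisfy $\tw_j=\tw_{-j}$, the triangle inequality gives $|w|_{\tw_0}\leq \sqrt 2\,|u|_{\tw_0}$, and the algebra estimate \eqref{stimacalgcalg} yields $|w^{\star(d-1)}|_{\tw_0}\leq \Calg^{d-2}|w|_{\tw_0}^{d-1}$.

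Combining these ingredients, for $|u|_{\tw_0}\leq r$ one deduces
\[
|R_0|_{r,\tw_0}\;\leq\; 2\pi\sum_{d\geq 3} d\,|F^{(d)}|\,(\sqrt 2\,\Calg\,r)^{d-2}.
\]
The hypothesis \eqref{stimaNEM2} guarantees $\sqrt 2\,\Calg\,\bar r < R$, so the tail is geometric; factoring out the leading $d=3$ term, which contributes a factor $\bar r\cdot R^{-3}$, and comparing the remainder with $|F|_R=\sum|F^{(d)}|R^d$ produces the bound \eqref{stimaNEM} with the stated constant $\mathtt{C}(p,R)=8\Calg/R^3$ after absorbing absolute multiplicative factors. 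The Sobolev estimate \eqref{stimaNEMSob} follows by the identical computation, replacing $\Calg$ by $\CalgM$ through \eqref{stimacalgcalgMM}. The main technical point requiring care is the passage to the majorant: the pointwise bound $|v_k|\leq w_k$ uses crucially that $\omega_k\geq 1$ uniformly in $\mathtt{m}\in[1,2]$, and the norm inequality $|w|_{\tw_0}\leq \sqrt 2\,|u|_{\tw_0}$ relies on the evenness of the weight; beyond these structural observations the argument is a routine Banach-algebra majorant bound, and the only remaining effort is bookkeeping the constants so they match \eqref{stimaNEM} and \eqref{stimaNEMSob}.
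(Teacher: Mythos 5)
Your proposal is correct and follows essentially the same route as the paper's proof: expand $F$ in its power series, bound the multilinear coefficients using $\omega_j\ge 1$ uniformly in $\mathtt m\in[1,2]$, estimate the majorant vector field through the convolution algebra of Lemma \ref{algebra}, and sum the resulting geometric series using $2\Calg\bar r<R$ (with $\CalgM$ in the Sobolev case). The only differences are cosmetic bookkeeping — you carry the binomial expansion through the single function $v$ and its majorant $w$ rather than term by term as the paper does — and the constants you obtain match \eqref{stimaNEM}--\eqref{stimaNEMSob} up to the absolute factors you already flag.
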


\begin{proof}
Let us consider the case $(\SE)$.
It follows using the analyticity 
of the function $F$ in \eqref{nonlineF} and Lemma \ref{algebra}.
and reasoning as in Proposition $5.2$ in \cite{BMP:CMP}.

\noindent
By analyticity we have that 
\[
\begin{aligned}
R_0=\int_{\mathbb{T}}F\Big( \frac{\omega^{-1/2}(u+\bar{u})}{\sqrt{2}}\Big) dx
&= 
\sum_{d=3}^\infty F^{(d)}\int_{\mathbb{T}} (\frac{\omega^{-1/2}(u+\bar{u})}{\sqrt{2}})^d  dx
\\&= 
\sum_{d=3}^\infty \frac{F^{(d)}}{2^{d/2}} 
\sum_{k=0}^d 
\binom{d}{k} \int_{\mathbb{T}}(\omega^{-\frac{1}{2}}u)^k(\omega^{-\frac{1}{2}}\bar{u})^{d-k}dx \,,
\end{aligned}
\]
hence, passing to the Fourier basis, we get
\[
 R_0=\sum_{d=3}^\infty \frac{F^{(d)}}{2^{d/2}}
 \sum_{k=0}^d \binom{d}{k} 
 \sum_{\sum_{i=1}^{k}j_i=\sum_{p=k+1}^{d}j_{p}}
 \!\!\!\!\!
 C^{(d)}(j_1,\ldots,j_{d})
 u_{j_1} \cdots  u_{j_k}\bar{u}_{j_{k+1}}\cdots\bar{u}_{j_{d}}
\]
where the coefficients $C^{(d)}(j_1,\ldots,j_{d})$ are symmetric in $j_i$, $i=1,\ldots,d$, 
and have the form
\begin{equation}\label{cddcdd}
C^{(d)}(j_1,\ldots,j_{d}):=\prod_{i=1}^{d}\omega^{-\frac{1}{2}}(j_{i})
\stackrel{\eqref{omegoneBeam}}{=}\prod_{i=1}^{d}\frac{1}{\sqrt[4]{|j_i|^{4}+\mathtt{m}}}\leq1\,.
\end{equation}
In view of \eqref{eq:beamComp} we now compute the first component of the
vector field $X_{R_0}$. We have
\[
\begin{aligned}
X_{R_0}^{(j)}&:=-{\rm i}\partial_{\bar{u}_{j}} R_0
=-{\rm i}
\sum_{d=3}^\infty \frac{F^{(d)}}{2^{d/2}}
\sum_{k=0}^d \binom{d}{k} (d-k) \mathtt{a}^{(d,k)}(j)\,,
\\
\mathtt{a}^{(d,k)}(j)&:=
\sum_{\sum_{i=1}^{k}j_i-\sum_{p=k+1}^{d-1}j_{p}=j}
 \!\!\!\!\!
 C^{(d)}(j_1,\ldots,j)
 u_{j_1} \cdots  u_{j_k}\bar{u}_{j_{k+1}}\cdots\bar{u}_{j_{d-1}}\,.
\end{aligned}
\]
where we used the symmetry of the coefficients  $C^{(d)}(j_1,\ldots,j)$.
Notice 
that
\[
|X_{\underline{R_0}}(u)|_{{\tw(s,p)}}\leq 
|X_{\underline{R_0}}(\underline{u})|_{{\tw(s,p)}}\leq 
\sum_{d=0}^\infty \frac{|F^{(d)}|}{2^{d/2}}
 \sum_{k=0}^d \binom{d}{k} (d-k) |\mathtt{a}^{(d,k)}|_{{\tw(s,p)}}
\]
where $\underline{u}=(|u_{j}|)_{j\in\mathbb{Z}}$ and  for any $0\leq k\leq d$, $d\geq3$ we set
$\mathtt{a}^{(d,k)}:=(\mathtt{a}^{(d,k)}(j))_{j\in\mathbb{Z}}$.
Notice moreover that 
\[
|\mathtt{a}^{(d,k)}(j)|\leq |(\underbrace{\underline{u}\star \ldots \underline{u}}_{k }\star 
\underbrace{\bar{\underline{u}}\star\ldots \star\bar{\underline{u}}}_{d-k-1})_{j}|\,,\qquad
\forall\, j\in\mathbb{Z}\,.
\]
Therefore, using estimate \eqref{stimacalgcalg} in Lemma \ref{algebra}, we obtain
\[
\begin{aligned}
|X_{\underline{R_0}}(u)|_{{\tw(s,p)}}&\leq 
\sum_{d=3}^\infty \frac{|F^{(d)|}}{2^{d/2}}
 \sum_{k=0}^d \binom{d}{k} (d-k)
( \Calg)^{d-2}
 |u|_{\mathtt{w}(s,p)}^{d-1}
 \\&
 \leq 
 \sum_{d=3}^\infty d 2^{\frac{d}{2}-1} |F^{(d)}|
 ( \Calg)^{d-2}
 |u|_{\mathtt{w}(s,p)}^{d-1}
 \leq 
 4\sum_{d=3}^\infty  |F^{(d)}|
 ( 2 \Calg)^{d-2}
 |u|_{\mathtt{w}(s,p)}^{d-1}\,.
 \end{aligned}
\]
Then
\[
|R_0|_{\bar{r},\tw_0}\leq 
 \sum_{d=3}^\infty  |F^{(d)}|
 ( 2 \Calg \bar{r})^{d-2} \,,
\]
which implies the bound \eqref{stimaNEM}
using  \eqref{stimaNEM2}
and the hypothesis \eqref{nonlineF}.
This proves item $(i)$.

\noindent
The case $(\SO)$ in item $(ii)$ can be proved by  following almost word by word 
the proof of item $(i)$
using the estimate \eqref{stimacalgcalgMM} instead of \eqref{stimacalgcalg} in Lemma \ref{algebra}.
\end{proof}

\subsection{Sub-exponential stability and proof of Theorem \ref{main:subexp}}\label{sec:subexpstability}
For future convenience we set
\begin{equation}\label{sceltabarr}
\bar{r}:=\frac{R}{\mathtt{C}_{\rm alg}(p)\mathtt{C}(p,R)|F|_{R}}\,,
\end{equation}
where $\mathtt{C}(p,R)$ is given in \eqref{stimaNEM}. 
Therefore by case $\SE$ in Lemma \ref{lem:applicoSub} we have that the Hamiltonian 
$H$ in  \eqref{beamHam} can be written, for any $\tK\geq1$, in the form 
\eqref{hamIniz} 
with  $s_0, \tw_0$ as in \eqref{abbondanza}.
Our aim is to apply Theorem \ref{mainthm:BNF}
to the Hamiltonian $H=D_{\omega}+R_0$ with $R_0$ in \eqref{abbondanza}.
Recalling the parameters in \eqref{parametri}-\eqref{patata2}
we fix
\begin{equation}\label{cond:r0piccolo}
r_0= 2\delta\,,
\end{equation}
and
\begin{equation}\label{Kappone}
\tK = \tK(r_0) := \left[
\Big(\frac{\g^4s_0}{2^{10}\tC}\Big)^{\frac{1}{2}}
\Big(\frac{1}{2^8}\ln\ln\frac{\bar{r}}{r_0}\Big)^{\frac{\mathtt{q}-1}{2}}
\right]\,,
\end{equation}
where $[\cdot]$ is the integer part and 
where $\tC>0$ is the absolute constant given by Proposition \ref{shulalemma}.

\vspace{0.5em}
\noindent
We claim that  \eqref{cond:r0piccolo} and \eqref{smalldelta0} implies 
that $r_0$ satisfies the smallness condition \eqref{conditionRstar}
with $\tK(r_0)$ in \eqref{Kappone} and $J_{\tK}^{*}=J_{\tK}^{\SE}$.
In other words we prove that 
\begin{equation*}
\mathtt{C}_0:= 32 e4^3 |R_0|_{\bar{r},\tw_0} \tK\left(\frac{4}{\gamma^{4}}\right)^{\tK}
\exp\exp\Big(\big({\frac{\tK^2\tC}{s_0}} \big)^{\frac{1}{\mathtt{q}-1}}\Big)
\frac{r_0}{\bar{r}} \leq 1\,.
\end{equation*}
In fact,  with that choice of $\tK$, we can easily check that conditions \eqref{cond:r0piccolo} 
and \eqref{smalldelta0}
 actually implies a much stronger bound:
\begin{equation*}
\begin{aligned}
\mathtt{C}_0 
&\le 2^{16} |R_0|_{\bar{r},\tw_0} 
\exp\set{\exp\Big( \big(\tK^2 \frac{2^4\tC}{\g^{4}s_0}\big)^{\frac{1}{\mathtt{q}-1}}\Big) - 
\ln \frac{\bar{r}}{r_0}}
\\& 
\stackrel{\eqref{Kappone}}{\le} 2^{16} |R_0|_{\bar{r},\tw_0} 
\exp\set{\exp\big(2^{-14}\ln\ln\frac{\bar{r}}{r_0}\big) - 
\ln \frac{\bar{r}}{r_0} } 
\\& 
\stackrel{\eqref{stimaNEM}}{\le} 
2^{16} \mathtt{C}(p,R) |F|_{R} \bar{r}
\exp\set{\pa{\ln\frac{\bar{r}}{r_0}}^{\frac{1}{2^{14}}} - 
\ln\frac{\bar{r}}{r_0} } 
\\&
\le  
2^{16}\mathtt{C}(p,R) |F|_{R} \bar{r} 
\exp\set{-\frac{1}{2} \ln\frac{\bar r}{r_0} } \le 1
\end{aligned}
\end{equation*}
where the last bound holds provided that 
 \begin{equation}
\label{cond:r0 3} 
 r_0/\bar{r} \le \exp 2^{\frac{2^{14}}{1 - 2^{14}}}\,,
 \end{equation}
 and 
\begin{equation}\label{cond:r0 2}
r_0 \le \frac{\bar{r}}{2^{32} (\mathtt{C}(p,R) |F|_{R} \bar{r} )^2}
\stackrel{\eqref{sceltabarr}}{=}
\frac{\mathtt{C}_{\rm alg}(p)}{2^{28} C(p,R) |F|_{R}R}\,,
\end{equation}
are satisfied. 
Since $r_0=2\delta$ and $\delta\leq \delta_{\SE}$, we have that condition 
\eqref{smalldelta0} implies \eqref{cond:r0 2}
if one requires
 \begin{equation}\label{condC21}
 C_{2}=C_{2}(p,R)\geq \frac{2^{29}C(p,R)R}{\mathtt{C}_{\rm alg}(p)}\,.
 \end{equation}
 Using \eqref{cond:r0piccolo} we notice that \eqref{smalldelta0} implies \eqref{cond:r0 3}
if one requires
 \begin{equation}\label{condC22}
  C_{2}=C_{2}(p,R)\geq\frac{8\mathtt{C}_{\rm alg}(p)C(p,R)}{R}\,.
 \end{equation}
Theorem \ref{mainthm:BNF}, together with Lemma \ref{tempotempo} 
(recall also Remark \ref{rmk:kernel} ), implies 
that the solution $u(t)$ of \eqref{eq:beamComp} evolving from initial 
data satisfying \eqref{smallnessu0}
remains in the ball of radius $2\delta$ for time $t\in [0,{T}_0]$ 
(i.e. the bound \eqref{claimclaim} is satisfied) with
(see the bound \eqref{sonno3})
%
%
%
%
 \begin{equation*}
 \begin{aligned}
{T}_0 := 
\frac{\bar{r}^{\tK + 1}}{8 r_0^{\tK+1}\norm{R_0}_{\bar{r},\tw_f}} 
\frac{(16 e\tK 4^{\tK + 2})^{-\tK }}{\pa{\g^{-4\tK}\exp\exp\tK^2c/s_0}^\tK}  
&\ge  \frac{1}{8 \norm{R_0}_{\bar{r},\tw_f}} 
\frac{\bar{r}}{r_0} 
\frac{\pa{\frac{\bar{r}}{r_0}}^\tK}{\Big(\pa{\frac{2^9}{\gamma^4}}^\tK \tK\exp\exp\tK^2c/s_0\Big)^\tK} \,.
\end{aligned}
 \end{equation*}
 We have to show that, thanks to \eqref{Kappone} and \eqref{sceltabarr}
 and the smallness of $r_0$, the time ${T}_0$ 
 above satisfies the bound \eqref{longtime1}.
 Actually we prove the slightly better bound
 \[
 T_0\geq \frac{1}{8 \norm{R_0}_{\bar{r},\tw_f}}
 \frac{\bar{r}}{r_0} \exp\pa{\frac{1}{2}\ln\pa{\frac{\bar{r}}{r_0}}
\pa{\frac{\g^4s_0}{2^{16}\mathtt{C}}\ln\ln\frac{\bar{r}}{r_0}}^{\frac{\mathtt{q}-1}{2}}}\,.
 \] 
 Indeed  $\delta_{\SE} \leq \bar{r}/2$ using  \eqref{sceltabarr} and \eqref{smalldelta0}
 and  taking  $C_2(p,R)$ as in \eqref{condC22}.
 Let us observe that 
 $$
 \begin{aligned}
 \exp\pa{\tK\ln\frac{r_0}{\bar{r}} + \tK^2\ln\pa{\frac{2^9}{\gamma^4}}}
 \pa{\tK\exp\{e^{\tK^2\mathtt{C}/s_0}\}}^\tK 
 &\le  
 \exp\pa{\tK\ln\frac{r_0}{\bar{r}}} 
 \exp\pa{\tK^2\ln\pa{\frac{2^9}{\gamma^4}}+ {\tK^2e^{\tK^2\mathtt{C}/s_0}}}
 \\
 &\le 
 \exp\pa{\tK\ln\frac{r_0}{\bar{r}}} \exp\exp\pa{\frac{2^{10}\mathtt{C}}{\g^4s_0}\tK^2 },
 \end{aligned}
 $$
 hence
 \begin{equation}
 \label{time}
 \begin{aligned}
{T}_0 &\ge 
\frac{1}{8 \norm{R_0}_{\bar{r},\tw_f}}
 \frac{\bar{r}}{r_0}  \exp\pa{\tK\ln\frac{\bar{r}}{r_0} 
- \exp\pa{\frac{2^{10}\mathtt{C}}{\g^4s_0}\tK^2}} 
\\&
\stackrel{\eqref{Kappone}}{\ge} 
\frac{1}{8 \norm{R_0}_{\bar{r},\tw_f}} \frac{\bar{r}}{r_0}  \exp\pa{\left[
\Big(\frac{\g^4s_0}{2^{10}\tC}\Big)^{\frac{1}{2}}
\Big(\frac{1}{2^8}\ln\ln\frac{\bar{r}}{r_0}\Big)^{\frac{\mathtt{q}-1}{2}}
\right]\ln\frac{\bar{r}}{r_0} - \exp\left( \left(\frac{1}{2^8}\ln\ln\frac{\bar{r}}{r_0}\right)^{\mathtt{q}-1}\right)} 
\\& 
{\ge} 
\frac{1}{8 \norm{R_0}_{\bar{r},\tw_f}} \frac{\bar{r}}{r_0}  \exp\pa{\left[
\Big(\frac{\g^4s_0}{2^{10}\tC}\Big)^{\frac{1}{2}}
\Big(\frac{1}{2^8}\ln\ln\frac{\bar{r}}{r_0}\Big)^{\frac{\mathtt{q}-1}{2}}
\right]\ln\frac{\bar{r}}{r_0} - \exp\left(\left( \ln\ln^{1/{2^8}}\frac{\bar{r}}{r_0}\right)^{\mathtt{q}-1}\right)} \\
&
{\ge} 
\frac{1}{8 \norm{R_0}_{\bar{r},\tw_f}} \frac{\bar{r}}{r_0}  \exp\pa{\left[
\Big(\frac{\g^4s_0}{2^{10}\tC}\Big)^{\frac{1}{2}}
\Big(\frac{1}{2^8}\ln\ln\frac{\bar{r}}{r_0}\Big)^{\frac{\mathtt{q}-1}{2}}
\right]\ln\frac{\bar{r}}{r_0} - \exp\left(\left( \ln\ln^{1/{2^8}}\frac{\bar{r}}{r_0}\right)\right)} \\
&
\ge \frac{1}{8 \norm{R_0}_{\bar{r},\tw_f}} \frac{\bar{r}}{r_0}  \exp\pa{\ln\frac{\bar{r}}{r_0}
\pa{\Big(\frac{\g^4s_0}{2^{10}\tC}\Big)^{\frac{1}{2}}
\Big(\frac{1}{2^8}\ln\ln\frac{\bar{r}}{r_0}\Big)^{\frac{\mathtt{q}-1}{2}} - 2}} 
\\&
\ge \frac{1}{8 \norm{R_0}_{\bar{r},\tw_f}} \frac{\bar{r}}{r_0}  
\exp\pa{\frac{1}{2}\ln\pa{\frac{\bar{r}}{r_0}}
\pa{
\Big(\frac{\g^4 s_0}{2^{18}\mathtt{C}}\Big)^{1/2}\Big(\ln\ln\frac{\bar{r}}{r_0} \Big)^{\frac{\mathtt{q}-1}{2}}}}
\end{aligned}
 \end{equation}
where we required 
that
\begin{align}
r_0 &\le \frac{\bar{r}}{\exp\exp 2^8}\,, \label{cardano1}
\\
r_0&\leq \bar{r} \exp\exp\pa{-\Big(\frac{2^{22}\mathtt{C}}{\g^4s_0}\Big)^{\frac{1}{\mathtt{q}-1}}}\,.
\label{conditio r}
\end{align}
We have that conditions
\eqref{smalldelta0} and \eqref{sceltabarr} imply \eqref{cardano1}
if one requires
 \begin{equation}\label{condC23}
 C_{2}=C_{2}(p,R)\geq \frac{2\exp\exp \{2^8\} \mathtt{C}_{\rm alg}(p)C(p,R)}{R}\,.
 \end{equation}
 The  bound \eqref{conditio r} is implied by the \eqref{smalldelta0}
setting $\mathtt{c}=3\cdot 2^{15}\mathtt{C}$
and 
\begin{equation}\label{sceltaC1}
C_1=C_1(p,R)\geq \frac{R}{\mathtt{C}_{\rm alg}(p)C(p,R)}\,.
\end{equation}
The bound \eqref{time}, together with \eqref{stimaNEM}, implies 
the lower bound \eqref{longtime1} by setting 
\begin{equation}\label{sceltaC3}
C_{3}=C_{3}(p,R)\geq 16 C(p,R)\,.
\end{equation}
Theorem \ref{main:subexp} follows by the discussion above choosing 
the constant
$C_1$ as in \eqref{sceltaC1}, $C_{3}$ as in \eqref{sceltaC3}, $C_2$ satisfying 
\eqref{condC21}, \eqref{condC22}, \eqref{condC23}
where $\mathtt{C}_{\rm alg}(p)$ is given in \eqref{costanti algebra}
and $C(p,R)$ in \eqref{stimaNEM}.

\subsection{Sobolev stability and proof of Theorem \ref{main:sobol}}
In order to prove Theorem \ref{main:sobol}
we reason as done in Section \ref{sec:subexpstability} and we assume 
that the initial condition  $u_0$ satisfies (see \eqref{normnorma})
\begin{equation}\label{smallnessu0Sob}
\|u_0\|_{p}:=\|u_0\|_{0,p}\leq \delta\,.
\end{equation}
Fix 
\begin{equation}\label{KapponeSob}
\tK:=\tK(p):=\left[\Big(\frac{p-1}{2^4 3^4}\Big)^{\frac{1}{3}}\right]-1\,,\qquad 
p_0:=p-\zeta:=p-2^4 3^4\sum_{i=1}^{\tK}i^2\,.
\end{equation}
Recall that in Theorem \ref{main:sobol} we
required  $p>2^{6}(36)^{2}+1$.
Then one can check that
\begin{equation}\label{boundsKappone}
1\leq \widetilde{c} (p-1)^{\frac{1}{3}}\leq \tK(p)\leq p^{1/3}\,,\qquad 
\widetilde{c}:=\frac{1}{2(36)^{2/3}}\,,\quad p_0>1\,.
\end{equation}
For future convenience we set
\begin{equation}\label{sceltabarrSob}
\bar{r}:=\frac{R}{\mathtt{C}_{{\rm alg}, \mathtt{M}}(p_0) |F|_{R}}\,.
\end{equation}
Therefore by item $(ii)$ in Lemma \ref{lem:applicoSub} we have that the Hamiltonian 
$H$ in  \eqref{beamHam} can be written, for any $\tK\geq1$, in the form 
\eqref{hamIniz} 
with  $\tw_0$ as in \eqref{abbondanzaSob}.
Here again we apply Theorem \ref{mainthm:BNF}
to the Hamiltonian $H=D_{\omega}+R_0$ with $R_0$ in \eqref{abbondanza}.
Recalling the parameters in \eqref{parametri}-\eqref{patata2}
we fix
\begin{equation}\label{cond:r0piccoloSob}
r_0= 2\delta\,.
\end{equation}
We claim that  \eqref{cond:r0piccoloSob} and \eqref{smalldelta0} imply 
that $r_0$ satisfies the smallness condition \eqref{conditionRstar}
with $\tK(p)$ in \eqref{KapponeSob} and $J_{\tK}^{*}=J_{\tK}^{\SO}$.
That is, we show that
\begin{equation*}
\mathtt{C}_0:= 32 e4^3 |R_0|_{\bar{r},\tw_0} \tK\left(\frac{4}{\gamma^{4}}\right)^{\tK}
\exp\Big( 2^{12}\mathtt{C} \tK^3\Big)
\frac{r_0}{\bar{r}} \leq 1\,.
\end{equation*}
First, we notice that
\begin{equation}\label{cardano2}
p_0>1\qquad \Rightarrow
\quad
1\leq \mathtt{C}_{{\rm alg},\mathtt{M}}(p_0):=\sqrt{2}\sqrt{2+\frac{2p_0+1}{2p_0-1}}\leq 2^3\,.
\end{equation}
Hence
\[
\begin{aligned}
\mathtt{C}_0&\leq  |R_0|_{\bar{r},\tw_0} \exp\Big\{2^{19}\mathtt{C}\ln(1/\gamma) \tK^3\Big\}
\frac{r_0}{\bar{r}}
\\&
\stackrel{\eqref{stimaNEMSob}, \eqref{sceltabarrSob}, \eqref{KapponeSob}}{\leq }
\frac{|F|_{R}}{R}\exp\Big\{2^{21}\mathtt{C}\ln(1/\gamma) p\Big\}r_0\leq 1\,,
\end{aligned}
\]
provided that
\[
r_0\leq \frac{R}{|F|_{R}}\exp\Big\{-2^{21}\mathtt{C}\ln(1/\gamma) p\Big\}.
\]
The last inequality follows from \eqref{cond:r0piccoloSob} 
and \eqref{smalldelta1Sob},
taking 
\begin{equation}\label{melone1}
\mathtt{c}\geq2^{21}\mathtt{C}\,.
\end{equation} 
In view of  Theorem \ref{mainthm:BNF} and Lemma \ref{tempotempo} 
we have 
that the solution $u(t)$ of \eqref{eq:beamComp} evolving from initial 
data satisfying \eqref{smallnessu0Sob}
remains in the ball of radius $2\delta$ for time $t\in [0,{T}_0]$ with
(recall the estimate \eqref{sonno3} and $J_{\tK}^{\SO}$ in \eqref{conditionRstar})
 \begin{equation*}
 \begin{aligned}
{T}_0 := 
\frac{\bar{r}^{\tK + 1}}{8 r_0^{\tK+1}\norm{R_0}_{\bar{r},\tw_f}} \frac{(16 e\tK 4^{\tK + 2})^{-\tK }}{\pa{\g^{-4\tK}\exp(2^{12}\mathtt{C}\tK^3)}^\tK}  \,.
\end{aligned}
 \end{equation*}
Observe that
\[
\begin{aligned}
T_0&\geq \frac{1}{8|R_0|_{\bar{r},\tw}}\frac{\bar{r}}{r_0}\left(\frac{\bar{r}}{r_0}\right)^{\tK}
\frac{1}{[\exp\big(2^{14}\mathtt{C}\ln(1/\gamma)\tK^3
\big)]^\tK}
\\&
\stackrel{\eqref{stimaNEMSob}, \eqref{sceltabarrSob}}{\geq}
\frac{R}{8 \mathtt{C}_{\rm alg}(p_0) |F|_{R}}\frac{1}{r_0}
\left(\frac{\bar{r}}{r_0}\right)^{\tK}
\frac{1}{[\exp\big(2^{14}\mathtt{C}\ln(1/\gamma)\tK^3
\big)]^\tK}
\\&
\geq \frac{R}{|F|_{R}}\frac{1}{r_0}\left(\frac{\bar{r}}{r_0}\right)^{\tK}
\frac{1}{2^6 [\exp\big(2^{14}\mathtt{C}\ln(1/\gamma)\tK^3
\big)]^\tK},
\end{aligned}
\]
since $p\geq p_0$ and \eqref{cardano2} holds.
We also remark that condition \eqref{smalldelta1Sob}
with 
\eqref{sceltabarrSob}-\eqref{cond:r0piccoloSob}
implies that $\delta_{\SO}\leq \bar{r}/4$. Finally,
by \eqref{boundsKappone}
one has
\[
\begin{aligned}
T_0&\geq 
 \frac{R}{|F|_{R}}\frac{1}{r_0}\left(\frac{\delta_{\SO}}{\delta}\right)^{\widetilde{c} (p-1)^{1/3}}
 \frac{1}{2^6 
 [\exp\big(p 2^{14}\mathtt{C}\ln(1/\gamma)
\big)]^p}
\\&
\geq
 \frac{R}{|F|_{R}}\frac{1}{r_0}\left(\frac{\delta_{\SO}}{\delta}\right)^{\widetilde{c} (p-1)^{1/3}}
 \frac{1}{
 [\exp\big(p 2^{15}\mathtt{C}\ln(1/\gamma)
\big)]^p}\,.
\end{aligned}
\]
Setting $\mathtt{c}=\max\{ 2(36)^{2/3}, 2^{21}\mathtt{C}\}$, we get the thesis.

\subsection{Sobolev stability optimization and proof of Corollary \ref{coroOttimo}} Let us fix $\delta$ such that
\begin{equation}\label{deltapiccolo}
0<\delta\leq \bar{ \delta}:=\delta_{\SO}\exp\Big\{-\mathtt{b}
\ln(1/\gamma)\Big\}
\end{equation}
where 
\begin{equation}\label{constB}
\mathtt{b}:=\max\big\{24\mathtt{c}^{2}\left(\frac{1}{48\mathtt{c}}\right)^{\frac{10}{9}},
24\mathtt{c}^{2}\big[2^6 (36)^{2}\big]^{5/3}
\big\}\,,
\end{equation}
and let us consider 
\begin{equation}\label{defpdelta}
p=p(\delta)=1+\left(\frac{1}{24\mathtt{c}^2\ln(1/\gamma)} \ln\big(\frac{\delta_{\SO}}{\delta}\big)\right)^{3/5}
\end{equation}
 where $\mathtt{c}>0$ is the absolute constant given by Theorem \ref{main:sobol} and $\delta_{\SO}$
 given in \eqref{smalldelta1Sob}.
%

\noindent
In order to prove Corollary \ref{coroOttimo}
we reason as above and we assume  that 
that 
\begin{equation*}
\|u_0\|_{p}:=\|u_0\|_{0,p}\leq \delta\,.
\end{equation*}
Our aim is to apply Theorem \ref{main:sobol}.
We shall verify that condition \eqref{smalldelta1Sob} holds for $\delta$ small enough.
First notice that \eqref{defpdelta}
implies 
\begin{equation}\label{defpdelta2}
\delta= \delta_{\SO}\exp\big\{-24\mathtt{c}^2(p-1)^{\frac{5}{3}}\ln(1/\gamma)\big\}\,.
\end{equation}
Then  smallness condition \eqref{smalldelta1Sob} translates in proving that
\[
\exp\{ \mathtt{c}\ln(1/\gamma)\Big[ p-24\mathtt{c}(p-1)^{5/3}\Big]\}\leq1\quad \Leftrightarrow
\quad 
p-24\mathtt{c}(p-1)^{5/3}\leq0\,,
\]
recalling that $0<\gamma<1$, which is true as long as

\[
\begin{aligned}
p&=1+\left(\frac{1}{24\mathtt{c}^2\ln(1/\gamma)} \ln\big(\frac{\delta_{\SO}}{\delta}\big)\right)^{3/5} 
\geq 1+\big(\frac{1}{48\mathtt{c}}\big)^{\frac{2}{3}}\,,\quad \Leftrightarrow
\\
& \ln(\delta_{\SO}/\delta)\geq 24\mathtt{c}^2 \left(\frac{1}{48\mathtt{c}}\right)^{\frac{10}{9}}\ln(1/\gamma)\,.
\end{aligned}
\]
This follows by \eqref{deltapiccolo}-\eqref{constB}.
With similar computations we can check that \eqref{deltapiccolo}-\eqref{constB}, 
together with \eqref{defpdelta}, yield
$p=p(\delta)>1+2^6(36)^{2}$.
Hence,Theorem \ref{main:sobol} applies, guaranteeing time of stability of the form
\[
\begin{aligned}
T_0&\geq\frac{R}{2 |F|_{R}\delta}
\left(\frac{\delta_{\SO}}{\delta}\right)^{\frac{1}{\mathtt{c}}(p-1)^{1/3}}
\exp\big\{ -p^2\mathtt{c}\ln(1/\gamma)\big\}
\\&
\stackrel{\eqref{defpdelta2}}{=}
\frac{R}{2 |F|_{R}\delta}
\exp\Big\{ 24\mathtt{c}(p-1)^{2}
\ln(1/\gamma)
-p^2\mathtt{c}\ln(1/\gamma)
\Big\}
\\&
\geq \frac{R}{2 |F|_{R}\delta}\exp\Big\{
\mathtt{c}\ln(1/\gamma) \Big(24(p-1)^{2}-p^2\Big)
\Big\}
\\&
\geq \frac{R}{2 |F|_{R}\delta}\exp\Big\{
\mathtt{c}(p-1)^{2}\ln(1/\gamma) 
\Big\}
\\&
\stackrel{\eqref{defpdelta}}{\geq}
\frac{R}{2 |F|_{R}\delta}\exp\Big\{
\frac{\mathtt{c}(\ln(1/\gamma))^{-1/5})}{(24\mathtt{c}^2)^{6/5}}(\ln(\delta_{\SO}/\delta) )^{1+\frac{1}{5}}
\Big\}\,,
\end{aligned}
\]
which is the stated bound \eqref{longtime1Sobcoro}.

\appendix

\section{Technical Lemmata}\label{app:techlem}
We collect some technical lemmata.

\begin{lemma}\label{constance beam sub-immersion}
Consider the function 
\begin{equation}\label{roadsport}
\lambda(x):=(\log(2+x))^{\mathtt{q}}\,,\;\;\; 1< \mathtt{q}\leq 2\,,\quad x > 0\,.
\end{equation}
Then, there exists a constant $1\leq \kappa\leq 5/4$ such that  for any integer $N\geq 4$
and $x_2\geq x_3\geq \cdots \geq x_N\geq 1$, 
 \begin{equation}\label{mortazza}
\sum_{\ell= 2}^N \lambda(x_\ell)
\geq \lambda\Big(\sum_{\ell= 2}^N x_\ell\Big)+
c
\sum_{\ell= 3}^N \lambda(x_\ell)\,,
\qquad
\text{with}\ \ 
c:=1-\frac{1}{\kappa}\geq 0\,.
\end{equation}
\end{lemma}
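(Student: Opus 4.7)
My proof strategy rests on two facts about $\lambda(y)=(\log(2+y))^{\mathtt{q}}$: it is sub-additive (as noted in the footnote of the paper) and concave on $[1,\infty)$. Concavity follows from a direct computation:
$$\lambda''(y)=\mathtt{q}(\log(2+y))^{\mathtt{q}-2}(2+y)^{-2}\bigl[(\mathtt{q}-1)-\log(2+y)\bigr],$$
which is negative whenever $\log(2+y)>\mathtt{q}-1$; for $y\geq 1$ and $\mathtt{q}\in(1,2]$, one has $\log 3>1\geq \mathtt{q}-1$, so $\lambda'<0'' $ throughout.

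Setting $T:=\sum_{\ell=3}^N x_\ell$ and $c:=1-1/\kappa$, the inequality is equivalent to showing
$$g(x_2,\ldots,x_N)\;:=\;\lambda(x_2)-\lambda(x_2+T)+(1-c)\sum_{\ell=3}^N\lambda(x_\ell)\;\geq\;0.$$
The first step is to observe that by concavity, $\partial_{x_2}g=\lambda'(x_2)-\lambda'(x_2+T)\geq 0$, so $g$ is non-decreasing in $x_2$ on the admissible interval $[x_3,\infty)$; hence it suffices to verify the inequality at $x_2=x_3$. A completely analogous (but more delicate) monotonicity analysis in the remaining variables, coupled with Schur-concavity of $\sum_{\ell\geq 3}\lambda(x_\ell)$ on the constraint $\sum x_\ell=T$, $x_\ell\geq 1$ (which in turn follows from concavity of $\lambda$), reduces the problem to checking the inequality at the extreme compressed configuration $x_2=x_3=\cdots=x_N=1$, where it collapses to the one-parameter inequality
$$\bigl((N-1)-c(N-2)\bigr)(\log 3)^{\mathtt{q}}\;\geq\;(\log(N+1))^{\mathtt{q}}.$$

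I would then fix $\kappa=5/4$ (so $c=1/5$), so the target becomes $(4N-3)(\log 3)^{\mathtt{q}}\geq 5(\log(N+1))^{\mathtt{q}}$. The case $N=4$, $\mathtt{q}=2$ yields $\tfrac{13}{5}(\log 3)^2\approx 3.14\geq(\log 5)^2\approx 2.59$ (the tightest instance), and since the LHS grows linearly in $N$ while the RHS grows only as $(\log N)^{\mathtt{q}}$, the inequality persists for all $N\geq 5$ and for all $\mathtt{q}\in(1,2]$ by monotonicity in $\mathtt{q}$. This yields some $\kappa\in[1,5/4]$ as required.

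The main obstacle I expect is rigorously justifying the reduction to the extremal all-ones configuration: while monotonicity in the leading variable $x_2$ is immediate, the partial derivative $\partial_{x_\ell}g=(1-c)\lambda'(x_\ell)-\lambda'(x_2+T)$ for $\ell\geq 3$ can change sign (the first term decreases with $x_\ell$, while $\lambda'(x_2+T)$ also decreases through $T$). One therefore has to analyze critical points in the interior via Lagrange multipliers, exploiting the ordering constraint $x_3\geq\cdots\geq x_N\geq 1$ together with Schur-concavity to conclude that the minimum of $g$ is indeed attained at the compressed boundary configuration.
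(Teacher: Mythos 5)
There is a genuine gap at the heart of your argument: the reduction of the minimization of $g$ to the all--ones configuration is not justified, and in fact it is \emph{false} for your chosen constant $c=1/5$. Take $N=4$, $\mathtt{q}=2$ and restrict $g$ to the diagonal $x_2=x_3=x_4=t$, so $h(t):=(3-2c)\lambda(t)-\lambda(3t)$. Then $h'(1)=2.6\cdot\tfrac{2\log 3}{3}-\tfrac{6\log 5}{5}\approx 1.904-1.931<0$, so $g$ strictly \emph{decreases} as one moves off the all--ones point into the interior of the domain; hence verifying the inequality at $x_2=\cdots=x_N=1$ does not establish it. The invocation of Schur-concavity also points the wrong way: for a fixed sum $T$, a symmetric sum of a concave function is \emph{minimized} at the majorization-maximal (most spread-out) points of $\{x_\ell\ge 1,\ \sum x_\ell=T\}$, not at the compressed one, and in any case you still have to vary $T$ and $x_2$. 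The monotonicity $\partial_{x_\ell}g\ge 0$ that would let you push the variables down is \emph{not} a consequence of concavity alone (for the concave function $\lambda(x)=x$ one gets $g=-c\sum_{\ell\ge 3}x_\ell<0$); it requires the quantitative doubling bound $\lambda'(x)\ge \kappa\,\lambda'(2x)$ with $\kappa=1/(1-c)$, and for the present $\lambda$ the ratio $\lambda'(x)/\lambda'(2x)=\tfrac{2+2x}{2+x}\bigl(\log(2+x)/\log(2+2x)\bigr)^{\mathtt q-1}$ drops to about $1.057$ near $x=1$ when $\mathtt q=2$, so $\kappa=5/4$ (i.e.\ $c=1/5$) is inadmissible.

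For comparison, the paper's proof does not reduce to all ones: it first splits off the variables equal to $1$, then iteratively compresses $x_2\to x_3\to\cdots\to x_{N_0}$ down to the value $2$, each step resting on the three quantitative properties \eqref{porchetta}, \eqref{coppa}, \eqref{ariccia} of $\lambda$ rather than on concavity; and for this specific $\lambda$ it concludes that only $\kappa=1$, i.e.\ $c=0$, is available, in which case \eqref{mortazza} reduces to iterated sub-additivity \eqref{arrosticini}. Your first step (monotonicity in $x_2$ from $\lambda'$ decreasing) coincides with the paper's, and your endpoint arithmetic is correct, but the bridge between them — the claim that the infimum sits at the all--ones vertex — is both unproved and untrue for $c=1/5$, so the proposal cannot be completed along the route you describe.
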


\begin{proof}
First of all we notice that
the function $\lambda$ is sub-linear i.e.  
\begin{equation}\label{arrosticini}
\lambda(x+y)\leq \lambda(x)+\lambda(y)\,,\qquad \forall\, x,y\, \in\, \mathbb N_+\,.
\end{equation}
Recall \eqref{es:fgrowth} and that $\langle x+y\rangle\leq\langle x\rangle 
+\langle y\rangle  $ 
and set
\[
F(x,y):=(\ln(2+ x+y))^\mathtt{q}-(\ln(2+x))^{\mathtt{q}}\,,\qquad 
\forall x,y\in \mathbb{R}\,,\quad x,y\geq0\,.
\]
To obtain \eqref{arrosticini}
it is sufficient to show that,
for any $y\in \mathbb{R}$, $y\geq1$, one has 
\begin{equation}\label{arrosticini2}
F(x,y)\leq (\ln(2+y))^{\mathtt{q}}\,,\qquad
\forall x\in \mathbb{R}\,,\quad x\geq1\,.
\end{equation}
We claim that for \emph{fixed} $y\geq1$ the function $[1,+\infty]\ni x\mapsto F(x,y)$
is decreasing. Then \eqref{arrosticini2} follows since $F(0,y)\leq (\ln(2+y))^{\mathtt{q}}$.
We have
\[
\partial_{x}F(x,y)=\mathtt{q}\left\{
\frac{(\ln(2+x+y))^{\mathtt{q}-1}}{2+x+y}
-\frac{(\ln(2+x))^{\mathtt{q}-1}}{2+x}
\right\}\,,
\]
hence $\partial_{x}F(x,y)=0$ for some $x\geq1$ if and only if
\begin{equation}\label{arrosticini3}
g(x,y)=\frac{(\ln(2+x))^{\mathtt{q}-1}}{2+x}\,,
\end{equation}
where
\[
g(x,y):=\frac{(\ln(2+x+y))^{\mathtt{q}-1}}{2+x+y}\,.
\]
The right hand side of the equation \eqref{arrosticini3} does not depend in $y\in\mathbb{R}$.
One can check (using also that $1<\mathtt{q}\leq 2$) 
that for fixed $x$, the function $y\mapsto g(x,y)$ is decreasing if $y\geq1$.
Hence \eqref{arrosticini3} cannot be satisfied, and, on the other hand, one has
\[
g(x,y)< \frac{(\ln(2+x))^{\mathtt{q}-1}}{2+x}\,,\quad x,y\geq1\,,
\] 
which implies $\partial_{x}F(x,y)<0$ and hence the claim. Then 
\eqref{arrosticini2} holds true.
We claim that
there is a constant $1\leq \kappa\leq 5/4$  such that
\begin{align}
x \lambda(1) &\geq \kappa \lambda(x)\,,\qquad \forall\, x\geq 1\,, \label{porchetta}
\\
\lambda'(x) &\geq \kappa \lambda'(2 x)\,,\qquad\forall\, x\geq  1\,,\label{coppa}
\\
x \lambda(2) &\geq \kappa \lambda(2x)\,,\qquad\forall\, x\geq 1\,.\label{ariccia}
\end{align}
By using \eqref{roadsport} one has that conditions 
\eqref{porchetta}, \eqref{coppa}, \eqref{ariccia} are equivalent to
\begin{equation}\label{ponte1}
\kappa\leq \min_{i=1,2,3}\big\{\inf_{x\geq1}h_{1}(x), \inf_{x\geq1}h_{2}(x), \inf_{x\geq1}h_{3}(x)\big\}\,,
\end{equation}
setting
\[
h_1(x):=x\Big(\frac{\log(3)}{\log(2+x)}\Big)^{\mathtt{q}}\,,
\quad
h_2(x):=\frac{2+2x}{2+x}\Big(\frac{\log(2+x)}{\log(2+2x)}\Big)^{\mathtt{q}-1}\,,
\quad
h_3(x):=x\Big(\frac{\log(4)}{\log(2+2x)}\Big)^{\mathtt{q}}\,.
\]
Notice that
\[
\begin{aligned}
h_{1}'&=\Big(\frac{\log(3)}{\log(2+x)}\big)^{\mathtt{q}}
\Big[1-\frac{\mathtt{q}x}{2+x}\frac{1}{\log(2+x)}\Big]\,,
\\
h_{3}'&=\Big(\frac{\log(4)}{\log(2+2x)}\big)^{\mathtt{q}}
\Big[1-\frac{2\mathtt{q}x}{2+2x}\frac{1}{\log(2+2x)}\Big]\,,
\\
h_2'&=\frac{1}{(2+x)^2}\Big(\frac{\log(2+x)}{\log(2+2x)}\big)^{\mathtt{q}-1}\Big[
2-(\mathtt{q}-1)\Big(
\frac{4+2x}{\ln(2+2x)}-\frac{2+2x}{\ln(2+x)}
\Big)\Big]\,.
\end{aligned}
\] 
Consider the function $h_1(x)$. One can note that
\[
\begin{aligned}
&h_1'(x)\geq 0 \;\;\; \Leftrightarrow\;\;\; \mathtt{q}x\leq (2+x)\ln(2+x) \;\;\; \Leftrightarrow\;\;\;
1\leq \frac{1}{\mathtt{q}}\ln(2+x)+\frac{2}{2+x} \,,
\end{aligned}
\]
which is true for $x\geq1$, since $1<\mathtt{q}\leq 2$.  Hence $\min_{h\geq1} h_1(x)\geq1$.
With similar computations one can check that the constant $\kappa$ in \eqref{ponte1}
can be fixed as $\kappa=1$. Then \eqref{porchetta}, \eqref{coppa}, \eqref{ariccia} hold.


\smallskip
\noindent
We are now in position to prove  \eqref{mortazza}.
Let $1\leq N_0\leq N$ such that
\[
x_{N_0+1}=\ldots =x_N=1\,, \qquad x_{N_0}\geq 2\,.
\]
Let $N_1:=N-N_0\geq 0$. Define 
\[
g(x_2,x_3,\ldots,x_{N_0})
:=
\sum_{\ell= 2}^{N_0} \lambda(x_\ell)
- \lambda\Big(N_1+\sum_{\ell= 2}^{N_0} x_\ell\Big)
-c\sum_{\ell= 3}^{N_0} \lambda(x_\ell)+(1-c)N_1 \lambda(1)\,.
\]
In order to prove \eqref{mortazza} we have to show that $g\geq 0.$
If $N_0=1$ we have that
\[
g=-\lambda(N-1)+(1-c)(N-1)\lambda(1)
\stackrel{\eqref{porchetta}}{\geq}
[(1-c-\frac{1}{\kappa})(N-1)]\lambda(1)\geq 0\,.
\]
Assume now that $N_0\geq 2.$
We have
\begin{equation}\label{cippa}
\partial_{x_2} g=\lambda'(x_2)-\lambda'\Big(N_1+\sum_{\ell= 2}^{N_0} x_\ell\Big) \geq 0\,,
\end{equation} 
 since  $\lambda'(x)$ is decreasing for $x\geq 2$.  
 If $N_0=2$ we have that
\begin{equation*}
\begin{aligned}
g(x_2)\geq g(2)
&=
\lambda(2)-f(N)+(1-c)(N-2)\lambda(1)
\\
&\stackrel{\eqref{arrosticini}}\geq
-\lambda(N-2)+(1-c)(N-2)\lambda(1)
\stackrel{\eqref{porchetta}}\geq
(1-c-\frac{1}{\kappa})(N-2)\lambda(1)\geq 0\,.
\end{aligned}
\end{equation*}
Assume now that $N_0\geq 3$. 
Then, since $x_2\geq x_3$, by \eqref{cippa} we get
 \begin{equation*}
 \begin{aligned}
 g(x_2,x_3,&\ldots,x_{N_0})
 \geq
 g(x_3,x_3,\ldots,x_{N_0})
\\& 
=(2-c)\lambda(x_3)
- \lambda\Big(N_1+2x_3+\sum_{\ell= 4}^{N_0} x_\ell\Big)
+(1-c)\sum_{\ell= 4}^{N_0} \lambda(x_\ell)
+(1-c)N_1 \lambda(1)
\\&
=:g_3(x_3,\ldots,x_{N_0})\,.
\end{aligned}
\end{equation*}
Proceeding analogously, since $\lambda'$ 
is decreasing and positive, we get
\begin{equation}\label{lippa}
\begin{aligned}
\partial_{x_3}  g_3(x_3,\ldots,x_{N_0})
&=
2\Big(
\Big(1-\frac{c}{2}\Big)\lambda'(x_3)
- \lambda'\Big(N_1+2x_3+\sum_{\ell= 4}^{N_0} x_\ell\Big)
\Big)
\\&
\geq
2\left(
\Big(1-\frac{c}{2}\Big)\lambda'(x_3)
- \lambda'(2x_3)
\right)
\stackrel{\eqref{coppa}}\geq
2\Big(1-\frac{c}{2}-\frac{1}{\kappa}\Big)\lambda'(x_3)
\geq 0\,.
\end{aligned}
\end{equation}
Let us note that by \eqref{arrosticini} and \eqref{porchetta}
we have
\[
\lambda(N+1)\leq \lambda(5)+\left(1+\frac{N-5}{\kappa}\right)\lambda(1)\,,
\qquad \forall\, N\geq 4\,.
\]
Then, if $N_0=3,$ and recalling that $\kappa\leq 5/4$ we get
\begin{equation*}
\begin{aligned}
g_3(x_3)
&\stackrel{\mathclap{\eqref{lippa}}}\geq g_3(2)
\\&
=(2-c)\lambda(2)- \lambda(N+1)+(1-c)(N-3) \lambda(1)
\\&
\geq (2-c)\lambda(2)
- \lambda(5)-\Big(1+\frac{N-5}{\kappa}\Big)\lambda(1)
+(1-c)(N-3) \lambda(1)
\\&
\stackrel{\mathclap{\eqref{ariccia}}}\geq
\Big(2-c-\frac{5}{2\kappa}\Big)\lambda(2)
+\Big((1-c)(N-3)-1-\frac{N-5}{\kappa}\Big)\lambda(1)
\\&
\stackrel{\mathclap{\eqref{porchetta}}}\geq
\Big( 
\frac{4-2c}{\kappa} -\frac{5}{\kappa^2}+
(1-c)(N-3)-1-\frac{N-5}{\kappa}\Big)\lambda(1)
\geq 0\,,
\end{aligned}
\end{equation*}
by definition of $c$.
Otherwise, for $N_0\geq 4$ we get, since $x_3\geq x_4$,
\begin{equation*}
\begin{aligned}
g_3(x_3,x_4,&\ldots,x_{N_0})\stackrel{\eqref{lippa}}{\geq}
 g_3(x_4,x_4,\ldots,x_{N_0})
\\&
=(3-2c)\lambda(x_4)
- \lambda\Big(N_1+3x_4+\sum_{\ell= 5}^{N_0} x_\ell\Big)
+(1-c)
\sum_{\ell= 5}^{N_0} \lambda(x_\ell)
+(1-c)N_1 \lambda(1)
\\&
=:g_4(x_4,\ldots,x_{N_0})
\end{aligned}
\end{equation*}
Proceeding in this way we obtain by induction,  for $3\leq n\leq N_0$ 
functions
\begin{equation*}
\begin{aligned}
g_n&=g_n(x_n,\ldots,x_{N_0})
\\&
:=
 \big((n-1)-(n-2)c\big)\lambda(x_n)
- \lambda\Big(N_1+(n-1)x_n+\sum_{\ell= n+1}^{N_0} x_\ell\Big)
\\&
+(1-c)
\sum_{\ell= n+1}^{N_0} \lambda(x_\ell)
+(1-c)N_1 \lambda(1)\,,
\end{aligned}
\end{equation*}
with
\[
g_{n-1}(x_{n-1},\ldots,x_{N_0})
\geq 
g_n(x_n,\ldots,x_{N_0})\,,
\qquad
\forall\, x_{n-1}\geq x_n\,.
\]
Indeed, since
\begin{equation*}
\begin{aligned}
\partial_{x_n} &g_n(x_n,\ldots,x_{N_0})=
\\&
=
 \big((n-1)-(n-2)c\big)\lambda'(x_n)
- (n-1)\lambda'\Big(N_1+(n-1)x_n+\sum_{\ell= n+1}^{N_0} x_\ell\Big)
\\&
\geq 
(n-1)
\left[
\left(1-\frac{n-2}{n-1}c\right)\lambda'(x_n)
- \lambda'(2 x_n)
\right]
\\&
\stackrel{\eqref{coppa}}
\geq
(n-1)
\left(1-\frac{n-2}{n-1}c-\frac{1}{\kappa}\right)\lambda'(x_n)
\geq 0\,,
\end{aligned}
\end{equation*}
we have that
for every $x_n\geq x_{n+1}$
\[
g_n(x_n,x_{n+1},\ldots,x_{N_0})
\geq 
g_n(x_{n+1},x_{n+1},\ldots,x_{N_0})
=
g_{n+1}(x_{n+1},\ldots,x_{N_0})\,.
\]
In conclusion we get
\begin{equation*}
\begin{aligned}
g(x_2,x_3,&\ldots,x_{N_0})
\geq
g_{N_0}(x_{N_0})
\\&
=
 \big((N_0-1)-(N_0-2)c\big)\lambda(x_{N_0})
- \lambda\big(N-N_0+(N_0-1)x_{N_0}\big)
\\&
+(1-c)(N-N_0) \lambda(1)
\end{aligned}
\end{equation*}
with $\partial_{x_{N_0}} g_N(x_{N_0})\geq 0$.
Then, recalling that we are in the case $N_0\geq 4,$  
we get
\begin{equation*}
\begin{aligned}
g_{N_0}(x_{N_0})&\geq g_{N_0}(2)=
\\&
 =\big((N_0-1)-(N_0-2)c\big)\lambda(2)
- \lambda(N+N_0-2)
+(1-c)(N-N_0) \lambda(1)
\\&\stackrel{\eqref{arrosticini}}\geq
(N_0-1) \lambda(2)-c (N_0-2) \lambda(2)
-\lambda(2N_0-2)- \lambda(N-N_0)
+(1-c)(N-N_0) \lambda(1)
\\&
\stackrel{\eqref{ariccia},\eqref{porchetta}}\geq
\left[\left(1-\frac{1}{\kappa}\right)(N_0-1) 
-c (N_0-2)
\right]\lambda(2)
+\left(1-c-\frac{1}{\kappa}\right)(N-N_0) \lambda(1)
\\&
=c\lambda(2)\geq 0\,.
\end{aligned}
\end{equation*}
This complete the proof of \eqref{mortazza}.
\end{proof}
{
\begin{remark}
From the proof of Lemma \ref{constance beam sub-immersion} actually we can deduce that,
if $\lambda(x)$ is defined as in \eqref{roadsport}, 
one can choose the constant $\kappa=1$ in \eqref{porchetta}-\eqref{ponte1}. This means 
that the constant $c$ in \eqref{mortazza} can be chosen to be $0$. However, we state it in this more general form to make it transparent also for a different choice of $\lambda(x)$, provided it is sublinear and satisfies conditions \eqref{porchetta}-\eqref{ponte1}.
\end{remark}
}

\begin{lemma}[\emph{Lemma C.1 in \cite{BMP:CMP}}]\label{chiappa}
For $p,\beta>0$ and $x_0\geq 0$ we have that 
\[
\max_{x\geq x_0} x^p e^{-\beta x}=
\begin{cases}
&(p/\beta)^p e^{-p}
\quad \mbox{if}  \; \ \  
x_0\leq p/\beta\,,
\\
&x_0^p e^{-\beta x_0}
\quad \mbox{if} \; \ \  
x_0> p/\beta\,.
\end{cases}
\]
\end{lemma}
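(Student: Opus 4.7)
The plan is to treat this as an elementary one-variable calculus exercise, reducing it to locating the unique critical point of $f(x) := x^p e^{-\beta x}$ on $(0,\infty)$ and comparing it to $x_0$.

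First I would compute $f'(x) = x^{p-1} e^{-\beta x} (p - \beta x)$ for $x>0$. Since $x^{p-1} e^{-\beta x}>0$, the sign of $f'$ is determined by $p-\beta x$. Hence $f$ is strictly increasing on $(0, p/\beta]$ and strictly decreasing on $[p/\beta, \infty)$, so the unique global maximum of $f$ on $(0,\infty)$ is attained at $x^\ast := p/\beta$, with value
\[
f(x^\ast) = (p/\beta)^p e^{-p}.
\]

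Next I would split into the two cases according to the position of $x_0$ relative to $x^\ast$. If $x_0 \le p/\beta$, then $x^\ast \in [x_0,\infty)$ and the maximum on $[x_0,\infty)$ coincides with the global maximum, giving the first branch of the formula. If instead $x_0 > p/\beta$, then $f$ is strictly decreasing throughout $[x_0,\infty)$, so the supremum is attained at the left endpoint $x=x_0$, yielding $f(x_0) = x_0^p e^{-\beta x_0}$.

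There is essentially no obstacle: the only thing worth a moment's care is the boundary behaviour (the case $x_0=0$ is absorbed in the first branch since $f(x)\to 0$ as $x \to 0^+$ when $p>0$, so the supremum is still attained at $x^\ast$) and the fact that in the second branch the value $x_0^p e^{-\beta x_0}$ is automatically larger than any other value of $f$ on $[x_0,\infty)$ by monotonicity. No use of the earlier material in the paper is required.
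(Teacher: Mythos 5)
Your proof is correct and is exactly the intended elementary argument: the paper itself gives no proof of this lemma (it is quoted verbatim from Lemma C.1 of \cite{BMP:CMP}), and the standard derivative computation $f'(x)=x^{p-1}e^{-\beta x}(p-\beta x)$ followed by the case split on the position of $x_0$ relative to the critical point $p/\beta$ is all that is needed. No gaps.
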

\begin{lemma}\label{luchino}
	Let $x_1\geq x_2\geq \ldots\geq x_N\geq 2.$ Then
	$$
	\frac{\sum_{1\leq\ell\leq N} x_\ell}{\prod_{1\leq\ell\leq N} \sqrt{x_\ell}}
	\leq 
	\sqrt{x_1}+\frac{4}{ \sqrt{x_1}}\,.
	$$
\end{lemma}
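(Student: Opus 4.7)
The natural strategy is induction on $N$. For the base case $N=1$ one simply observes that
$$
\frac{x_1}{\sqrt{x_1}} = \sqrt{x_1} \leq \sqrt{x_1} + \frac{4}{\sqrt{x_1}}\,,
$$
so nothing is to be done. For the inductive step, I would isolate the smallest variable $x_N$ (rather than the largest) and factor $P := \prod_{\ell=1}^N \sqrt{x_\ell} = \sqrt{x_N}\,P'$ with $P' := \prod_{\ell=1}^{N-1}\sqrt{x_\ell}$, writing
$$
\frac{\sum_{\ell=1}^N x_\ell}{P}
= \frac{x_N}{P} + \frac{1}{\sqrt{x_N}}\cdot\frac{\sum_{\ell=1}^{N-1} x_\ell}{P'}
= \frac{\sqrt{x_N}}{P'} + \frac{1}{\sqrt{x_N}}\cdot\frac{\sum_{\ell=1}^{N-1} x_\ell}{P'}\,.
$$
Applying the inductive hypothesis to the ordered tuple $x_1\geq\ldots\geq x_{N-1}\geq 2$ bounds the second summand by $(\sqrt{x_1}+4/\sqrt{x_1})/\sqrt{x_N}$, and since for $N\geq 2$ one has $P'\geq \sqrt{x_1}$ (the product $P'$ contains the factor $\sqrt{x_1}$ and all the other factors are $\geq\sqrt{2}\geq 1$), the first summand is controlled by $\sqrt{x_N}/\sqrt{x_1}$.

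It then suffices to prove the scalar inequality
$$
\frac{\sqrt{x_N}}{\sqrt{x_1}} + \frac{\sqrt{x_1}+4/\sqrt{x_1}}{\sqrt{x_N}} \leq \sqrt{x_1} + \frac{4}{\sqrt{x_1}}\,,
$$
which after clearing denominators and using $x_1\geq x_N$ reduces to the single-variable statement
$$
\sqrt{x_N}\,(x_N+4) \geq 2x_N + 4 \qquad \text{for all } x_N\geq 2\,.
$$
This last inequality is the crux of the argument, and it is elementary: at $x_N=2$ it reads $6\sqrt{2}\geq 8$, which holds since $\sqrt{2}\geq 4/3$, and for $x_N\geq 4$ it is immediate from $\sqrt{x_N}\geq 2$; the intermediate range $x_N\in[2,4]$ is handled by checking that the function $f(x):=\sqrt{x}(x+4)-2x-4$ is monotone increasing on this interval (its derivative $\tfrac{3}{2}\sqrt{x}+2/\sqrt{x}-2$ is strictly positive there).

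The main—indeed, only—obstacle is choosing the right decomposition so that the induction closes cleanly. Peeling off the largest index $x_1$ produces remainders of size $\simeq 1$ that cannot be absorbed into the $4/\sqrt{x_1}$ term, whereas peeling off the smallest index $x_N$ introduces the favourable factor $1/\sqrt{x_N}\leq 1/\sqrt{2}$ in front of the inductive quantity and reduces everything to a clean one-variable inequality.
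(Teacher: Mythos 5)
Your proof is correct and follows essentially the same route as the paper's: induction on $N$, peeling off the smallest variable and reducing the inductive step to a scalar inequality. The only real difference is that you bound the leftover term by $\sqrt{x_N}/\sqrt{x_1}$ (via $P'\geq\sqrt{x_1}$), whereas the paper uses the cruder bound $\sqrt{x_{N+1}}/\sqrt{2}$ coming from $\prod_{1\leq\ell\leq N}\sqrt{x_\ell}\geq\sqrt{2}$; your sharper estimate lets you close with the one-variable inequality $\sqrt{x}\,(x+4)\geq 2x+4$ for $x\geq 2$, while the paper is left with a genuinely two-variable inequality that it resolves by concavity in $s=\sqrt{x_{N+1}}$.
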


\begin{proof}
By induction over $N$.
The result is obvious for $N=1$. 
Let assume it for $N$ and show it for $N+1$.
We have  
	\begin{eqnarray*}
	&&\frac{\sum_{1\leq\ell\leq N+1} x_\ell}{\prod_{1\leq\ell\leq N+1} \sqrt{x_\ell}}
	\leq
	\frac{(\sum_{1\leq\ell\leq N} x_\ell) +x_{N+1}}{(\prod_{1\leq\ell\leq N} \sqrt{x_\ell})\sqrt{x_{N+1}}} 
	\\
	&\leq& 
	\left(\sqrt{x_1}+\frac{4}{ \sqrt{x_1}}\right)
	\frac{1}{\sqrt{x_{N+1}}}+\frac{\sqrt{x_{N+1}}}{\sqrt 2}\,.
	\end{eqnarray*}
	since $x_1\geq x_2\geq \ldots\geq x_N\geq 2$
	implies  $\prod_{1\leq\ell\leq N} \sqrt{x_\ell}\geq\sqrt 2.$
	It remains to prove that
	$$
\left(\sqrt{x_1}+\frac{4}{ \sqrt{x_1}}\right)
	\frac{1}{\sqrt{x_{N+1}}}+\frac{\sqrt{x_{N+1}}}{\sqrt 2}	
	\leq
	\sqrt{x_1}+\frac{4}{ \sqrt{x_1}}\,.
	$$
	Denoting $t:=\sqrt{x_1}$ and $s:=\sqrt{x_{N+1}}$,
	the above inequality is equivalent to 
	$$
	f(t,s):=2 t^2 s-\sqrt 2 t s^2+8s-2 t^2-8\geq 0
	$$
	for $\sqrt 2\leq s\leq t.$ Since $f$ is a concave function of $s$ we have that
	$$f(t,s)\geq \min\{ f(t,\sqrt 2), f(t,t)\}\,.$$
	It is immediate to see that 
	$$
	\min_{t\geq \sqrt 2} f(t,\sqrt 2)=7\sqrt 2 -9>0\,,
	\qquad
	\min_{t\geq \sqrt 2} f(t,t)=12\sqrt 2-16>0\,,
	$$
	showing that $f(t,s)>0$ for $\sqrt 2\leq s\leq t$
	and concluding the proof.
\end{proof}
In the following it will be convenient to use the following way of reordering of the indexes
$j\in \Z$ appearing in the Hamiltonian \eqref{HamPower}.

\begin{definition}\label{n star}
Consider a vector $v=\pa{v_i}_{i\in \Z}$  $v_i\in \N$, $|v|<\infty$. 
	
\noindent
$(i)$ We denote by $\na=\na(v)$ the vector $\pa{\na_l}_{l\in I}$ 
(where $I\subset \N$ is finite)  
which is the decreasing rearrangement of
\[
\{\N\ni h> 1\;\; \mbox{ repeated}\; v_h + v_{-h}\; \mbox{times} \} 
\cup 
\set{ 1\;\; \mbox{ repeated}\; v_1 + v_{-1} + v_0\; \mbox{times}  }
\]
	
\noindent
$(ii)$ Define the vector $m=m(v)$ as the reordering of the elements of the set
\[
\set{j\neq 0 \,,\quad \mbox{repeated}\quad  \abs{u_j} \;\mbox{times}}\,,
\]
where $D<\infty$ is its cardinality, such that
$|m_1|\ge |m_2|\ge \dots\geq |m_D|\ge 1$. 	
\end{definition}

\begin{remark}
We  observe that the number $N:=|\al|+|\bt|$ is the cardinality of \,$\na$
and that, 
by momentum conservation, there 
exists a choice of $\s_i = \pm1, 0$ such that 
\begin{equation}\label{pi e cappucci}
\sum_l \sigma_l\na_l=0\,,\qquad 
\end{equation}
with $\sigma_l \neq 0$  if  $\na_l \neq 1$.
Hence, 
\begin{equation}\label{eleganza}
\na_1\le  \sum_{l\ge 2}\na_l\,,
\end{equation}
Indeed, if $\sigma_1 = \pm 1$, 
the inequality follows directly from \eqref{pi e cappucci}; 
if $\sigma_1 = 0$, then $\na_1=1$, hence $\na_l = 1$  $\forall l$. 
\end{remark}

\medskip
 Given $\al\neq\bt\in\N^\Z,$ with $|\al|+|\bt|<\infty$
 we consider $m=m(\al-\bt)$ and $\na=\na(\al+\bt)$.	
If we denote by $D$ the cardinality of $m$ and $N$ the one of $\na$ we have 
\begin{align}
D+\al_0+\bt_0&\le N\,, \label{cappella}
\\
(|m_1|,\dots,|m_D|,\underbrace{1,\;\dots \;,1}_{N-D\;\rm{times}} )\, 
&\leq\,
\pa{\na_1,\dots \na_N}\,.\label{abbacchio}
\end{align}
Set $\s_l= {\rm sign}(\al_{m_l}-\bt_{m_l})$.
For every function $g$ defined on $\Z$ we have that
\begin{equation}\label{pula2}
\begin{aligned}
\sum_{i\in\Z} g(i) |\al_i-\bt_i|
&=
g(0)|\al_0-\bt_0|+
\sum_{l\geq 1} g(m_l)\,,
\\
\sum_{i\in\Z} g(i) (\al_i-\bt_i)
&=
g(0)(\al_0-\bt_0)+
\sum_{l\geq 1} \s_l g(m_l)\,.
\end{aligned}
\end{equation}

\begin{lemma}\label{lem:constance2SE}
For all $(\al,\bt)\in \mathcal{M}$ (see \eqref{mass-momindici}) the following holds.

\noindent
$(i)$ If 
\begin{equation}\label{divisor}
\sum_i (\al_i-\bt_i)|i|^2 \le 10 \sum_i |\al_i-\bt_i| \,,
\end{equation}
then we have
\begin{equation}\label{constance2SE}
\sum_i |\al_i-\bt_i| \lambda(\sqrt{\jap i})  \le
63 \sum_{l\ge 3} \lambda(\na_l) \le 
\frac{63}{\kappa} \pa{\sum_i \pa{\al_i+\bt_i}\lambda(\jap{i})- 2\lambda(\jap{j})}\,,
\end{equation}
where $\lambda$ is in \eqref{es:fgrowth}
and
\begin{equation}\label{constanceSOB}
\prod_i(1+\abs{\al_i-\bt_i}{\jap{i}}) 
\le e^{27}
N^6\prod_{l=3}^N\na_l^{\tau_0}\,.
\end{equation}
where
$N=|\al|+|\bt|$. 

\noindent
$(ii)$ If on the contrary \eqref{divisor} does not hold then
\begin{equation}\label{pool1}
|\omega\cdot(\alpha-\beta)|\geq1\,,
\end{equation}
where $\omega$ is given in \eqref{dispLaw}.
\end{lemma}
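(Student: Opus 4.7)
\emph{Case (ii) is direct.}  From $\omega_i=|i|^2+\mathtt{m}/(\omega_i+|i|^2)$ with $\mathtt{m}\in[1,2]$ one checks that $0\leq \omega_i-|i|^2\leq \sqrt{2}$ for every $i\in\Z$. Setting $A:=\sum_i(\alpha_i-\beta_i)|i|^2\in\Z$ and $B:=\sum_i|\alpha_i-\beta_i|\in\N$, this gives $|\omega\cdot(\alpha-\beta)-A|\leq \sqrt{2}\,B$. If \eqref{divisor} fails, then $A>10B$, and since both sides are integers $A\geq 10B+1$. As $\sqrt{2}B<10B$ we conclude $\omega\cdot(\alpha-\beta)\geq A-\sqrt{2}B\geq 1$, which is \eqref{pool1}.

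\emph{Case (i): setup.}  Swapping $\alpha\leftrightarrow\beta$ leaves $|v|$ and $\hat n$ invariant but reverses the sign of $A$, so WLOG $A\geq 0$ and thus $|A|\leq 10B$. Recall from the momentum identity $\sum_l \epsilon_l m_l=0$ the key estimate $\hat n_1\leq \sum_{l\geq 2}\hat n_l$ (cf.\ \eqref{eleganza}), and note $\sum_i(\alpha_i+\beta_i)\lambda(\langle i\rangle)=\sum_{l=1}^N\lambda(\hat n_l)$ by definition of $\hat n$. For the second inequality in \eqref{constance2SE}, I apply Lemma \ref{constance beam sub-immersion} to $(\hat n_2,\ldots,\hat n_N)$ to get $\sum_{l\geq 2}\lambda(\hat n_l)\geq \lambda(\hat n_1)+c\sum_{l\geq 3}\lambda(\hat n_l)$; rearranging and using $\lambda(\langle j\rangle)\leq \lambda(\hat n_1)$ delivers the claim with constant $1/\kappa=1-c$.

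\emph{First inequality of \eqref{constance2SE}: main technical step.}  Since $|v_i|\leq \alpha_i+\beta_i$ and $|m_l|\leq \hat n_l$,
$$\sum_i|v_i|\lambda(\sqrt{\langle i\rangle})\leq |v_0|\lambda(1)+\sum_{l=1}^D\lambda(\sqrt{|m_l|})\leq N\lambda(1)+\sum_{l=1}^N\lambda(\sqrt{\hat n_l}).$$
Combining $\hat n_1\leq \sum_{l\geq 2}\hat n_l$ with $\sqrt{a+b}\leq \sqrt{a}+\sqrt{b}$ and the sublinearity of $\lambda$ (inequality \eqref{arrosticini}) yields $\lambda(\sqrt{\hat n_1})\leq \sum_{l\geq 2}\lambda(\sqrt{\hat n_l})$, so everything reduces to controlling $\lambda(\sqrt{\hat n_2})$ (equivalently $\lambda(\sqrt{|m_2|})$) by $\sum_{l\geq 3}\lambda(\hat n_l)$. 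I do this by a sign case analysis on $\epsilon_l=\sgn(v_{m_l})$ using the identity $\sum_l\epsilon_l|m_l|^2=A\in[-10B,10B]$: if $\epsilon_1=\epsilon_2$, absorbing negative contributions gives $|m_2|^2\leq 10B+\sum_{l\geq 3}|m_l|^2$; if $\epsilon_1\neq \epsilon_2$ with $m_1,m_2$ of opposite index-signs, the momentum identity $\epsilon_1 m_1+\epsilon_2 m_2=-\sum_{l\geq 3}\epsilon_l m_l$ forces $|m_1|+|m_2|\leq \sum_{l\geq 3}|m_l|$; in the final sub-case (same index-signs, opposite $\epsilon$'s) one has $|m_1|-|m_2|\geq 1$, so the factorisation of $|m_1|^2-|m_2|^2$ and \eqref{divisor} yield $|m_2|\lesssim N+\sum_{l\geq 3}|m_l|^2$. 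Taking square roots, passing through $\lambda$ by sublinearity, and finally absorbing residual terms $\lambda(N^{1/2})+N\lambda(1)$ into $\sum_{l\geq 3}\lambda(\hat n_l)\geq (N-2)\lambda(1)$ (valid when $N\geq 3$, which always holds in the applications since the scaling degree $\geq 1$) produces the claim with the explicit accumulated constant $63$.

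\emph{Proof of \eqref{constanceSOB}.} From $(1+ab)\leq (1+a)(1+b)$ one has $\prod_i(1+|v_i|\langle i\rangle)\leq \prod_{i:v_i\neq 0}(1+|v_i|)\cdot \prod_{i:v_i\neq 0}(1+\langle i\rangle)$. The first factor is bounded by $2^B\leq 2^N$, while for the second, reindexing by $m_l$ and using $1+|m_l|\leq 2\hat n_l$ gives a bound of $2^N\prod_l\hat n_l=2^N\hat n_1\hat n_2\prod_{l\geq 3}\hat n_l$. Combining \eqref{divisor} with $\hat n_1\leq \sum_{l\geq 2}\hat n_l$ and the $|m_2|$-bound from the preceding paragraph yields $\hat n_1\hat n_2\leq C\,N^{6}\prod_{l\geq 3}\hat n_l^{\tau_0-1}$, whence the claimed $e^{27}N^6\prod_{l\geq 3}\hat n_l^{\tau_0}$.

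\textbf{Main obstacle.} The quadratic structure of the constraint \eqref{divisor} (in the $|m_l|$'s) is mismatched with the $\lambda(\sqrt{\cdot})$ target on the LHS of \eqref{constance2SE}; the passage through $\sqrt{\cdot}$ requires a careful sign analysis so that either momentum or the divisor bound effectively controls the top two frequencies $\hat n_1,\hat n_2$. The wastefulness of these steps explains both the large constant $63$ and the necessity of the hypothesis $N\geq 3$, which in our applications is automatic because we always take $\tN\geq 1$.
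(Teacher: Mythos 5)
Your case (ii) and your derivation of the second inequality in \eqref{constance2SE} from Lemma \ref{constance beam sub-immersion} match the paper's argument. The genuine gap is in your ``main technical step'' for the first inequality of \eqref{constance2SE}. After majorizing $\sum_i|\al_i-\bt_i|\lambda(\sqrt{\jap i})$ by $N\lambda(1)+\sum_{l=1}^N\lambda(\sqrt{\na_l})$, you reduce the task to controlling $\lambda(\sqrt{\na_2})$ by $\sum_{l\ge3}\lambda(\na_l)$, treating $\na_2$ as ``equivalent'' to $|m_2|$ and then bounding $|m_2|$ by your sign analysis. But $\na$ is built from $\al+\bt$ while $m$ is built from $\al-\bt$: one only has $|m_2|\le\na_2$, and the discrepancy is unbounded when $\al$ and $\bt$ share a large mode. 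Take $\al=e_M+e_3$, $\bt=e_M+e_1+e_2$: momentum holds, $\sum_i(\al_i-\bt_i)|i|^2=4\le 10\cdot3$ so \eqref{divisor} holds for every $M$, yet $\na=(M,M,3,2,1)$, so $\lambda(\sqrt{\na_2})=\lambda(\sqrt M)\to\infty$ while $\sum_{l\ge3}\lambda(\na_l)=\lambda(3)+\lambda(2)+\lambda(1)$ is fixed. Hence the sub-goal you reduce to is false; the damage is already done in your first displayed chain, which replaces a quantity depending only on $\al-\bt$ by one whose two top slots depend on $\al+\bt$. The paper's proof avoids exactly this by first proving $\sum_i g(i)|\al_i-\bt_i|\le 2g(m_1)+\sum_{l\ge3}g(\na_l)$ (its inequality \eqref{pula}): the two largest entries are kept in the $m$-variables and only the tail is converted to $\na$, so that only $|m_1|$ needs to be controlled via \eqref{divisor} and momentum --- which is precisely what your sign case analysis establishes, so the fix is structural rather than substantive. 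Your sketch of \eqref{constanceSOB} has the same flaw (it needs $\na_1\na_2\lesssim N^6\prod_{l\ge3}\na_l^{\tau_0-1}$, false in the same example); the paper simply cites Lemma 7.1 of \cite{BMP:CMP} for that bound.

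A second point: the reduction ``WLOG $A\ge0$'' by swapping $\al\leftrightarrow\bt$ is not legitimate, because \eqref{divisor} is a one-sided inequality and is not preserved by the swap: if $A<-10B$ (with your $A=\sum_i(\al_i-\bt_i)|i|^2$, $B=\sum_i|\al_i-\bt_i|$) the original pair satisfies \eqref{divisor} while the swapped one does not. The two-sided bound $|A|\le10B$ is indeed what the argument requires --- the paper's own step $|m_1|\le 31\sum_{l\ge3}\na_l^2$ uses it implicitly, and without it the first inequality of \eqref{constance2SE} fails (e.g.\ $\al=2e_0$, $\bt=e_n+e_{-n}$ with $n$ large) --- so it must be read as the intended meaning of \eqref{divisor}, not derived by symmetry.
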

\begin{proof}
Let us prove item $(i)$. Inequality \ref{constanceSOB} 
is contained in \cite[Lemma 7.1]{BMP:CMP} 
so we send the reader there for the related proof. 
For inequality \eqref{constance2SE} we proceed as follows.
We claim  that, given  $g$ defined on $\Z$ and non negative,  even
and not decreasing on $\N$,   if $\al\neq\bt$ one has
 \begin{equation}\label{pula}
\sum_{i\in\Z} g(i) |\al_i-\bt_i|\leq
2g(m_1)+
\sum_{l\geq 3} g(\na_l)\,.
\end{equation}
Indeed,  by \eqref{pula2} we note that
 \begin{equation*}
 \begin{aligned}
\sum_{i\in\Z} g(i) |\al_i-\bt_i|
&=
g(0)|\al_0-\bt_0|+
\sum_{l\geq 1} g(m_l)
\\&\leq 
g(1)(\al_0+\bt_0)+2g(m_1)+
\sum_{l\geq 3} g(m_l)\,.
\end{aligned}
\end{equation*}
Hence \eqref{pula} follows by
\eqref{cappella} and \eqref{abbacchio}.
Applying \eqref{pula} with
$g(x) =f(\sqrt{x})$,
we have
\begin{align*}
\sum_i |\al_i-\bt_i| g(\jap i) & \le   
2 g(m_1) + \sum_{l\ge 3} g(\na_l) 
\le 
2 g(31\sum_{l\ge 3} \na_l^2) + \sum_{l\ge 3} g(\na_l)
\\ & 
\le 
62 \sum_{l\ge 3} g(\na_l^2) +  \sum_{l\ge 3} g(\na_l) \le
63 \sum_{l\ge 3} \lambda(\na_l)\,,
\end{align*}
which implies the first inequality in \eqref{constance2SE}.
The second one follows by \eqref{mortazza}.

\smallskip
\noindent
We now show item $(ii)$.  
Notice that
\[
\left|\sqrt{i^{4}+m}-i^2\right|\leq \frac{m}{2i^{2}}\leq 1\,,\quad m\in [1,2]\,,\;\;\; i\in\Z\,.
\]
Then, by triangular inequality, we have
\[
\begin{aligned}
\left|\sum_{i\in \mathbb{Z}}(\al_i-\bt_i) \sqrt{i^{4}+m}\right|
&\geq 
\left|\sum_{i\in \mathbb{Z}}(\al_i-\bt_i) i^{2}\right|-
\left|\sum_{i\in \mathbb{Z}}(\al_i-\bt_i) \big(\sqrt{i^{4}+m}-i^2\big)\right|
\\&\geq
10 \sum_i |\al_i-\bt_i|- \sum_i |\al_i-\bt_i|\geq1\,,
\end{aligned}
\]
which is \eqref{pool1}.
\end{proof}

\begin{lemma}\label{stimaSob:lem}
Fix $\tN\geq1$, $\delta\geq(36\tN)^2$, $\tau\leq 36 \tN^2$ and $\td\geq 4\tN$.
Then one has
\begin{equation}\label{seisettedelta}
\mathtt{J}:=\sup_{ j\in\Z,\, (\al,\bt)\in\mathcal{A}} 
\Big(\frac{\jjap{j}^2}{\prod_{i\in\Z}\jjap{i}^{\al_i + \bt_i}} \Big)^\delta 
\prod_{i\in\Z}\big((1+|\al_i-\bt_i|^2)\langle i\rangle^2\big)^{\tau}\leq 
2^{{\delta}-1}(4^6 e^{27})^{72\tN^2}6^{\delta}\,.
\end{equation}
where $\mathcal{A}\subseteq \Lambda$
is the set of indexes $(\al,\bt)$ such that \eqref{divisor} holds and
\[
|\al| + |\bt| = \tN + 2\,,\qquad 
\al_j+\bt_j\neq 0 \,,\quad |\al - \bt| \le \tN + 2\,.
\]

\end{lemma}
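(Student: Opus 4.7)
The plan is to isolate the two largest components of the rearrangement $\na=\na(\al+\bt)$ from Definition~\ref{n star} and exploit that the hypothesis $\delta\geq(36\tN)^2$ dominates every factor indexed by $l\geq 3$.

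First I would bound the numerator. For each $i$ in the support of $\al-\bt$ one has $|\al_i-\bt_i|\geq 1$, hence
\[
(1+|\al_i-\bt_i|^2)\langle i\rangle^2\leq 2\bigl(|\al_i-\bt_i|\langle i\rangle\bigr)^2\leq 2\bigl(1+|\al_i-\bt_i|\langle i\rangle\bigr)^2.
\]
Raising this to the $\tau$-th power, taking the product over the (at most $\td\leq 4\tN$) non-zero indices, and applying \eqref{constanceSOB} with $N=|\al|+|\bt|=\tN+2$, gives
\[
\prod_i\bigl((1+|\al_i-\bt_i|^2)\langle i\rangle^2\bigr)^\tau\leq 2^{\tau\td}(e^{27}N^6)^{2\tau}\prod_{l=3}^N\na_l^{2\tau\tau_0},
\]
where $\tau_0$ is the absolute constant appearing in \eqref{constanceSOB}.

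Next I would rewrite the ratio $\jjap{j}^2/\prod_i\jjap{i}^{\al_i+\bt_i}$. By definition of $\na$ one has $\prod_i\jjap{i}^{\al_i+\bt_i}=\prod_{l=1}^N\jjap{\na_l}$, and since $\al_j+\bt_j\neq 0$ the value $\jjap{j}$ coincides with one of the $\jjap{\na_l}$; the worst case $\jjap{j}=\jjap{\na_1}$ yields
\[
\Bigl(\frac{\jjap{j}^2}{\prod_i\jjap{i}^{\al_i+\bt_i}}\Bigr)^\delta\leq \frac{\jjap{\na_1}^\delta}{\jjap{\na_2}^\delta\prod_{l=3}^N\jjap{\na_l}^\delta}.
\]
Combining with the numerator estimate and using that $2\tau\tau_0\leq\delta$ (which follows from $\delta\geq(36\tN)^2$, $\tau\leq 36\tN^2$ and the explicit value of $\tau_0$), so that every factor $\na_l^{2\tau\tau_0}$ with $l\geq 3$ is absorbed by $\jjap{\na_l}^\delta\geq\na_l^\delta$, we reduce to
\[
\mathtt{J}\leq\frac{\jjap{\na_1}^\delta}{\jjap{\na_2}^\delta}\cdot 2^{\tau\td}(e^{27}N^6)^{2\tau}.
\]

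The main obstacle is then the last step: bounding the leading ratio $\jjap{\na_1}/\jjap{\na_2}$ by an absolute constant (precisely $6$, so as to produce the $6^\delta$ factor in the statement). Here the divisor condition must be used: momentum conservation \eqref{pi e cappucci}--\eqref{eleganza} gives $\na_1\leq\sum_{l\geq 2}\na_l$, while the condition $|\sum_l\sigma_l\na_l^2|\lesssim N$ coming from \eqref{divisor} forbids $\na_1\gg\na_2$, because a large gap between $\na_1^2$ and $\na_2^2$ cannot be cancelled by the $\na_l^2$'s with $l\geq 3$ (a quantitative version of the argument in Lemma~\ref{luchino} applied with $x_l=\na_l^2$). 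This yields $\jjap{\na_1}\leq 6\,\jjap{\na_2}$. The remaining bookkeeping to match the prefactors $2^{\delta-1}$ and $(4^6 e^{27})^{72\tN^2}$ is then routine, using $\tau\leq 36\tN^2$, $N\leq 2(\tN+1)$ and $\delta\geq (36\tN)^2$.
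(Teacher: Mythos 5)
Your steps (1)--(2) track the paper's opening moves, but the argument breaks at the point you yourself flag as "the main obstacle": the claimed bound $\jjap{\na_1}\le 6\,\jjap{\na_2}$ for $(\al,\bt)\in\mathcal{A}$ is false. The divisor condition \eqref{divisor} only gives $|\sum_i(\al_i-\bt_i)|i|^2|\le 10\sum_i|\al_i-\bt_i|\le 10N$ with $N=\tN+2$, so when the top index has multiplicity one in $\al+\bt$ the best you can extract is $\na_1^2\le 10N+\na_2^2(N-1)$, i.e. $\na_1/\na_2\lesssim\sqrt{N}$ — the slack on the right-hand side grows with $N$, so a large gap between $\na_1^2$ and the remaining squares \emph{can} occur. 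Concretely, take $\tN=100$, $\al=e_{32}$, $\bt=16e_2+85e_0$: momentum is conserved, $|\al|+|\bt|=102=\tN+2$, the pair is non-resonant, and \eqref{divisor} holds since $32^2-16\cdot 4=960\le 10\cdot 102$; yet $\na=(32,2,\dots,2,1,\dots,1)$ gives $\jjap{\na_1}/\jjap{\na_2}=16$. Since the ratio can reach $\sim\sqrt{\tN}$, the factor $(\sqrt{N})^{\delta}=N^{\delta/2}$ with $\delta\ge(36\tN)^2$ eventually overwhelms the prefactor $(4^6e^{27})^{72\tN^2}$, so the strategy cannot be repaired by enlarging constants.

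The structural mistake is in step (3): you absorb the \emph{entire} product $\prod_{l\ge 3}\jjap{\na_l}^{\delta}$ into the numerator's $\na_l$-powers and then try to control $\na_1$ by $\na_2$ alone. But the only thing controlling $\na_1$ is momentum conservation, $\na_1\le\sum_{l\ge 2}\na_l$, and in the example above that sum is large precisely because there are many moderate $\na_l$'s with $l\ge 3$ — which is also why the discarded product $\prod_{l\ge 3}\jjap{\na_l}^{\delta}$ is huge there. The paper's proof keeps this coupling: it spends only half the exponent ($\prod_{l\ge3}\jml{\na_l}^{\delta/2}$) on absorbing the numerator, writes $\jml{\na_1}/\prod_{l\ge2}\jml{\na_l}\le \sum_{l\ge2}\jml{\na_l}/\prod_{l\ge2}\jml{\na_l}$, splits off the $l=2$ term, and controls $\sum_{l\ge3}\jml{\na_l}/\prod_{l\ge3}\jml{\na_l}^{1/2}$ by the sum-versus-product estimate of Lemma \ref{luchino}; this is what produces the absolute constant leading to $6^{\delta}$. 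Note also that in the paper \eqref{divisor} enters only through \eqref{constanceSOB}, not to compare $\na_1$ with $\na_2$.
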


\begin{proof}
By \eqref{constanceSOB}   (recall Definition \ref{n star})
 toghether with $|\al| + |\bt| = \tN + 2$  
and $\td \le 4\tN$ we get
\begin{equation*}
\mathtt{J} \leq 
\sup_{\substack{ 
\\ \al_j+\bt_j\neq 0 
\\ |\al - \bt| \le \tN + 2}} 
\Big(\frac{\jjap{j}^2}{\prod_{i\in\Z}\jjap{i}^{\al_i + \bt_i}} \Big)^\delta 
\pa{e^{27\tau}(\tN + 2)^{6\tau}}^2 \pa{\prod_{\ell = 3}^{\tN + 2} \na_\ell^{\frac{15}{2}\tau}}^2,
\end{equation*}
with $\na=\na(\al+\bt)$.
We claim that
\begin{equation}\label{filomena}
\tN + 2 \leq 4 \prod_{l= 3}^{\tN + 2}\jml{\na_l}^{\frac{1}{4\ln 2}}\,.
\end{equation}
Indeed if $\tN=0$, the inequality is trivial.
The case 
$\tN\geq 1$ follows by 
Lemma \ref{luchino}.
Recalling Def.  \ref{n star} we have
\begin{equation}\label{fiorentina2}
\prod_i\jml{i}^{\al_i+\bt_i}= \prod_{l\ge 1}\jml{\na_l}\,.
\end{equation}
Then
\begin{equation*}
\sup_{\substack {j,\al,\bt
\\ \al_j+\bt_j\geq 1 }} 
\frac{\jml{j}^2}{\prod_i\jml{i}^{\al_i+\bt_i}}
\leq 
\frac{\jml{\hat n_1}^2}{\prod_{l\ge 1}\jml{\na_l}}
=
\frac{\jml{\hat n_1}}{\prod_{l\ge 2}\jml{\na_l}}
\leq
\frac{\sum_{l\ge 2}\jml{\na_l}}{\prod_{l\ge 2}\jml{\na_l}} 
=
\frac{1}{\prod_{l\ge 3}\jml{\na_l}}
+
\frac{\sum_{l\ge 3}\jml{\na_l}}{\prod_{l\ge 2}\jml{\na_l}} \,,
\end{equation*}
where the last inequality holds by momentum conservation.
Recall that $\tau \le 36\tN^2$ and
 $\delta \ge (36\tN)^2$.  
Then, by \eqref{filomena} and by the fact that
$(a+b)^{\delta}\leq 2^{{\delta}-1}(a^{\delta}+b^{\delta})$
for $a,b\geq 0,$ ${\delta}\geq 1$, one has
\begin{equation*}
\begin{aligned}
\mathtt{J}&\leq 
2^{{\delta}-1}
\left(
\frac{1}{\prod_{l\ge 3}\jml{\na_l}^{\delta}}
+\frac{(\sum_{l\ge 3}\jml{\na_l})^{\delta}}{\prod_{l\ge 2}\jml{\na_l}^{\delta}} 
\right)
(4^6 e^{27})^{72\tN^2}\prod_{l\ge 3}\jml{\na_l}^{{\delta/2}}
\\&\leq  
2^{{\delta}-1}(4^6 e^{27})^{72\tN^2}
\left(
1+\frac{(\sum_{l\ge 3}\jml{\na_l})^{\delta}}{
\jml{\na_2}^{\delta}\prod_{l\ge 3}\jml{\na_l}^{{\delta}/2}} 
\right)
\\&\leq  
2^{{\delta}-1}(4^6 e^{27})^{72\tN^2}
\left(
1+\frac{(\jml{\na_3}^{1/2}+4)^{\delta}}{\jml{\na_2}^{\delta}} 
\right)\,,
\end{aligned}
\end{equation*}
where we used Lemma \ref{luchino}. Then the thesis follows.
\end{proof}


\begin{proof}[{\bf Proof of Proposition \ref{crescenza}}]\label{proofofcrescenza}
In all that follows we shall use systematically the fact that our Hamiltonians  
preserve 
the momentum 
and are zero at the origin.
 These facts imply that $\abs{\al} + \abs{\bt} \ge 1$.

\vspace{0.3em}
\noindent
\emph{Case} $\SE)$ 
Let us start by proving the bound \eqref{emiliaparanoica}. 
It follows by \eqref{alberellobello} in Lemma \ref{norme proprieta}
provided that (recall Remark \ref{rmk:basicembHam}) 
\begin{equation*}
\CM :=
\sup_{\substack{j\in\Z,\, \al,\bt\in\N^\Z,\\ \al_j+\bt_j\neq 0,
\\ \sum_{i}i(\al_i-\bt_i)=0}}
\frac{c^{(j)}_{r,\twf}(\al,\bt) }{c^{(j)}_{\ri,\twi}(\al,\bt) }
< \infty\,,
\end{equation*}
where $\tw' = \tw(p, s + \sigma)$ and where $\tw(p, s)$ 
is the weight defined in \eqref{peso sub}.
We actually  show that $C_0$ 
is equal to $1$.
By direct computation
\begin{equation}\label{pool2}
\frac{c^{(j)}_{r,\twf}(\al,\bt) }{c^{(j)}_{\ri,\twi}(\al,\bt) } 
= \exp\Big(-\s\Big(	\sum_i \lambda(\jap{i}) (\al_i+\bt_i) -2\lambda(\jap{j}) \Big) \Big)\,.
\end{equation}
By momentum conservation, inequality \eqref{eleganza} holds, 
which together with the sub-linearity of $\lambda$ and  
the definition of $\na$ gives the following chain of inequalities, 	
for all $\al,\bt$ in $\sum_\ast$ such that $\al_j+\bt_j\ne 0$:
\begin{equation*}
\begin{aligned}
\sum_i \lambda(\jap{i}) (\al_i+\bt_i) -2\lambda(\jap{j}) 
&\ge \sum_i \lambda(\jap{i}) (\al_i+\bt_i) -2\lambda(\na_1)  
\\&\ge 
\sum_{l\ge 1} \lambda(\na_l)  - \lambda(\na_1) -\lambda(\sum_{l\ge 2} \na_l)
\\&\ge 
\sum_{l\ge 2} \lambda(\na_l) - \lambda(\sum_{l\ge 2}\na_l)
\ge c \sum_{l\ge 3} \lambda(\na_l) \ge  0\,,
\end{aligned}
\end{equation*}
where the last inequality follows from \eqref{mortazza}. This concludes the proof.
	
\vspace{0.3em}
\noindent
\emph{Case} $\SO)$ 
In order to prove
the bound \eqref{emiliapara2} we follows the ideas in the proof of 
Proposition $6.3$ in \cite{BMP:CMP}
where the norm $|\cdot|_{r,\mathtt{w}(p)}$ (see Def. \ref{Hreta})
with $\mathtt{w}(p)$ in \eqref{peso sob}
is denoted by $\|\cdot\|_{r,p}$.
Again in view of  \eqref{alberellobello} in Lemma \ref{norme proprieta}
we only have to prove that (recall \eqref{coeffSobo})
\begin{equation}\label{pool22}
\sup_{\substack{j\in\Z,\, \al,\bt\in\N^\Z,\\ \al_j+\bt_j\neq 0,
\\ \sum_{i}i(\al_i-\bt_i)=0}}
\frac{c^{(j)}_{r,\twf}(\al,\bt) }{c^{(j)}_{\ri,\twi}(\al,\bt) } 
=
\sup_{\substack{j\in\Z,\, \al,\bt\in\N^\Z,\\ \al_j+\bt_j\neq 0,
\\ \sum_{i}i(\al_i-\bt_i)=0}}
\left(
\frac{\lfloor j\rfloor^{ 2}}{\prod_{i\in\Z}\lfloor i\rfloor^{\al_i+\bt_i} }
\right)^{p'}
1\,,
\end{equation}
where $\tw' = \tw(p+p')$ and where $\tw(p)$ 
is the weight defined in \eqref{peso sob}.

\noindent
We first show that the inequality holds in the case $j=0,\pm1$. Indeed we have
\[
\prod_{i}\lfloor i\rfloor^{\al_i+\bt_i}\geq \prod_i 2^{\al_i+\bt_i}=2^{\sum_{i}\al_i+\bt_i}\geq 4\,,
\]
since $\sum_{i}\al_i+\bt_i\geq2$.

\noindent
Consider now the case $|j|=\lfloor j\rfloor\geq2$. Since $\al_j+\bt_j\geq1$, the inequality
\eqref{pool22} follows by showing that
\begin{equation}\label{pool23}
\sup_{j,\al,\bt}
\frac{|j|}{\prod_{i\neq j}\lfloor i\rfloor^{\al_i+\bt_i} }\leq 1\,.
\end{equation}
By momentum conservation we have
\[
|j|\leq \sum_{i\neq j}|i|(\al_i+\bt_i)\leq \sum_{i\neq j}\lfloor i\rfloor (\al_i+\bt_i)\,.
\]
Then \eqref{pool23} follows by showing that
\[
\sup_{j,\al,\bt}
\frac{\sum_{i\neq j}\lfloor i\rfloor (\al_i+\bt_i)}{\prod_{i\neq j}\lfloor i\rfloor^{\al_i+\bt_i} }\leq 1\,,
\]
where we can restrict the sum and the product to the indexes $i$ such that
$\al_i+\bt_i\geq1$. The latter bound follows
by the fact that, given $x_k\geq1$,
\[
\frac{\sum_{2\leq k\leq n}k x_k}{\prod_{2\leq k\leq n}k^{x_{k}}}
\leq 1\,,
\]
as one can prove by induction oven 
$n$ and recalling that $n^x\geq nx$ for $n\geq2$, $x\geq1$. 
\end{proof}

\gr{Declarations}. Data sharing not applicable to this article as no datasets were generated or analyzed during the current study.

\noindent
Conflicts of interest: The authors have no conflicts of interest to declare.



\bibliography{biblioBeam}

\begin{thebibliography}{BMP20b}
	
	\bibitem[Bam03]{Bambusi:2003}
	D.~Bambusi.
	\newblock Birkhoff normal form for some nonlinear {PDE}s.
	\newblock {\em Comm. Math. Phys.}, 234(2):253--285, 2003.
	
	\bibitem[BD18]{Berti-Delort}
	M.~Berti and J.M. Delort.
	\newblock {\em {A}lmost global existence of solutions for capillarity-gravity
		water waves equations with periodic spatial boundary conditions}, volume~24
	of {\em Lecture Notes of the Unione Matematica Italiana}.
	\newblock Springer, 2018.
	
	\bibitem[BDGS07]{BDGS}
	D.~Bambusi, J.-M. Delort, B.~Gr\'{e}bert, and J.~Szeftel.
	\newblock Almost global existence for {H}amiltonian semilinear {K}lein-{G}ordon
	equations with small {C}auchy data on {Z}oll manifolds.
	\newblock {\em Comm. Pure Appl. Math.}, 60(11):1665--1690, 2007.
	
	\bibitem[BFF21]{BFF2021}
	M.~Berti, R.~Feola, and L.~Franzoi.
	\newblock Quadratic life span of periodic gravity-capillary water waves.
	\newblock {\em Water Waves}, 3(1):85--115, 2021.
	
	\bibitem[BFGI21]{BFGI2021}
	J.~Bernier, R.~Feola, B.~Gr{\'e}bert, and F.~Iandoli.
	\newblock Long-time existence for semi-linear beam equations on irrational
	tori.
	\newblock {\em Journal of Dynamics and Differential Equations},
	33(3):1363--1398, 2021.
	
	\bibitem[BFN20a]{Bounemoura_Fayad_Niederman_2020}
	A.~Bounemoura, B.~Fayad, and L.~Niederman.
	\newblock {N}ekhoroshev estimates for steep real-analytic elliptic equilibrium
	points.
	\newblock {\em Nonlinearity}, 33(1):1--33, 2020.
	
	\bibitem[BFN20b]{Bounemoura_Fayad_Niederman_2020_2}
	A.~{Bounemoura}, B.~{Fayad}, and L.~{Niederman}.
	\newblock {Super-exponential stability for generic real-analytic elliptic
		equilibrium points}.
	\newblock {\em Advances in Mathematics}, 366:1--30, 2020.
	
	\bibitem[BFP22]{BFP2022}
	M.~Berti, R.~Feola, and F.~Pusateri.
	\newblock Birkhoff normal form and long time existence for periodic gravity
	water waves.
	\newblock {\em Commun. Pure Appl. Math.}, 2022.
	\newblock published online: https://doi.org/10.1002/cpa.22041.
	
	\bibitem[BG06]{Bambusi-Grebert:2006}
	D.~Bambusi and B.~Gr\'ebert.
	\newblock {B}irkhoff normal form for partial differential equations with tame
	modulus.
	\newblock {\em Duke Math. J.}, 135(3):507--567, 2006.
	
	\bibitem[BMM22]{BMM:hal}
	S.~Barbieri, J.-P. Marco, and J.~E. Massetti.
	\newblock Analytic {S}moothing and {N}ekhoroshev {E}stimates for {H}\"{o}lder
	{S}teep {H}amiltonians.
	\newblock {\em Comm. Math. Phys.}, 396(1):349--381, 2022.
	
	\bibitem[BMP19]{BMP:linceiStab}
	L.~Biasco, J.~E. Massetti, and M.~Procesi.
	\newblock Exponential and sub-exponential stability times for the nls on the
	circle.
	\newblock {\em Atti Accad. Naz. Lincei Cl. Sci. Fis. Mat. Natur.},
	30(2):351--364, 2019.
	
	\bibitem[BMP20a]{BMP:CMP}
	L.~Biasco, J.~E. Massetti, and M.~Procesi.
	\newblock An abstract {B}irkhoff normal form theorem and exponential type
	stability of the 1d {NLS}.
	\newblock {\em Comm. Math. Phys.}, 375(3):2089--2153, 2020.
	
	\bibitem[BMP20b]{BMP:linceiAlmost}
	L.~Biasco, J.~E. Massetti, and M.~Procesi.
	\newblock A note on the construction of {S}obolev almost periodic invariant
	tori for the 1d {NLS}.
	\newblock {\em Atti Accad. Naz. Lincei Cl. Sci. Fis. Mat. Natur.},
	31(4):981--993, 2020.
	
	\bibitem[BMP21]{BMP:AHP}
	L.~Biasco, J.~E. Massetti, and M.~Procesi.
	\newblock Almost periodic invariant tori for the {NLS} on the circle.
	\newblock {\em Ann. Inst. H. Poincar\'{e} Anal. Non Lin\'{e}aire},
	38(3):711--758, 2021.
	
	\bibitem[BMP22]{BMP:weak}
	L.~Biasco, J.~E. Massetti, and M.~Procesi.
	\newblock Small amplitude Weak {S}obolev almost periodic solutions for the 1d {N}{L}{S}.
	\newblock To appear on Duke Math. Journal. arXiv:2106.00499, 2022.
	
	\bibitem[Bou05]{Bourgain:2005}
	J.~Bourgain.
	\newblock On invariant tori of full dimension for 1{D} periodic {NLS}.
	\newblock {\em J. Funct. Anal.}, 229(1):62--94, 2005.
	
	\bibitem[{Bou}11]{Bounemoura_2011}
	A.~{Bounemoura}.
	\newblock {Effective Stability for Gevrey and Finitely Differentiable Prevalent
		Hamiltonians}.
	\newblock {\em Comm. Math. Phys.}, 307:157--183, 2011.
	
	\bibitem[CMW20]{Cong}
	H.~Cong, L.~Mi, and P.~Wang.
	\newblock A {N}ekhoroshev type theorem for the derivative nonlinear
	{S}chr\"odinger equation.
	\newblock {\em J. Differential Equations}, 268(9):5207--5256, 2020.
	
	\bibitem[Del09]{Delort:2009vn}
	J.~M. Delort.
	\newblock On long time existence for small solutions of semi-linear
	{K}lein-{G}ordon equations on the torus.
	\newblock {\em Journal d'Analyse Math{\'e}matique}, 107(1):161--194, 2009.
	
	\bibitem[Del12]{Delort-2009}
	J.~M. Delort.
	\newblock A quasi-linear birkhoff normal forms method. {A}pplication to the
	quasi-linear {K}lein-{G}ordon equation on $\mathtt{S}^1$.
	\newblock {\em Ast\'erisque}, 341, 2012.
	
	\bibitem[Del15]{Delort-2015}
	J.-M. Delort.
	\newblock {\em Quasi-linear perturbations of Hamiltonian Klein-Gordon equations
		on spheres}, volume 234(1103).
	\newblock Memoirs of the American Mathematical Society, 2015.
	
	\bibitem[DS04]{DS0}
	J.-M. Delort and J.~Szeftel.
	\newblock Long-time existence for small data nonlinear {K}lein-{G}ordon
	equations on tori and spheres.
	\newblock {\em Int. Math. Res. Not.}, 2004(37):1897--1966, 2004.
	
	\bibitem[DS06]{DS}
	J.-M. Delort and J.~Szeftel.
	\newblock Bounded almost global solutions for non {H}amiltonian semi-linear
	{K}lein-{G}ordon equations with radial data on compact revolution
	hypersurfaces.
	\newblock {\em Ann. Inst. Fourier (Grenoble)}, 56(5):1419--1456, 2006.
	
	\bibitem[Eli02]{Eliasson:cetraro}
	L.~H. Eliasson.
	\newblock Perturbations of linear quasi-periodic system.
	\newblock In {\em Dynamical systems and small divisors ({C}etraro, 1998)},
	volume 1784 of {\em Lecture Notes in Math.}, pages 1--60. Springer, Berlin,
	2002.
	
	\bibitem[FG13]{Faou-Grebert:2013}
	E.~Faou and B.~Gr\'ebert.
	\newblock A {N}ekhoroshev-type theorem for the nonlinear {S}chr\"odinger
	equation on the torus.
	\newblock {\em Anal. PDE}, 6(6):1243--1262, 2013.
	
	\bibitem[FGI20]{FGI20}
	R.~Feola, B.~Gr\'{e}bert, and F.~Iandoli.
	\newblock Long time solutions for quasilinear hamiltonian perturbations of
	schr\"odinger and klein-gordon equations on tori.
	\newblock accepted on Analysis and PDEs, arXiv:2009.07553, 2020.
	
	\bibitem[FI21]{FI}
	R.~Feola and F.~Iandoli.
	\newblock Long time existence for fully nonlinear {NLS} with small {C}auchy
	data on the circle.
	\newblock {\em Ann. Sc. Norm. Super. Pisa Cl. Sci.}, XXII(5):109--182, 2021.
	
	\bibitem[FIM22]{FIM2022}
	R.~Feola, F.~Iandoli, and F.~Murgante.
	\newblock Long-time stability of the quantum hydrodynamic system on irrational
	tori.
	\newblock {\em Mathematics in Engineering}, 4(3):1--24, 2022.
	
	\bibitem[FM22]{FM2022}
	R.~Feola and R.~Montalto.
	\newblock Quadratic lifespan and growth of {S}obolev norms for derivative
	{S}chr{\"o}dinger equations on generic tori.
	\newblock {\em Journal of Differential Equations}, 312:276--316, 2022.
	
	\bibitem[GCB16]{Guzzo_Chierchia_Benettin_2016}
	M.~Guzzo, L.~Chierchia, and G.~Benettin.
	\newblock {T}he {S}teep {N}ekhoroshev's {T}heorem.
	\newblock {\em Comm. Math. Phys.}, 342:569--601, 2016.
	
	\bibitem[HIT16]{HIT2016}
	J.~K. Hunter, M.~Ifrim, and D.~Tataru.
	\newblock Two dimensional water waves in holomorphic coordinates.
	\newblock {\em Comm. Math. Phys.}, 346(2):483--552, 2016.
	
	\bibitem[Ime16]{Imekraz2016}
	R.~Imekraz.
	\newblock Long time existence for the semi-linear beam equation on irrational
	tori of dimension two.
	\newblock {\em Nonlinearity}, 29(10):3067--3102, 2016.
	
	\bibitem[IP19]{ID2019}
	A.~D. Ionescu and F.~Pusateri.
	\newblock Long-time existence for multi-dimensional periodic water waves.
	\newblock {\em Geometric and Functional Analysis}, 29(3):811--870, 2019.
	
	\bibitem[IT17]{IT2017}
	M.~Ifrim and D.~Tataru.
	\newblock The lifespan of small data solutions in two dimensional capillary
	water waves.
	\newblock {\em Archive for Rational Mechanics and Analysis}, 225(3):1279--1346,
	2017.
	
	\bibitem[{Loc}92]{Lochak_1992}
	P.~{Lochak}.
	\newblock {Canonical perturbation theory via simultaneous approximation}.
	\newblock {\em Russian Mathematical Surveys}, 47(6):57--133, 1992.
	
	\bibitem[Nek77]{Nekhoroshev_1977}
	N.~N. Nekhoroshev.
	\newblock {A}n exponential estimate of the time of stability of
	nearly-integrable {H}amiltonian systems.
	\newblock {\em Russian Mathematical Surveys}, 32(6):1--65, 1977.
	
	\bibitem[PS22]{ProStolo}
	Michela Procesi and Laurent Stolovitch.
	\newblock About linearization of infinite-dimensional {H}amiltonian systems.
	\newblock {\em Comm. Math. Phys.}, 394(1):39--72, 2022.
	
	\bibitem[R{\"u}s01]{Russmann01}
	H.~R{\"u}ssmann.
	\newblock Invariant tori in non-degenerate nearly integrable {H}amiltonian
	systems invariant tori in non-degenerate nearly integrable hamiltonian
	systems.
	\newblock {\em Regul. Chaotic Dyn.}, 6(2):119--204, 2001.
	
	\bibitem[YZ14]{Yuan-Zhang}
	Xiaoping Yuan and Jing Zhang.
	\newblock Long time stability of {H}amiltonian partial differential equations.
	\newblock {\em SIAM J. Math. Anal.}, 46(5):3176--3222, 2014.
	
\end{thebibliography}
\bibliographystyle{alpha}

\end{document}